\title{Rainbow vertex connection of digraphs}
\author{
Hui Lei$^1$, Shasha Li$^2$, Henry Liu$^3$\footnote{Corresponding author} , Yongtang Shi$^1$\\
\\
\normalsize $^1$Center for Combinatorics and LPMC\\
\normalsize Nankai University, Tianjin 300071, China\\
\normalsize leihui0711@163.com, shi@nankai.edu.cn\\
\\
\normalsize $^2$Ningbo Institute of Technology\\
\normalsize Zhejiang University, Ningbo 315100, China\\
\normalsize lss@nit.zju.edu.cn\\
\\
\normalsize $^3$School of Mathematics and Statistics\\
\normalsize Central South University, Changsha 410083, China\\
\normalsize henry-liu@csu.edu.cn\\
}
\date{15 July 2017}
\newdimen\unit\newdimen\psep\newcount\nd\newcount\ndx\newbox\dotb\newbox\ptbox
\newdimen\dx\newdimen\dy\newdimen\dxx\newdimen\dyy\newdimen\hgt
\newdimen\xoff\newdimen\yoff
\newcommand\clap[1]{\hbox to 0pt{\hss{#1}\hss}}
\newcommand\vdisk[1]{{\font\dotf=cmr10 scaled #1\dotf.}}
\newcommand\varline[2]{\setbox\dotb\hbox{\vdisk{#1}}\xoff=-.5\wd\dotb
\wd\dotb=0pt\yoff=-.5\ht\dotb\psep=#2\ht\dotb}
\newcommand\varpt[1]{\setbox\ptbox\clap{\vdisk{#1}}\setbox\ptbox
\hbox{\raise-.5\ht\ptbox\box\ptbox}}
\newcommand\cpt{\copy\ptbox}
\newcommand\point[3]{\rlap{\kern#1\unit\raise#2\unit\hbox{#3}}}
\newcommand\setnd[4]{\dx=#3\unit\advance\dx-#1\unit\divide\dx by\psep
\dy=#4\unit\advance\dy-#2\unit\divide\dy by\psep \multiply\dx
by\dx\multiply\dy by\dy\advance\dx\dy\nd=1\advance\dx-1sp
\loop\ifnum\dx>0\advance\dx-\nd sp\advance\nd1\advance\dx-\nd
sp\repeat}
\newcommand\dl[4]{{\setnd{#1}{#2}{#3}{#4}\dline{#1}{#2}{#3}{#4}\nd}}
\newcommand\dline[5]{{\nd=#5\hgt=#2\unit\dx=#3\unit\advance\dx-#1\unit
\divide\dx by\nd\dy=#4\unit\advance\dy-#2\unit\divide\dy by\nd
\advance\hgt\yoff\rlap{\kern#1\unit\kern\xoff\loop\ifnum\nd>1\advance\nd-1
\advance\hgt\dy\kern\dx\raise\hgt\copy\dotb\repeat}}}
\newcommand\ellipse[4]{\qellip{#1}{#2}{#3}{#4}\qellip{#1}{#2}{#3}{-#4}%
\qellip{#1}{#2}{-#3}{#4}\qellip{#1}{#2}{-#3}{-#4}}
\newcommand\qellip[4]{{\setnd{0}{0}{#3}{#4}\dx=\unit\dy=0pt\raise\yoff\rlap{%
\kern#1\unit\kern\xoff\raise#2\unit\hbox{\loop\ifnum\dx>0\rlap{\kern#3\dx
\raise#4\dy\copy\dotb}\hgt=\dx\divide\hgt
by\nd\advance\dy\hgt\hgt=\dy \divide\hgt
by\nd\advance\dx-\hgt\repeat\rlap{\raise#4\dy\copy\dotb}}}}}
\newcommand\bez[6]{{\setnd{#1}{#2}{#3}{#4}\ndx=\nd\setnd{#3}{#4}{#5}{#6}
\ifnum\ndx>\nd\nd=\ndx\fi\dx=#3\unit\advance\dx-#1\unit\dy=#4\unit
\advance\dy-#2\unit\dxx=#5\unit\advance\dxx-#1\unit\dyy=#6\unit\advance
\dyy-#2\unit\advance\dxx-2\dx\advance\dyy-2\dy\divide\dxx
by\nd\divide\dyy
by\nd\advance\dx.25\dxx\advance\dy.25\dyy\divide\dx
by\nd\divide\dy by\nd \multiply\nd
by2\dx=100\dx\dy=100\dy\dxx=100\dxx\dyy=100\dyy\divide\dxx by\nd
\divide\dyy
by\nd\hgt=#2\unit\raise\yoff\rlap{\kern#1\unit\kern\xoff
\raise\hgt\copy\dotb\loop\ifnum\nd>0\advance\nd-1\advance\hgt0.01\dy
\kern0.01\dx\raise\hgt\copy\dotb\advance\dx\dxx\advance\dy\dyy\repeat}}}
\newcommand\ptu[3]{\point{#1}{#2}{\cpt\raise1ex\clap{$\scriptstyle{#3}$}}}
\newcommand\ptd[3]{\point{#1}{#2}{\cpt\raise-1.8ex\clap{$\scriptstyle{#3}$}}}
\newcommand\ptr[3]{\point{#1}{#2}{\cpt\raise-.4ex\rlap{$\ \scriptstyle{#3}$}}}
\newcommand\ptl[3]{\point{#1}{#2}{\cpt\raise-.4ex\llap{$\scriptstyle{#3}\ $}}}
\newcommand\ptlu[3]{\point{#1}{#2}{\raise.8ex\clap{$\scriptstyle{#3}$}}}
\newcommand\ptld[3]{\point{#1}{#2}{\raise-1.6ex\clap{$\scriptstyle{#3}$}}}
\newcommand\ptlr[3]{\point{#1}{#2}{\raise-.4ex\rlap{$\,\scriptstyle{#3}$}}}
\newcommand\ptll[3]{\point{#1}{#2}{\raise-.4ex\llap{$\scriptstyle{#3}\,$}}}
\newcommand\pt[2]{\point{#1}{#2}{\cpt}}
\newcommand\thnline{\varline{400}{.6}}
\newtheorem{theorem}                   {Theorem}%[section]
\newtheorem{thm}             [theorem] {Theorem}
\newtheorem{lem}             [theorem] {Lemma}
\newtheorem{prop}             [theorem] {Proposition}
\newtheorem{claim}           [theorem] {Claim}
\newtheorem{prob}            [theorem] {Problem}
\def\diam{\textup{diam}}
\def\lrG{\overset{\textup{\hspace{0.02cm}\tiny$\leftrightarrow$}}{\phantom{\in}}\hspace{-0.3cm}G}
\def\lrWn{\overset{\textup{\hspace{0.09cm}\tiny$\leftrightarrow$}}{\phantom{\in}}\hspace{-0.34cm}W_n}
\def\rC{\overset{\textup{\hspace{0.05cm}\tiny$\rightarrow$}}{\phantom{\in}}\hspace{-0.32cm}C}
\def\lrP{\overset{\textup{\hspace{0.05cm}\tiny$\leftrightarrow$}}{\phantom{\in}}\hspace{-0.32cm}P}
\def\lrK{\overset{\textup{\hspace{0.08cm}\tiny$\leftrightarrow$}}{\phantom{\in}}\hspace{-0.34cm}K}
\def\lrC{\overset{\textup{\hspace{0.05cm}\tiny$\leftrightarrow$}}{\phantom{\in}}\hspace{-0.32cm}C}
\begin{document}
\maketitle

\begin{abstract}
An edge-coloured path is \emph{rainbow} if its edges have distinct colours. An edge-coloured connected graph is said to be \emph{rainbow connected} if any two vertices are connected by a rainbow path, and \emph{strongly rainbow connected} if any two vertices are connected by a rainbow geodesic. The (\emph{strong}) \emph{rainbow connection number} of a connected graph is the minimum number of colours needed to make the graph (strongly) rainbow connected. These two graph parameters were introduced by Chartrand, Johns, McKeon and Zhang in 2008. As an extension, Krivelevich and Yuster proposed the concept of \emph{rainbow vertex-connection}. The topic of rainbow connection in graphs drew much attention and various similar parameters were introduced, mostly dealing with undirected graphs. 

Dorbec, Schiermeyer, Sidorowicz and Sopena extended the concept of the rainbow connection to digraphs. In this paper, we consider the (strong) rainbow vertex-connection number of digraphs. Results on the (strong) rainbow vertex-connection number of biorientations of graphs, cycle digraphs, circulant digraphs and tournaments are presented.
\\
\\
{\bf Keywords:} Rainbow connection; rainbow vertex-connection; digraphs; tournaments
\end{abstract}

\section{Introduction}
In this paper, we consider graphs and digraphs which are finite and simple. That is, we do not permit the existence of loops, multiple edges (for graphs), and multiple directed arcs (for digraphs). For any undefined terms about graphs and digraphs, we refer the reader to the book of Bollob\'as \cite{BB98}.

The concept of rainbow connection in graphs was introduced by Chartrand et al.~\cite{CJMZ2008}. An edge-coloured path is \emph{rainbow} if its edges have distinct colours. For a connected graph $G$, an edge-colouring is \emph{rainbow connected} if any two vertices of $G$ are connected by a rainbow path. The \emph{rainbow connection number} of $G$, denoted by $rc(G)$, is the smallest possible number of colours in a rainbow connected edge-colouring of $G$. An edge-colouring of $G$ is \emph{strongly rainbow connected} if for any two vertices $u$ and $v$, there exists a rainbow $u-v$ geodesic, i.e., a rainbow $u-v$ path of minimum length. The \emph{strong rainbow connection number} of $G$, denoted by $src(G)$, is the smallest possible number of colours in a strongly rainbow connected edge-colouring of $G$. Clearly, we have diam$(G)\le rc(G)\le src(G)$, where diam$(G)$ is the \emph{diameter} of $G$. As an analogous setting for vertex-colourings, Krivelevich and Yuster \cite{KY2010} proposed the concept of rainbow vertex-connection. A vertex-coloured path is \emph{rainbow} if its internal vertices have distinct colours. A vertex-colouring of $G$ is \emph{rainbow vertex-connected} if any two vertices are connected by a rainbow path. The \emph{rainbow vertex-connection number} of $G$, denoted by $rvc(G)$, is the smallest possible number of colours in a rainbow vertex-connected vertex-colouring of $G$. Likewise, a vertex-colouring of $G$ is \emph{strongly rainbow vertex-connected} if for any two vertices $u$ and $v$, there exists a rainbow $u-v$ geodesic. The \emph{strong rainbow vertex-connection number} of $G$, denoted by $srvc(G)$, is the smallest possible number of colours in a strongly rainbow vertex-connected vertex-colouring of $G$. The parameter $srvc(G)$ was introduced by Li, Mao and Shi \cite{LMS2012}. An easy observation is that $\textup{diam}(G)-1\le rvc(G)\le srvc(G)$, and both equalities hold if diam$(G)=1$ or $2$. For more results on rainbow vertex-connection, we refer to \cite{LS,LiuMSrvc}. It was also shown that computing the rainbow (vertex-)connection number of an arbitrary graph is NP-hard \cite{CLRTY2008, CFMY,CLS2011}. For more results on the rainbow connection and rainbow vertex-connection of graphs, we refer to the survey \cite{LSS2013} and the book \cite{LS2012}.

Recently, Dorbec et al.~\cite{PIE2014} extended the concept of rainbow connection to digraphs. Given a digraph $D$, a \emph{directed path}, or simply a \emph{path} $P$ in $D$, is a sequence of vertices $x_0, x_1,\dots, x_\ell$ in $D$ such that $x_{i-1}x_i$ is an arc of $D$ for every $1\le i\le \ell$. $P$ is also called an \emph{$x_0-x_\ell$ path}, and its \emph{length} is the number of arcs $\ell$. An arc-coloured path is \emph{rainbow} if its arcs have distinct colours. Let $D$ be a strongly connected digraph, i.e., for any ordered pair of vertices $(u,v)$ in $D$, there exists a $u-v$ path. An arc-colouring of $D$ is \emph{rainbow connected} if for any ordered pair of vertices $(u,v)$, there is a rainbow $u-v$ path. The \emph{rainbow connection number} of $D$, denoted by $\overset{\rightarrow}{rc}(D)$, is the smallest possible number of colours in a rainbow connected arc-colouring of $D$. An arc-colouring of $D$ is \emph{strongly rainbow connected} if for any ordered pair of vertices $(u,v)$, there is a rainbow $u-v$ geodesic, i.e., a rainbow $u-v$ path of minimum length. The \emph{strong rainbow connection number} of $D$, denoted by $\overset{\rightarrow}{src}(D)$, is the smallest possible number of colours in a strongly rainbow connected arc-colouring of $D$. The function $\overset{\rightarrow}{src}(D)$ was introduced by Alva-Samos and Montellano-Ballesteros \cite{JJ2015}. We have diam$(D)\le \overset{\rightarrow}{rc}(D)\le\overset{\rightarrow}{src}(D)$. In \cite{JJ2015,JJJ2015}, the authors also studied the (strong) rainbow connection number of some classes of digraphs. 

In this paper, we extend the concept of rainbow vertex-connection to digraphs. A vertex-coloured path in a digraph is \emph{rainbow} if its internal vertices have distinct colours. A vertex-colouring of $D$ is \emph{rainbow vertex-connected} if for any ordered pair of vertices $(u,v)$ in $D$, there is a rainbow $u-v$ path. The \emph{rainbow vertex-connection number} of $D$, denoted by $\overset{\rightarrow}{rvc}(D)$, is the smallest possible number of colours in a rainbow vertex-connected vertex-colouring of $D$. Likewise, a vertex-colouring of $D$ is \emph{strongly rainbow vertex-connected} if for any ordered pair of vertices $(u,v)$, there exists a rainbow $u-v$ geodesic. The \emph{strong rainbow vertex-connection number} of $D$, denoted by $\overset{\rightarrow}{srvc}(D)$, is the smallest possible number of colours in a strongly rainbow vertex-connected vertex-colouring of $D$. 

We shall present some results regarding the functions $\overset{\rightarrow}{rvc}(D)$ and $\overset{\rightarrow}{srvc}(D)$. In Section \ref{gensect}, some basic results for general digraphs are proved. In Section \ref{specsect}, we consider the two parameters for the biorientations of some graphs, cycle digraphs, and circulant digraphs. Finally in Section \ref{tournamentssect}, we shall prove some results for tournaments. Many of our results will be the (strong) rainbow vertex-connection versions of some of the results in \cite{JJ2015,JJJ2015,PIE2014}.

\section{Definitions, remarks and basic results}\label{gensect}

We begin with some definitions about digraphs. For a digraph $D$, its vertex and arc sets are denoted by $V(D)$ and $A(D)$. For $u, v \in V(D)$, the \emph{distance} from $u$ to $v$ (i.e., the length of a shortest $u-v$ path) in $D$ is denoted by $d(u,v)$. If $uv, vu\in A(D)$, then we say that $uv$ and $vu$ are \emph{symmetric arcs}. If $uv\in A(D)$ and $vu\not\in A(D)$, then $uv$ is an \emph{asymmetric arc}. If $uv\in A(D)$ is an arc, then $v$ is an \emph{out-neighbour} of $u$, and $u$ is an \emph{in-neighbour} of $v$. The \emph{out-degree} (resp.~\emph{in-degree}) of $u$ is the number of out-neighbours (resp.~in-neighbours) of $u$. A \emph{tournament} is a digraph where every two vertices have one asymmetric arc joining them. Given a graph $G$, its \emph{biorientation} is the digraph $\lrG$ obtained by replacing each edge $uv$ of $G$ by the pair of symmetric arcs $uv$ and $vu$. 

For $X\subset V(D)$, the subdigraph of $D$ induced by $X$ is denoted by $D[X]$. Let $H$ be another digraph, and $u\in V(D)$. We define $D_{u\to H}$ to be the digraph obtained from $D$ and $H$ by replacing the vertex $u$ with a copy of $H$, and replacing each arc $xu$ (resp. $ux$) in $D$ by all the arcs $xv$ (resp.~$vx$) for $v\in V(H)$. We say that $D_{u\to H}$ is \emph{obtained from $D$ by expanding $u$ to $H$}. Note that the digraph obtained from $D$ by expanding every vertex to $H$ is also known as the \emph{lexicographic product} $D\circ H$.

Let $K_n$ and $P_n$ denote the complete graph and the simple path of order $n$, respectively. For $n\ge 3$, let $C_n$ and $\rC_n$ denote the cycle and directed cycle of order $n$, i.e., we may let $V(\rC_n)=\{v_0,\dots,v_{n-1}\}$ and $A(\rC_n)=\{v_0v_1,v_1v_2,\dots,v_{n-2}v_{n-1},v_{n-1}v_0\}$. For a directed cycle $\rC$ and $u,v\in V(\rC)$, we write $u\rC v$ for the $u-v$ path which uses the arcs of $\rC$.

Now, we shall present some remarks and basic results for the rainbow vertex-connection and strong rainbow vertex-connection numbers, for general digraphs. We first note that in a rainbow vertex-connected colouring of a strongly connected digraph $D$, there must be a path between some two vertices with at least diam$(D)-1$ colours. Thus, we have the following proposition.
\begin{prop}\label{pro1}
Let $D$ be a strongly connected digraph of order $n$ and let $\textup{diam}(D)$ be the diameter of $D$. Then 

\begin{equation}
\textup{diam}(D)-1 \le \overset{\rightarrow}{rvc}(D)\le \overset{\rightarrow}{srvc}(D)\le n.\label{pro1eq}
\end{equation}
\end{prop}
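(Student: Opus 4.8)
The plan is to establish the three inequalities in~\eqref{pro1eq} separately, working from the definitions of $\overset{\rightarrow}{rvc}$ and $\overset{\rightarrow}{srvc}$.

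For the lower bound $\textup{diam}(D)-1 \le \overset{\rightarrow}{rvc}(D)$, I would argue as follows. Let $(u,v)$ be an ordered pair of vertices realising the diameter, so that $d(u,v)=\textup{diam}(D)$. In any rainbow vertex-connected colouring, there must be a rainbow $u-v$ path $P$; since $P$ has length at least $d(u,v)=\textup{diam}(D)$, it has at least $\textup{diam}(D)-1$ internal vertices. As $P$ is rainbow, these internal vertices all receive distinct colours, so the colouring uses at least $\textup{diam}(D)-1$ colours. Hence $\overset{\rightarrow}{rvc}(D)\ge \textup{diam}(D)-1$.

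The middle inequality $\overset{\rightarrow}{rvc}(D)\le \overset{\rightarrow}{srvc}(D)$ is immediate from the definitions: any strongly rainbow vertex-connected colouring provides, for each ordered pair $(u,v)$, a rainbow $u-v$ geodesic, which is in particular a rainbow $u-v$ path. Thus every strongly rainbow vertex-connected colouring is rainbow vertex-connected, and so the minimum number of colours for the former is at least that for the latter. For the upper bound $\overset{\rightarrow}{srvc}(D)\le n$, I would simply colour all $n$ vertices with distinct colours. Then for any ordered pair $(u,v)$, a $u-v$ geodesic exists by strong connectivity, and since all vertices have distinct colours, every path---in particular this geodesic---is automatically rainbow. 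Hence this colouring is strongly rainbow vertex-connected, giving $\overset{\rightarrow}{srvc}(D)\le n$.

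None of the three steps presents a genuine obstacle; the proposition is essentially a matter of unwinding the definitions and applying strong connectivity to guarantee the existence of the relevant paths and geodesics. The only point requiring slight care is the lower bound, where one must correctly count the internal vertices of a path of length at least $\textup{diam}(D)$ and invoke the rainbow property to force distinct colours on them; this is precisely the observation flagged in the remark preceding the statement.
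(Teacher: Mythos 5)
Your proof is correct and follows essentially the same route as the paper, which simply remarks that any rainbow vertex-connected colouring must contain a path with at least $\textup{diam}(D)-1$ distinctly coloured internal vertices and treats the remaining two inequalities as immediate from the definitions. Your write-up just makes explicit the definitional steps (every strongly rainbow vertex-connected colouring is rainbow vertex-connected; colouring all $n$ vertices distinctly is strongly rainbow vertex-connected) that the paper leaves unstated.
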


It is easy to see that the bioriented paths $\lrP_n$, for $n\ge 2$, form an infinite family of graphs where we have equalities in the first two inequalities in (\ref{pro1eq}). Also, it is not difficult to see that for every directed cycle $\rC_n$ with $n\ge 5$, we have equalities in the last two inequalities in (\ref{pro1eq}). These two results will be included in Theorem \ref{biorthm} and Proposition \ref{cyclepro}.

\begin{thm}\label{pro2}
Let $D$ be a non-trivial, strongly connected digraph.
\begin{enumerate}
\item[(a)] The following are equivalent.
\begin{enumerate}
\item[(i)] $D=\lrK_n$ for some $n\ge 2$.
\item[(ii)] \textup{diam}$(D)=1$.
\item[(iii)] $\overset{\rightarrow}{srvc}(D)=0$.
\item[(iv)] $\overset{\rightarrow}{rvc}(D)=0$.
\item[(v)]$\overset{\rightarrow}{src}(D)=1$.
\item[(vi)] $\overset{\rightarrow}{rc}(D)=1$.
\end{enumerate}
\item[(b)] 
\begin{enumerate}
\item[(i)] $\overset{\rightarrow}{srvc}(D) = 1$, if and only if $\overset{\rightarrow}{rvc}(D) = 1$, if and only if \textup{diam}$(D)=2$. 
\item[(ii)] $\overset{\rightarrow}{srvc}(D) = 2$ if and only if $\overset{\rightarrow}{rvc}(D) = 2$.
\item[(iii)] $\overset{\rightarrow}{src}(D)=2$ if and only if $\overset{\rightarrow}{rc}(D)=2$.
\end{enumerate}
Moreover, either of the conditions of (iii) implies any of the conditions of (i).
\end{enumerate}
\end{thm}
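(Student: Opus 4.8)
The plan is to prove the chain of equivalences in part (a) as a cycle of implications, then establish the three biconditionals in part (b), and finally the concluding ``moreover'' statement. For part (a), I would first note that (i) $\Leftrightarrow$ (ii) is essentially a definition: a strongly connected digraph has diameter $1$ precisely when every ordered pair of vertices is joined by an arc, which is exactly $\lrK_n$. Next I would handle the vertex-colouring equivalences (ii) $\Leftrightarrow$ (iii) $\Leftrightarrow$ (iv). If $\diam(D)=1$, then between any ordered pair $(u,v)$ the arc $uv$ is a path with no internal vertices, so the empty colouring (zero colours) is trivially strongly rainbow vertex-connected, giving (iii); and (iii) $\Rightarrow$ (iv) is immediate from $\overset{\rightarrow}{rvc}(D)\le\overset{\rightarrow}{srvc}(D)$ in Proposition~\ref{pro1}. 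For (iv) $\Rightarrow$ (ii), I would argue contrapositively: if $\diam(D)\ge 2$, some shortest $u-v$ path has an internal vertex, forcing at least one colour, so $\overset{\rightarrow}{rvc}(D)\ge 1$. The arc-colouring equivalences (ii) $\Leftrightarrow$ (v) $\Leftrightarrow$ (vi) follow the same pattern but with the shift by one: $\diam(D)=1$ means every pair is joined by a single arc, so one colour suffices for a (strong) rainbow geodesic, and conversely $\diam(D)\ge 2$ forces a path of length $\ge 2$ hence at least two colours on some geodesic.

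For part (b)(i), I would show $\diam(D)=2$ implies $\overset{\rightarrow}{srvc}(D)=1$: colouring every vertex with a single colour makes any shortest path (of length at most $2$, hence at most one internal vertex) vacuously rainbow, so one colour works; and by part (a) we cannot have $0$ colours unless $\diam(D)=1$, so exactly one colour is needed. The reverse direction $\overset{\rightarrow}{rvc}(D)=1\Rightarrow\diam(D)=2$ uses part (a) to exclude $\diam(D)=1$, and a diameter of at least $3$ would force a shortest path with at least two internal vertices needing distinct colours, contradicting $\overset{\rightarrow}{rvc}(D)=1$. The sandwiching $\overset{\rightarrow}{rvc}(D)\le\overset{\rightarrow}{srvc}(D)$ then collapses the three conditions together. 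For (b)(ii) and (b)(iii), the nontrivial content is that the rainbow and strong rainbow parameters coincide at the value $2$; these should follow because having exactly two available colours leaves little room for a rainbow path to differ from a geodesic in the relevant short-distance regime, so I would argue that a rainbow vertex-connected (resp.\ rainbow connected) colouring with two colours is automatically strong.

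The main obstacle will be the two ``$\le 2$'' biconditionals in (b)(ii) and (b)(iii) and the final implication. For (b)(ii), the forward direction $\overset{\rightarrow}{srvc}=2\Rightarrow\overset{\rightarrow}{rvc}=2$ is clear from monotonicity, but the reverse requires showing that a rainbow vertex-colouring using two colours can be upgraded to a strong one without extra colours; I expect to analyse the structure of digraphs admitting $\overset{\rightarrow}{rvc}(D)=2$ and verify that rainbow $u-v$ paths can be replaced by geodesics that remain rainbow, perhaps by bounding the relevant distances and checking that the two colour classes can be arranged along every shortest path. The analogous arc-version (b)(iii) should be treatable by the same reasoning applied to arcs instead of internal vertices. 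For the concluding statement, I would verify that each of $\overset{\rightarrow}{src}(D)=2$ and $\overset{\rightarrow}{rc}(D)=2$ forces $\diam(D)=2$: the value $2$ for an arc-parameter rules out $\diam(D)=1$ by part (a)(v)--(vi), and a diameter of at least $3$ would demand a geodesic of length $\ge 3$ with three distinctly coloured arcs, exceeding the budget of two colours; hence $\diam(D)=2$, which by (b)(i) yields all the conditions of (i). The only genuinely delicate point is ensuring the upgrade from rainbow to strongly rainbow at value $2$ does not secretly require a third colour, and I would guard against this by a careful case analysis of short geodesics.
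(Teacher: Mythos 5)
Your overall skeleton matches the paper's proof: bootstrap the small values through the chain $\textup{diam}(D)-1\le\overset{\rightarrow}{rvc}(D)\le\overset{\rightarrow}{srvc}(D)$ from Proposition \ref{pro1}, prove (a) as a cycle of implications, collapse the three conditions of (b)(i) by sandwiching, and get the ``moreover'' part from $\textup{diam}(D)\le\overset{\rightarrow}{rc}(D)$ together with (a). The only structural difference is that you prove the arc-colouring statements (a)(v),(vi) and (b)(iii) directly, where the paper simply cites Theorem 2.2 of Alva-Samos and Montellano-Ballesteros; your direct arguments there are sound, and this self-contained route is perfectly fine.

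The genuine gap is in the reverse direction of (b)(ii), $\overset{\rightarrow}{rvc}(D)=2\Rightarrow\overset{\rightarrow}{srvc}(D)=2$, which is the one non-trivial step of the theorem: you leave it as a plan, and the plan is phrased in a way that points away from the actual argument. No ``upgrade'' of the colouring, no replacement of rainbow paths by geodesics, and no rearrangement of colour classes along shortest paths is needed, nor any structural analysis of digraphs with $\overset{\rightarrow}{rvc}(D)=2$. The paper's argument is: fix any rainbow vertex-connected colouring with $2$ colours; by Proposition \ref{pro1}, $\textup{diam}(D)\le 3$. If $d(u,v)\in\{1,2\}$, every $u-v$ geodesic has at most one internal vertex and is automatically rainbow. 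If $d(u,v)=3$, then with only two colours no path of length at least $4$ can be rainbow (it would have at least three internal vertices), so the rainbow $u-v$ path guaranteed by the colouring has length exactly $3$ and is therefore itself a geodesic. Hence the very same colouring is strongly rainbow vertex-connected, giving $\overset{\rightarrow}{srvc}(D)\le 2$, and equality follows since $\overset{\rightarrow}{srvc}(D)\ge\overset{\rightarrow}{rvc}(D)=2$. Two smaller points: your forward direction of (b)(ii) needs more than ``monotonicity'' --- you must invoke (a) and (b)(i) to exclude $\overset{\rightarrow}{rvc}(D)\in\{0,1\}$; and in the ``moreover'' part the cleanest route is simply $\textup{diam}(D)\le\overset{\rightarrow}{rc}(D)=2$ rather than an argument about geodesics, since $\overset{\rightarrow}{rc}$ makes no reference to geodesics.
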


\begin{proof}
(a) In \cite{JJ2015}, Theorem 2.2(a), it was proved that (i), (v) and (vi) are equivalent. Clearly, we have (ii) $\Rightarrow$ (i) $\Rightarrow$ (iii). If (iii) holds, then by (\ref{pro1eq}), we have $0\le \overset{\rightarrow}{rvc}(D)\le \overset{\rightarrow}{srvc}(D)=0$, so that (iv) holds. Finally, again by (\ref{pro1eq}), we clearly have (iv) $\Rightarrow$ (ii).\\[1ex]
\indent (b)(i) If $\overset{\rightarrow}{srvc}(D) = 1$, then by (a) and (\ref{pro1eq}), we have $1\le \overset{\rightarrow}{rvc}(D)\le\overset{\rightarrow}{srvc}(D)=1$, and hence $\overset{\rightarrow}{rvc}(D) = 1$. Next, suppose that $\overset{\rightarrow}{rvc}(D) = 1$. Then (\ref{pro1eq}) implies diam$(D)\le 2$, and (a) implies diam$(D)\neq 1$, so that diam$(D)=2$. Now, suppose that diam$(D)=2$. Then the vertex-colouring of $D$ where all vertices have the same colour is strongly rainbow vertex-connected. Indeed, for any $u,v\in V(D)$, either $uv\in A(D)$, or $uv\not\in A(D)$ and there is a $u-v$ path of length $2$, which is also a rainbow $u-v$ geodesic. Thus, $\overset{\rightarrow}{srvc}(D) \le 1$. Also, (\ref{pro1eq}) implies $\overset{\rightarrow}{srvc}(D) \ge 1$, and hence, we have $\overset{\rightarrow}{srvc}(D)=1$.\\[1ex]
\indent (b)(ii) Suppose first that $\overset{\rightarrow}{srvc}(D) = 2$. Then $\overset{\rightarrow}{rvc}(D)\le 2$ by (\ref{pro1eq}). Clearly $\overset{\rightarrow}{rvc}(D) \neq 0$ by (a), and $\overset{\rightarrow}{rvc}(D) \neq 1$ by (b)(i). Thus $\overset{\rightarrow}{rvc}(D) = 2$. Conversely, suppose that $\overset{\rightarrow}{rvc}(D) = 2$. Then by (\ref{pro1eq}), we have $\overset{\rightarrow}{srvc}(D) \ge 2$ and $\textup{diam}(D)\le 3$. We may take a rainbow vertex-connected colouring of $D$, using $\overset{\rightarrow}{rvc}(D) = 2$ colours. Let $u,v \in V(D)$. If $d(u,v)\in\{1,2\}$, then any $u-v$ geodesic is clearly rainbow. If $d(u, v) = 3$, then since any $u-v$ path of length at least 4 cannot be rainbow, there must exist a rainbow $u-v$ path of length 3, which is also a $u-v$ geodesic. Thus, the vertex-colouring is also strongly rainbow vertex-connected. We have $\overset{\rightarrow}{srvc}(D)\le 2$, so that $\overset{\rightarrow}{srvc}(D) = 2$.\\[1ex]
\indent Finally, (b)(iii) was proved in \cite{JJ2015}, Theorem 2.2(b). To see the ``moreover'' part, suppose that  $\overset{\rightarrow}{rc}(D)=2$. Then diam$(D)\le\overset{\rightarrow}{rc}(D)=2$, and again (a) implies diam$(D)=2$.
\end{proof}

We remark that in Theorem \ref{pro2}(b), no other implication exists between the conditions of (i) to (iii). Obviously, the conditions of (i) and those of (ii) are mutually exclusive. Thus by the ``moreover'' part, the conditions of (ii) and those of (iii) are also mutually exclusive. Now, there are infinitely many examples of digraphs $D$ where the conditions of (i) hold, but those of (iii) do not hold. For example, let $u$ be a vertex of the directed cycle $\rC_3$, and let $D_n$ be the digraph on $n\ge 3$ vertices, obtained from $\rC_3$ by expanding $u$ to $\lrK_{n-2}$. Then, we have $\overset{\rightarrow}{srvc}(D_n)=\overset{\rightarrow}{rvc}(D_n)=1$ and diam$(D_n)=2$, but $\overset{\rightarrow}{src}(D_n)=\overset{\rightarrow}{rc}(D_n)=3$.

Alva-Samos and Montellano-Ballesteros \cite{JJ2015} remarked that for a connected graph $G$,
\begin{equation}
\overset{\rightarrow}{rc}(\lrG)\le rc(G)\quad\textup{and}\quad\overset{\rightarrow}{src}(\lrG)\le src(G).\label{ASMBrmk}
\end{equation}
Furthermore, for each inequality, there are infinitely many graphs where equality holds, and also with the difference between the two parameters arbitrarily large. For example, for $n\ge 4$, we have $rc(C_n)=\overset{\rightarrow}{rc}(\lrC_n)=src(C_n)=\overset{\rightarrow}{src}(\lrC_n)=\lceil\frac{n}{2}\rceil$. Also, for $n\ge 2$, we have $\overset{\rightarrow}{rc}(\lrK_{1,n})=\overset{\rightarrow}{src}(\lrK_{1,n})=2$ and $rc(K_{1,n})=src(K_{1,n})=n$, where $K_{1,n}$ is the star with $n$ edges. However, for rainbow vertex-connection, we have the following proposition.

\begin{prop}\label{pro3}
For a connected graph $G$, we have $rvc(G)=\overset{\rightarrow}{rvc}(\lrG)$ and $srvc(G)=\overset{\rightarrow}{srvc}(\lrG)$.
\end{prop}

\begin{proof}
First, observe that for vertices $u,v\in V(G)=V(\lrG)$, a $u-v$ geodesic in $G$ corresponds to $u-v$ and $v-u$ geodesics in $\lrG$. The proposition follows since a (strongly) rainbow vertex-connected colouring of $G$ is also a (strongly) rainbow vertex-connected colouring of $\lrG$, and conversely.
\end{proof}

Next, Alva-Samos and Montellano-Ballesteros \cite{JJ2015} remarked that, if $D$ and $H$ are strongly connected digraphs such that $H$ is a spanning subdigraph of $D$, then $\overset{\rightarrow}{rc}(D)\le \overset{\rightarrow}{rc}(H)$. The analogous statement for the rainbow vertex-connection number also clearly holds.

\begin{prop}\label{pro4}
Let $D$ and $H$ be strongly connected digraphs such that, $H$ is a spanning subdigraph of $D$. Then $\overset{\rightarrow}{rvc}(D)\le \overset{\rightarrow}{rvc}(H)$.
\end{prop}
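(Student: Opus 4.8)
The plan is to establish the inequality $\overset{\rightarrow}{rvc}(D)\le \overset{\rightarrow}{rvc}(H)$ by showing that any rainbow vertex-connected colouring of $H$ is automatically a rainbow vertex-connected colouring of $D$. First I would take an optimal rainbow vertex-connected vertex-colouring $c$ of $H$, using exactly $\overset{\rightarrow}{rvc}(H)$ colours. Since $H$ is a spanning subdigraph of $D$, we have $V(H)=V(D)$, so $c$ is also a vertex-colouring of $D$ on the same vertex set, using the same number of colours.

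Next I would verify that $c$ is rainbow vertex-connected \emph{as a colouring of $D$}. The key observation is that $A(H)\subseteq A(D)$, so every directed path in $H$ is also a directed path in $D$. Fix any ordered pair $(u,v)$ of vertices of $D$. Because $c$ is rainbow vertex-connected on $H$, there is a rainbow $u-v$ path $P$ in $H$, i.e.\ a $u-v$ path whose internal vertices receive distinct colours under $c$. This same sequence of vertices is a $u-v$ path in $D$ (its arcs all lie in $A(H)\subseteq A(D)$), and whether it is rainbow depends only on the colours of its internal vertices, which are unchanged. Hence $P$ is a rainbow $u-v$ path in $D$ as well.

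Since the pair $(u,v)$ was arbitrary, $c$ is a rainbow vertex-connected colouring of $D$ using $\overset{\rightarrow}{rvc}(H)$ colours, and therefore $\overset{\rightarrow}{rvc}(D)\le \overset{\rightarrow}{rvc}(H)$ by the minimality in the definition of $\overset{\rightarrow}{rvc}(D)$.

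This argument is essentially routine, so I do not anticipate a genuine obstacle; the only point requiring a moment of care is confirming that the property of being rainbow is monotone under enlarging the arc set while keeping the vertex set and colouring fixed. This holds precisely because rainbowness of a path is a condition on the colours of internal vertices alone, and passing from $H$ to $D$ adds arcs but changes neither $V(H)$, the colouring $c$, nor the internal-vertex colours along any particular path. The same reasoning, incidentally, does \emph{not} transfer to the strong version, since a shortest $u-v$ path in $D$ may be strictly shorter than any $u-v$ path available in $H$; this is why only the non-strong inequality is asserted.
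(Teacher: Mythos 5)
Your proof is correct and is precisely the routine argument the paper has in mind (the paper states Proposition \ref{pro4} without proof, remarking only that it ``clearly holds''): an optimal rainbow vertex-connected colouring of $H$ remains rainbow vertex-connected on $D$, since $V(H)=V(D)$, $A(H)\subseteq A(D)$, and rainbowness of a path depends only on the colours of its internal vertices. Your closing remark about why the argument does not transfer to $\overset{\rightarrow}{srvc}$ is also accurate and matches the paper's Lemma \ref{lem5}.
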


Alva-Samos and Montellano-Ballesteros also showed in \cite{JJ2015}, Lemma 1, that the same is not true for the strong rainbow connection number, when they presented an example of such digraphs $D$ and $H$ with $\overset{\rightarrow}{src}(D)>\overset{\rightarrow}{src}(H)$. However, in their example, the arc-colouring of the digraph $H$ is \emph{not} strongly rainbow connected. In the next lemma, we will rectify this problem, and also show the analogous result for the strong rainbow vertex-connection number.

%However, this is not true for the strong rainbow vertex-connection number, as we will see in the next lemma.

\begin{lem}\label{lem5}
There are strongly connected digraphs $D$ and $H$ such that, $H$ is a spanning subdigraph of $D$, and $\overset{\rightarrow}{src}(D)>\overset{\rightarrow}{src}(H)$. A similar statement holds for the function $\overset{\rightarrow}{srvc}$.
\end{lem}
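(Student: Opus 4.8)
The plan is to construct explicit digraphs $D$ and $H$ witnessing both strict inequalities simultaneously, rather than treating the two parameters separately. The key idea is to exploit the phenomenon that passing to a spanning subdigraph $H$ of $D$ can \emph{create} new short paths (geodesics) that were not geodesics in $D$, because distances can only increase when arcs are removed. In a strongly rainbow connected (or vertex-connected) colouring, one must rainbow-colour \emph{every} geodesic between every ordered pair; if $D$ has more geodesics, or longer geodesics, than $H$, then $D$ may genuinely require more colours. So first I would look for a small digraph $D$ that has two vertices $u,v$ with a unique long $u\to v$ geodesic forcing many distinct colours, together with a spanning subdigraph $H$ in which the distance from $u$ to $v$ drops (or the geodesic structure simplifies) so that fewer colours suffice.

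Concretely, I would take $H$ to be (a biorientation of) a short structure such as a directed or bioriented cycle, or a small digraph on which $\overset{\rightarrow}{srvc}$ and $\overset{\rightarrow}{src}$ are easy to compute exactly, and then add arcs to form $D$ in a way that \emph{lengthens} the relevant geodesics. The critical subtlety — and the point where the authors say the earlier example in \cite{JJ2015} was flawed — is that the exhibited colouring of $H$ must itself be verified to be \emph{strongly} rainbow connected (resp.\ vertex-connected), not merely rainbow connected. Thus the main work is twofold: (1) exhibit an explicit colouring of $H$ with few colours and check that between \emph{every} ordered pair $(x,y)$ there is a rainbow $x\to y$ \emph{geodesic}; and (2) prove a matching \emph{lower} bound for $\overset{\rightarrow}{src}(D)$ (resp.\ $\overset{\rightarrow}{srvc}(D)$), showing $D$ cannot be strongly rainbow coloured with as few colours. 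Step (2) is the genuine obstacle, since lower bounds on strong rainbow parameters require arguing that some specific geodesic in $D$ is forced and must use many colours no matter how one colours — typically by identifying an ordered pair whose \emph{only} shortest path is long, so that every one of its internal arcs/vertices needs a distinct colour.

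For the arc version, I would aim for the cleanest possible instance: pick $H$ and $D$ so that $\overset{\rightarrow}{src}(H)$ equals the number of arcs on the longest forced geodesic in $H$, while in $D$ an extra arc both destroys a short geodesic and introduces a longer forced one, driving $\overset{\rightarrow}{src}(D)$ strictly above $\overset{\rightarrow}{src}(H)$. For the vertex version, the analogous construction uses the fact that $\overset{\rightarrow}{srvc}$ counts internal vertices on geodesics, via Proposition~\ref{pro1} and Theorem~\ref{pro2}; here the bioriented-graph dictionary of Proposition~\ref{pro3} may let me import a known undirected separation $srvc(G)$ vs.\ a subgraph, or I may build a direct digraph example. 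In either case I would present the colouring of $H$ as an explicit table of arc/vertex colours and verify strong rainbow connectivity case by case over the (few) ordered pairs, then finish with the short forcing argument for the $D$ lower bound.

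I expect the main obstacle to be choosing the example small enough that the strong-rainbow verification for $H$ is genuinely complete and transparent — since this completeness is exactly what the prior example lacked — while still large enough that the distance increase in $D$ forces the strict gap. Balancing these two competing demands, and writing the $H$-verification so that no ordered pair's geodesic is overlooked, is the delicate part.
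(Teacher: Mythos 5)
Your overall template (exhibit explicit $D$ and $H$, verify a strongly rainbow colouring of $H$ in full, then prove a lower bound for $D$) matches the paper's, but the mechanism you propose to drive the separation is backwards, and this is fatal. You ask for ``a spanning subdigraph $H$ in which the distance from $u$ to $v$ drops'', and later for ``an extra arc [that] destroys a short geodesic and introduces a longer forced one''. Both are impossible: deleting arcs can only (weakly) increase distances and adding arcs can only (weakly) decrease them, so $d_H(u,v)\ge d_D(u,v)$ for every ordered pair, and no forced geodesic of $D$ is ever longer than the corresponding one of $H$. The phenomenon the paper actually exploits is the opposite: the extra arc of $D$ creates new, \emph{shorter} unique geodesics, and these impose colouring constraints that simply do not exist in $H$, where the same pairs are joined by longer geodesics or by several alternative ones.

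Second, your lower-bound strategy --- ``identifying an ordered pair whose only shortest path is long, so that every one of its internal arcs/vertices needs a distinct colour'' --- cannot succeed even in principle. Such an argument yields only $\overset{\rightarrow}{src}(D)\ge d_D(u,v)$, and since $d_D(u,v)\le\diam(D)\le\diam(H)\le\overset{\rightarrow}{src}(H)$, a bound obtained this way can never exceed $\overset{\rightarrow}{src}(H)$ (the same obstruction holds for $\overset{\rightarrow}{srvc}$, with internal vertices in place of arcs). The paper's lower bound is necessarily an interaction argument across \emph{several} pairs: in $D_1$ the four pairwise non-incident arcs $u_iv_i$ are forced, by four different pairs' geodesics, to receive four distinct colours, and then the unique $v_1$--$u_3$ geodesic $v_1xyu_3$ forces three further arcs to carry colours distinct from those four, giving $\overset{\rightarrow}{src}(D_1)\ge 7>6=\overset{\rightarrow}{src}(H_1)$; the $\overset{\rightarrow}{srvc}$ case is analogous, with cut-vertices playing the role of the arcs $u_iv_i$. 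Without such a multi-pair forcing argument --- and without any explicit digraphs, which your proposal never supplies --- the plan cannot be completed. A minor further slip: a strongly rainbow connected colouring need not make \emph{every} geodesic rainbow, only \emph{some} geodesic for each ordered pair; your later phrasing has this right, but it means every lower-bound step must go through uniqueness (or exhaustion) of geodesics, exactly as the paper does.
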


\begin{proof}
Let $H_1$ (resp.~$H_2$) be the digraph consisting of the solid arcs as in Figure 1(a) (resp.~(b)), and $D_1$ (resp.~$D_2$) be the digraph obtained by adding the dotted arc.

It is not hard to see that the arc-colouring for $H_1$ in Figure 1(a) is strongly rainbow connected, where for the eight pairs of symmetric arcs in the middle, the two arcs in each pair have the same colour. Thus $\overset{\rightarrow}{src}(H_1)\le 6$. In fact, we have $\overset{\rightarrow}{src}(H_1)=6$, since $\overset{\rightarrow}{src}(H_1)\ge\diam(H_1)=6$. Now, we will show that $\overset{\rightarrow}{src}(D_1)\geq 7$. Suppose that we have a strongly rainbow connected colouring $c$ of $D_1$, using at most six colours, say colours $1,2,3,4,5,6$. For $1 \le i \le 4$, we see that the arcs $u_iv_i$ must have distinct colours. Otherwise we can find two vertices such that any geodesic in $D_1$ connecting them is not rainbow. Hence, we may assume that $c(u_iv_i) = i$, for $1 \le i \le 4$. For the same reason, the arcs $v_1x,xy$ and $yu_3$ must each have a colour different from $1,2,3,4$, and they must also have distinct colours, since $v_1xyu_3$ is the unique $v_1-u_3$ geodesic in $D_1$. However, only the colours $5$ and $6$ are available for these three arcs, a contradiction. Hence $\overset{\rightarrow}{src}(D_1)\geq 7$, and we have $\overset{\rightarrow}{src}(D_1)>\overset{\rightarrow}{src}(H_1)$.

We may apply a fairly similar argument for the digraphs $D_2$ and $H_2$. Again, we can easily check that the vertex-colouring for $H_2$ in Figure 1(b) is strongly rainbow vertex-connected, and that $\diam(H_2)=9$. Thus $\overset{\rightarrow}{srvc}(H_2)=8$. Now suppose that there is a strongly rainbow vertex-connected colouring of $D_2$, using colours $1,2,\dots,8$. Notice that any two cut-vertices must receive distinct colours, and thus without loss of generality, we may assume that $w_1,w_2,w_3,w_4,x,y$ have colours $1,2,3,4,5,6$. Next, we see that $v_1,u_3$ and $z$ must each have a colour different from $1,2,\dots,6$, otherwise we can find two vertices such that any geodesic in $D_2$ connecting them is not rainbow. Moreover, $v_1,u_3$ and $z$ must have distinct colours, since $u_1v_1w_1xzyw_3u_3v_3$ is the unique $u_1-v_3$ geodesic in $D_2$. However, only the colours $7$ and $8$ are available for these three vertices, a contradiction. Hence $\overset{\rightarrow}{srvc}(D_2)\geq 9>8=\overset{\rightarrow}{srvc}(H_2)$.\\[2ex]
\[ \unit = 0.6cm
%vertices
\pt{0}{0.8}\pt{0}{-0.8}\pt{0}{2.4}\pt{0}{-2.4}\pt{3.9}{0}\pt{-3.9}{0}
\pt{4.9}{2.4}\pt{6.3}{1}\pt{4.9}{-2.4}\pt{6.3}{-1}\pt{-4.9}{2.4}\pt{-6.3}{1}\pt{-4.9}{-2.4}\pt{-6.3}{-1}
%arcs
\varline{500}{0.6}
\bez{0}{2.4}{2.17}{1.88}{3.9}{0}\bez{0}{2.4}{1.83}{0.92}{3.9}{0}
\bez{0}{0.8}{2.06}{0.9}{3.9}{0}\bez{0}{0.8}{1.94}{0.01}{3.9}{0}
\bez{0}{-2.4}{2.17}{-1.88}{3.9}{0}\bez{0}{-2.4}{1.83}{-0.92}{3.9}{0}
\bez{0}{-0.8}{2.06}{-0.9}{3.9}{0}\bez{0}{-0.8}{1.94}{-0.01}{3.9}{0}
\bez{0}{2.4}{-2.17}{1.88}{-3.9}{0}\bez{0}{2.4}{-1.83}{0.92}{-3.9}{0}
\bez{0}{0.8}{-2.06}{0.9}{-3.9}{0}\bez{0}{0.8}{-1.94}{0.01}{-3.9}{0}
\bez{0}{-2.4}{-2.17}{-1.88}{-3.9}{0}\bez{0}{-2.4}{-1.83}{-0.92}{-3.9}{0}
\bez{0}{-0.8}{-2.06}{-0.9}{-3.9}{0}\bez{0}{-0.8}{-1.94}{-0.01}{-3.9}{0}
\dl{3.9}{0}{4.9}{2.4}\dl{3.9}{0}{6.3}{1}\dl{4.9}{2.4}{6.3}{1}
\dl{3.9}{0}{4.9}{-2.4}\dl{3.9}{0}{6.3}{-1}\dl{4.9}{-2.4}{6.3}{-1}
\dl{-3.9}{0}{-4.9}{2.4}\dl{-3.9}{0}{-6.3}{1}\dl{-4.9}{2.4}{-6.3}{1}
\dl{-3.9}{0}{-4.9}{-2.4}\dl{-3.9}{0}{-6.3}{-1}\dl{-4.9}{-2.4}{-6.3}{-1}
\varline{850}{5.5}
\bez{-3.9}{0}{0}{-5.84}{3.9}{0}
\thnline
%right 8 arrows
\dl{2.22}{1.44}{2.07}{1.67}\dl{2.18}{1.47}{2.07}{1.67}\dl{2.15}{1.49}{2.07}{1.67}\dl{2.12}{1.51}{2.07}{1.67}\dl{2.1}{1.53}{2.07}{1.67}\dl{2.08}{1.54}{2.07}{1.67}\dl{2.06}{1.55}{2.07}{1.67}
\dl{2.22}{1.44}{1.95}{1.48}\dl{2.18}{1.47}{1.95}{1.48}\dl{2.15}{1.49}{1.95}{1.48}\dl{2.12}{1.51}{1.95}{1.48}\dl{2.1}{1.53}{1.95}{1.48}\dl{2.08}{1.54}{1.95}{1.48}\dl{2.06}{1.55}{1.95}{1.48}
\dl{1.78}{1.12}{1.95}{0.9}\dl{1.82}{1.09}{1.95}{0.9}\dl{1.86}{1.07}{1.95}{0.9}\dl{1.89}{1.05}{1.95}{0.9}\dl{1.92}{1.04}{1.95}{0.9}\dl{1.94}{1.03}{1.95}{0.9}\dl{1.96}{1.02}{1.95}{0.9}
\dl{1.78}{1.12}{2.05}{1.09}\dl{1.82}{1.09}{2.05}{1.09}\dl{1.86}{1.07}{2.05}{1.09}\dl{1.89}{1.05}{2.05}{1.09}\dl{1.92}{1.04}{2.05}{1.09}\dl{1.94}{1.03}{2.05}{1.09}\dl{1.96}{1.02}{2.05}{1.09}\dl{1.96}{1.02}{2.05}{1.09}
\dl{2.12}{0.63}{1.89}{0.8}\dl{2.07}{0.64}{1.89}{0.8}\dl{2.03}{0.65}{1.89}{0.8}\dl{2}{0.65}{1.89}{0.8}\dl{1.97}{0.66}{1.89}{0.8}\dl{1.94}{0.66}{1.89}{0.8}\dl{1.92}{0.67}{1.89}{0.8}
\dl{2.12}{0.63}{1.84}{0.57}\dl{2.07}{0.64}{1.84}{0.57}\dl{2.03}{0.65}{1.84}{0.57}\dl{2}{0.65}{1.84}{0.57}\dl{1.97}{0.66}{1.84}{0.57}\dl{1.94}{0.66}{1.84}{0.57}\dl{1.92}{0.67}{1.84}{0.57}
\dl{1.81}{0.23}{2.04}{0.06}\dl{1.86}{0.22}{2.04}{0.06}\dl{1.9}{0.21}{2.04}{0.06}\dl{1.93}{0.21}{2.04}{0.06}\dl{1.96}{0.2}{2.04}{0.06}\dl{1.99}{0.2}{2.04}{0.06}\dl{2.01}{0.19}{2.04}{0.06}
\dl{1.81}{0.23}{2.09}{0.3}\dl{1.86}{0.22}{2.09}{0.3}\dl{1.9}{0.21}{2.09}{0.3}\dl{1.93}{0.21}{2.09}{0.3}\dl{1.96}{0.2}{2.09}{0.3}\dl{1.99}{0.2}{2.09}{0.3}\dl{2.01}{0.19}{2.09}{0.3}
\dl{2.03}{-0.18}{1.8}{-0.35}\dl{1.98}{-0.19}{1.8}{-0.35}\dl{1.94}{-0.2}{1.8}{-0.35}\dl{1.91}{-0.2}{1.8}{-0.35}\dl{1.88}{-0.21}{1.8}{-0.35}\dl{1.85}{-0.21}{1.8}{-0.35}\dl{1.83}{-0.22}{1.8}{-0.35}
\dl{2.03}{-0.18}{1.75}{-0.12}\dl{1.98}{-0.19}{1.75}{-0.12}\dl{1.94}{-0.2}{1.75}{-0.12}\dl{1.91}{-0.2}{1.75}{-0.12}\dl{1.88}{-0.21}{1.75}{-0.12}\dl{1.85}{-0.21}{1.75}{-0.12}\dl{1.83}{-0.22}{1.75}{-0.12}
\dl{1.88}{-0.68}{2.11}{-0.51}\dl{1.93}{-0.67}{2.11}{-0.51}\dl{1.97}{-0.66}{2.11}{-0.51}\dl{2}{-0.66}{2.11}{-0.51}\dl{2.03}{-0.65}{2.11}{-0.51}\dl{2.06}{-0.65}{2.11}{-0.51}\dl{2.08}{-0.64}{2.11}{-0.51}
\dl{1.88}{-0.68}{2.16}{-0.74}\dl{1.93}{-0.67}{2.16}{-0.74}\dl{1.97}{-0.66}{2.16}{-0.74}\dl{2}{-0.66}{2.16}{-0.74}\dl{2.03}{-0.65}{2.16}{-0.74}\dl{2.06}{-0.65}{2.16}{-0.74}\dl{2.08}{-0.64}{2.16}{-0.74}
\dl{2.03}{-1.57}{2.18}{-1.34}\dl{2.07}{-1.54}{2.18}{-1.34}\dl{2.1}{-1.52}{2.18}{-1.34}\dl{2.13}{-1.5}{2.18}{-1.34}\dl{2.15}{-1.48}{2.18}{-1.34}\dl{2.17}{-1.47}{2.18}{-1.34}\dl{2.19}{-1.46}{2.18}{-1.34}
\dl{2.03}{-1.57}{2.3}{-1.53}\dl{2.07}{-1.54}{2.3}{-1.53}\dl{2.1}{-1.52}{2.3}{-1.53}\dl{2.13}{-1.5}{2.3}{-1.53}\dl{2.15}{-1.48}{2.3}{-1.53}\dl{2.17}{-1.47}{2.3}{-1.53}\dl{2.19}{-1.46}{2.3}{-1.53}
\dl{1.96}{-1.02}{1.83}{-1.23}\dl{1.93}{-1.02}{1.83}{-1.23}\dl{1.9}{-1.04}{1.83}{-1.23}\dl{1.88}{-1.06}{1.83}{-1.23}\dl{1.86}{-1.08}{1.83}{-1.23}\dl{1.84}{-1.09}{1.83}{-1.23}\dl{1.82}{-1.1}{1.83}{-1.23}\dl{1.8}{-1.11}{1.83}{-1.23}
\dl{1.96}{-1.02}{1.71}{-1.04}\dl{1.93}{-1.02}{1.71}{-1.04}\dl{1.9}{-1.04}{1.71}{-1.04}\dl{1.88}{-1.06}{1.71}{-1.04}\dl{1.86}{-1.08}{1.71}{-1.04}\dl{1.84}{-1.09}{1.71}{-1.04}\dl{1.82}{-1.1}{1.71}{-1.04}\dl{1.8}{-1.11}{1.71}{-1.04}
%left 8 arrows
\dl{-2.22}{-1.44}{-2.07}{-1.67}\dl{-2.18}{-1.47}{-2.07}{-1.67}\dl{-2.15}{-1.49}{-2.07}{-1.67}\dl{-2.12}{-1.51}{-2.07}{-1.67}\dl{-2.1}{-1.53}{-2.07}{-1.67}\dl{-2.08}{-1.54}{-2.07}{-1.67}\dl{-2.06}{-1.55}{-2.07}{-1.67}
\dl{-2.22}{-1.44}{-1.95}{-1.48}\dl{-2.18}{-1.47}{-1.95}{-1.48}\dl{-2.15}{-1.49}{-1.95}{-1.48}\dl{-2.12}{-1.51}{-1.95}{-1.48}\dl{-2.1}{-1.53}{-1.95}{-1.48}\dl{-2.08}{-1.54}{-1.95}{-1.48}\dl{-2.06}{-1.55}{-1.95}{-1.48}
\dl{-1.78}{-1.12}{-1.95}{-0.9}\dl{-1.82}{-1.09}{-1.95}{-0.9}\dl{-1.86}{-1.07}{-1.95}{-0.9}\dl{-1.89}{-1.05}{-1.95}{-0.9}\dl{-1.92}{-1.04}{-1.95}{-0.9}\dl{-1.94}{-1.03}{-1.95}{-0.9}\dl{-1.96}{-1.02}{-1.95}{-0.9}
\dl{-1.78}{-1.12}{-2.05}{-1.09}\dl{-1.82}{-1.09}{-2.05}{-1.09}\dl{-1.86}{-1.07}{-2.05}{-1.09}\dl{-1.89}{-1.05}{-2.05}{-1.09}\dl{-1.92}{-1.04}{-2.05}{-1.09}\dl{-1.94}{-1.03}{-2.05}{-1.09}\dl{-1.96}{-1.02}{-2.05}{-1.09}\dl{-1.96}{-1.02}{-2.05}{-1.09}
\dl{-2.12}{-0.63}{-1.89}{-0.8}\dl{-2.07}{-0.64}{-1.89}{-0.8}\dl{-2.03}{-0.65}{-1.89}{-0.8}\dl{-2}{-0.65}{-1.89}{-0.8}\dl{-1.97}{-0.66}{-1.89}{-0.8}\dl{-1.94}{-0.66}{-1.89}{-0.8}\dl{-1.92}{-0.67}{-1.89}{-0.8}
\dl{-2.12}{-0.63}{-1.84}{-0.57}\dl{-2.07}{-0.64}{-1.84}{-0.57}\dl{-2.03}{-0.65}{-1.84}{-0.57}\dl{-2}{-0.65}{-1.84}{-0.57}\dl{-1.97}{-0.66}{-1.84}{-0.57}\dl{-1.94}{-0.66}{-1.84}{-0.57}\dl{-1.92}{-0.67}{-1.84}{-0.57}
\dl{-1.81}{-0.23}{-2.04}{-0.06}\dl{-1.86}{-0.22}{-2.04}{-0.06}\dl{-1.9}{-0.21}{-2.04}{-0.06}\dl{-1.93}{-0.21}{-2.04}{-0.06}\dl{-1.96}{-0.2}{-2.04}{-0.06}\dl{-1.99}{-0.2}{-2.04}{-0.06}\dl{-2.01}{-0.19}{-2.04}{-0.06}
\dl{-1.81}{-0.23}{-2.09}{-0.3}\dl{-1.86}{-0.22}{-2.09}{-0.3}\dl{-1.9}{-0.21}{-2.09}{-0.3}\dl{-1.93}{-0.21}{-2.09}{-0.3}\dl{-1.96}{-0.2}{-2.09}{-0.3}\dl{-1.99}{-0.2}{-2.09}{-0.3}\dl{-2.01}{-0.19}{-2.09}{-0.3}
\dl{-2.03}{0.18}{-1.8}{0.35}\dl{-1.98}{0.19}{-1.8}{0.35}\dl{-1.94}{0.2}{-1.8}{0.35}\dl{-1.91}{0.2}{-1.8}{0.35}\dl{-1.88}{0.21}{-1.8}{0.35}\dl{-1.85}{0.21}{-1.8}{0.35}\dl{-1.83}{0.22}{-1.8}{0.35}
\dl{-2.03}{0.18}{-1.75}{0.12}\dl{-1.98}{0.19}{-1.75}{0.12}\dl{-1.94}{0.2}{-1.75}{0.12}\dl{-1.91}{0.2}{-1.75}{0.12}\dl{-1.88}{0.21}{-1.75}{0.12}\dl{-1.85}{0.21}{-1.75}{0.12}\dl{-1.83}{0.22}{-1.75}{0.12}
\dl{-1.88}{0.68}{-2.11}{0.51}\dl{-1.93}{0.67}{-2.11}{0.51}\dl{-1.97}{0.66}{-2.11}{0.51}\dl{-2}{0.66}{-2.11}{0.51}\dl{-2.03}{0.65}{-2.11}{0.51}\dl{-2.06}{0.65}{-2.11}{0.51}\dl{-2.08}{0.64}{-2.11}{0.51}
\dl{-1.88}{0.68}{-2.16}{0.74}\dl{-1.93}{0.67}{-2.16}{0.74}\dl{-1.97}{0.66}{-2.16}{0.74}\dl{-2}{0.66}{-2.16}{0.74}\dl{-2.03}{0.65}{-2.16}{0.74}\dl{-2.06}{0.65}{-2.16}{0.74}\dl{-2.08}{0.64}{-2.16}{0.74}
\dl{-2.03}{1.57}{-2.18}{1.34}\dl{-2.07}{1.54}{-2.18}{1.34}\dl{-2.1}{1.52}{-2.18}{1.34}\dl{-2.13}{1.5}{-2.18}{1.34}\dl{-2.15}{1.48}{-2.18}{1.34}\dl{-2.17}{1.47}{-2.18}{1.34}\dl{-2.19}{1.46}{-2.18}{1.34}
\dl{-2.03}{1.57}{-2.3}{1.53}\dl{-2.07}{1.54}{-2.3}{1.53}\dl{-2.1}{1.52}{-2.3}{1.53}\dl{-2.13}{1.5}{-2.3}{1.53}\dl{-2.15}{1.48}{-2.3}{1.53}\dl{-2.17}{1.47}{-2.3}{1.53}\dl{-2.19}{1.46}{-2.3}{1.53}
\dl{-1.96}{1.02}{-1.83}{1.23}\dl{-1.93}{1.02}{-1.83}{1.23}\dl{-1.9}{1.04}{-1.83}{1.23}\dl{-1.88}{1.06}{-1.83}{1.23}\dl{-1.86}{1.08}{-1.83}{1.23}\dl{-1.84}{1.09}{-1.83}{1.23}\dl{-1.82}{1.1}{-1.83}{1.23}\dl{-1.8}{1.11}{-1.83}{1.23}
\dl{-1.96}{1.02}{-1.71}{1.04}\dl{-1.93}{1.02}{-1.71}{1.04}\dl{-1.9}{1.04}{-1.71}{1.04}\dl{-1.88}{1.06}{-1.71}{1.04}\dl{-1.86}{1.08}{-1.71}{1.04}\dl{-1.84}{1.09}{-1.71}{1.04}\dl{-1.82}{1.1}{-1.71}{1.04}\dl{-1.8}{1.11}{-1.71}{1.04}
%
%TRarrows
\dl{4.38}{1.13}{4.45}{1.05}\dl{4.39}{1.16}{4.45}{1.05}\dl{4.4}{1.19}{4.45}{1.05}\dl{4.41}{1.22}{4.45}{1.05}\dl{4.43}{1.27}{4.45}{1.05}\dl{4.45}{1.32}{4.45}{1.05}\dl{4.47}{1.37}{4.45}{1.05}
\dl{4.38}{1.13}{4.26}{1.14}\dl{4.39}{1.16}{4.26}{1.14}\dl{4.4}{1.19}{4.26}{1.14}\dl{4.41}{1.22}{4.26}{1.14}\dl{4.43}{1.27}{4.26}{1.14}\dl{4.45}{1.32}{4.26}{1.14}\dl{4.47}{1.37}{4.26}{1.14}
\dl{4.93}{0.43}{5.27}{0.42}\dl{5}{0.45}{5.27}{0.42}\dl{5.05}{0.47}{5.27}{0.42}\dl{5.08}{0.49}{5.27}{0.42}\dl{5.11}{0.5}{5.27}{0.42}\dl{5.14}{0.51}{5.27}{0.42}\dl{5.17}{0.52}{5.27}{0.42}
\dl{4.93}{0.43}{5.17}{0.67}\dl{5}{0.45}{5.17}{0.67}\dl{5.05}{0.47}{5.17}{0.67}\dl{5.08}{0.49}{5.17}{0.67}\dl{5.11}{0.5}{5.17}{0.67}\dl{5.14}{0.51}{5.17}{0.67}\dl{5.17}{0.52}{5.17}{0.67}
\dl{5.73}{1.57}{5.43}{1.7}\dl{5.67}{1.63}{5.43}{1.7}\dl{5.63}{1.67}{5.43}{1.7}\dl{5.6}{1.7}{5.43}{1.7}\dl{5.57}{1.73}{5.43}{1.7}\dl{5.55}{1.75}{5.43}{1.7}\dl{5.53}{1.77}{5.43}{1.7}
\dl{5.73}{1.57}{5.6}{1.88}\dl{5.67}{1.63}{5.6}{1.88}\dl{5.63}{1.67}{5.6}{1.88}\dl{5.6}{1.7}{5.6}{1.88}\dl{5.57}{1.73}{5.6}{1.88}\dl{5.55}{1.75}{5.6}{1.88}\dl{5.53}{1.77}{5.6}{1.88}
%BRarrows
\dl{4.42}{-1.27}{4.35}{-1.35}\dl{4.41}{-1.24}{4.35}{-1.35}\dl{4.4}{-1.21}{4.35}{-1.35}\dl{4.39}{-1.18}{4.35}{-1.35}\dl{4.37}{-1.13}{4.35}{-1.35}\dl{4.35}{-1.08}{4.35}{-1.35}\dl{4.33}{-1.03}{4.35}{-1.35}
\dl{4.42}{-1.27}{4.54}{-1.26}\dl{4.41}{-1.24}{4.54}{-1.26}\dl{4.4}{-1.21}{4.54}{-1.26}\dl{4.39}{-1.18}{4.54}{-1.26}\dl{4.37}{-1.13}{4.54}{-1.26}\dl{4.35}{-1.08}{4.54}{-1.26}\dl{4.33}{-1.03}{4.54}{-1.26}
\dl{5.27}{-0.57}{4.93}{-0.58}\dl{5.2}{-0.55}{4.93}{-0.58}\dl{5.15}{-0.53}{4.93}{-0.58}\dl{5.12}{-0.51}{4.93}{-0.58}\dl{5.09}{-0.5}{4.93}{-0.58}\dl{5.06}{-0.49}{4.93}{-0.58}\dl{5.03}{-0.48}{4.93}{-0.58}
\dl{5.27}{-0.57}{5.03}{-0.33}\dl{5.2}{-0.55}{5.03}{-0.33}\dl{5.15}{-0.53}{5.03}{-0.33}\dl{5.12}{-0.51}{5.03}{-0.33}\dl{5.09}{-0.5}{5.03}{-0.33}\dl{5.06}{-0.49}{5.03}{-0.33}\dl{5.03}{-0.48}{5.03}{-0.33}
\dl{5.47}{-1.83}{5.77}{-1.7}\dl{5.53}{-1.77}{5.77}{-1.7}\dl{5.57}{-1.73}{5.77}{-1.7}\dl{5.6}{-1.7}{5.77}{-1.7}\dl{5.63}{-1.67}{5.77}{-1.7}\dl{5.65}{-1.65}{5.77}{-1.7}\dl{5.67}{-1.63}{5.77}{-1.7}
\dl{5.47}{-1.83}{5.6}{-1.52}\dl{5.53}{-1.77}{5.6}{-1.52}\dl{5.57}{-1.73}{5.6}{-1.52}\dl{5.6}{-1.7}{5.6}{-1.52}\dl{5.63}{-1.67}{5.6}{-1.52}\dl{5.65}{-1.65}{5.6}{-1.52}\dl{5.67}{-1.63}{5.6}{-1.52}
%TLarrows
\dl{-4.42}{1.27}{-4.35}{1.35}\dl{-4.41}{1.24}{-4.35}{1.35}\dl{-4.4}{1.21}{-4.35}{1.35}\dl{-4.39}{1.18}{-4.35}{1.35}\dl{-4.37}{1.13}{-4.35}{1.35}\dl{-4.35}{1.08}{-4.35}{1.35}\dl{-4.33}{1.03}{-4.35}{1.35}
\dl{-4.42}{1.27}{-4.54}{1.26}\dl{-4.41}{1.24}{-4.54}{1.26}\dl{-4.4}{1.21}{-4.54}{1.26}\dl{-4.39}{1.18}{-4.54}{1.26}\dl{-4.37}{1.13}{-4.54}{1.26}\dl{-4.35}{1.08}{-4.54}{1.26}\dl{-4.33}{1.03}{-4.54}{1.26}
\dl{-5.27}{0.57}{-4.93}{0.58}\dl{-5.2}{0.55}{-4.93}{0.58}\dl{-5.15}{0.53}{-4.93}{0.58}\dl{-5.12}{0.51}{-4.93}{0.58}\dl{-5.09}{0.5}{-4.93}{0.58}\dl{-5.06}{0.49}{-4.93}{0.58}\dl{-5.03}{0.48}{-4.93}{0.58}
\dl{-5.27}{0.57}{-5.03}{0.33}\dl{-5.2}{0.55}{-5.03}{0.33}\dl{-5.15}{0.53}{-5.03}{0.33}\dl{-5.12}{0.51}{-5.03}{0.33}\dl{-5.09}{0.5}{-5.03}{0.33}\dl{-5.06}{0.49}{-5.03}{0.33}\dl{-5.03}{0.48}{-5.03}{0.33}
\dl{-5.47}{1.83}{-5.77}{1.7}\dl{-5.53}{1.77}{-5.77}{1.7}\dl{-5.57}{1.73}{-5.77}{1.7}\dl{-5.6}{1.7}{-5.77}{1.7}\dl{-5.63}{1.67}{-5.77}{1.7}\dl{-5.65}{1.65}{-5.77}{1.7}\dl{-5.67}{1.63}{-5.77}{1.7}
\dl{-5.47}{1.83}{-5.6}{1.52}\dl{-5.53}{1.77}{-5.6}{1.52}\dl{-5.57}{1.73}{-5.6}{1.52}\dl{-5.6}{1.7}{-5.6}{1.52}\dl{-5.63}{1.67}{-5.6}{1.52}\dl{-5.65}{1.65}{-5.6}{1.52}\dl{-5.67}{1.63}{-5.6}{1.52}
%BLarrows
\dl{-4.38}{-1.13}{-4.45}{-1.05}\dl{-4.39}{-1.16}{-4.45}{-1.05}\dl{-4.4}{-1.19}{-4.45}{-1.05}\dl{-4.41}{-1.22}{-4.45}{-1.05}\dl{-4.43}{-1.27}{-4.45}{-1.05}\dl{-4.45}{-1.32}{-4.45}{-1.05}\dl{-4.47}{-1.37}{-4.45}{-1.05}
\dl{-4.38}{-1.13}{-4.26}{-1.14}\dl{-4.39}{-1.16}{-4.26}{-1.14}\dl{-4.4}{-1.19}{-4.26}{-1.14}\dl{-4.41}{-1.22}{-4.26}{-1.14}\dl{-4.43}{-1.27}{-4.26}{-1.14}\dl{-4.45}{-1.32}{-4.26}{-1.14}\dl{-4.47}{-1.37}{-4.26}{-1.14}
\dl{-4.93}{-0.43}{-5.27}{-0.42}\dl{-5}{-0.45}{-5.27}{-0.42}\dl{-5.05}{-0.47}{-5.27}{-0.42}\dl{-5.08}{-0.49}{-5.27}{-0.42}\dl{-5.11}{-0.5}{-5.27}{-0.42}\dl{-5.14}{-0.51}{-5.27}{-0.42}\dl{-5.17}{-0.52}{-5.27}{-0.42}
\dl{-4.93}{-0.43}{-5.17}{-0.67}\dl{-5}{-0.45}{-5.17}{-0.67}\dl{-5.05}{-0.47}{-5.17}{-0.67}\dl{-5.08}{-0.49}{-5.17}{-0.67}\dl{-5.11}{-0.5}{-5.17}{-0.67}\dl{-5.14}{-0.51}{-5.17}{-0.67}\dl{-5.17}{-0.52}{-5.17}{-0.67}
\dl{-5.73}{-1.57}{-5.43}{-1.7}\dl{-5.67}{-1.63}{-5.43}{-1.7}\dl{-5.63}{-1.67}{-5.43}{-1.7}\dl{-5.6}{-1.7}{-5.43}{-1.7}\dl{-5.57}{-1.73}{-5.43}{-1.7}\dl{-5.55}{-1.75}{-5.43}{-1.7}\dl{-5.53}{-1.77}{-5.43}{-1.7}
\dl{-5.73}{-1.57}{-5.6}{-1.88}\dl{-5.67}{-1.63}{-5.6}{-1.88}\dl{-5.63}{-1.67}{-5.6}{-1.88}\dl{-5.6}{-1.7}{-5.6}{-1.88}\dl{-5.57}{-1.73}{-5.6}{-1.88}\dl{-5.55}{-1.75}{-5.6}{-1.88}\dl{-5.53}{-1.77}{-5.6}{-1.88}
%dotted arrow
\dl{0.13}{-2.91}{-0.07}{-2.91}
\dl{0.13}{-2.91}{-0.13}{-2.81}\dl{0.09}{-2.91}{-0.13}{-2.81}\dl{0.05}{-2.91}{-0.13}{-2.81}\dl{0.01}{-2.91}{-0.13}{-2.81}\dl{-0.03}{-2.91}{-0.13}{-2.81}\dl{-0.07}{-2.91}{-0.13}{-2.81}
\dl{0.13}{-2.91}{-0.13}{-3.01}\dl{0.09}{-2.91}{-0.13}{-3.01}\dl{0.05}{-2.91}{-0.13}{-3.01}\dl{0.01}{-2.91}{-0.13}{-3.01}\dl{-0.03}{-2.91}{-0.13}{-3.01}\dl{-0.07}{-2.91}{-0.13}{-3.01}
%vertices labels
\point{-7}{0.85}{\small $u_1$}\point{-5.2}{-2.9}{\small $u_2$}\point{6.45}{-1.1}{\small $u_4$}\point{4.67}{2.63}{\small $u_3$}
\point{-5.2}{2.63}{\small $v_1$}\point{-7}{-1.1}{\small $v_2$}\point{4.67}{-2.9}{\small $v_4$}\point{6.45}{0.85}{\small $v_3$}
\point{-3.95}{0.42}{\small $x$}\point{3.7}{0.42}{\small $y$}
%colours
\ptlu{-5.8}{1.6}{1}\ptlr{-4.38}{1.3}{5}\ptld{-5.45}{0.7}{6}
\ptld{-5.8}{-1.6}{2}\ptlr{-4.38}{-1.3}{6}\ptlu{-5.45}{-0.7}{5}
\ptlu{5.8}{1.6}{3}\ptll{4.33}{1.3}{6}\ptld{5.45}{0.7}{5}
\ptld{5.8}{-1.6}{4}\ptll{4.33}{-1.3}{5}\ptlu{5.45}{-0.7}{6}
\ptld{-1.15}{2.6}{3}\ptld{-1.15}{1.35}{4}\ptlu{-1.15}{-1.62}{4}\ptlu{-1.15}{-0.5}{3}
\ptld{1.15}{2.6}{1}\ptld{1.15}{1.35}{1}\ptlu{1.15}{-1.62}{2}\ptlu{1.15}{-0.5}{2}
\ptlu{-10.7}{2.1}{\textup{\normalsize (a)}}
\]\\[-4ex]

\[ \unit = 0.6cm
%vertices
\pt{1.3}{0.8}\pt{1.3}{-0.8}\pt{1.3}{2.4}\pt{1.3}{-2.4}\pt{-1.3}{0.8}\pt{-1.3}{-0.8}\pt{-1.3}{2.4}\pt{-1.3}{-2.4}
\pt{3.9}{0}\pt{-3.9}{0}\pt{6.15}{1.3}\pt{6.15}{-1.3}\pt{-6.15}{1.3}\pt{-6.15}{-1.3}
\pt{8.28}{2.79}\pt{8.62}{0.5}\pt{8.28}{-2.79}\pt{8.62}{-0.5}\pt{-8.28}{2.79}\pt{-8.62}{0.5}\pt{-8.28}{-2.79}\pt{-8.62}{-0.5}
%arcs
\varline{500}{0.6}
\bez{-1.3}{2.4}{0}{2.8}{1.3}{2.4}\bez{-1.3}{2.4}{0}{2}{1.3}{2.4}
\bez{-1.3}{-2.4}{0}{-2.8}{1.3}{-2.4}\bez{-1.3}{-2.4}{0}{-2}{1.3}{-2.4}
\bez{-1.3}{0.8}{0}{1.2}{1.3}{0.8}\bez{-1.3}{0.8}{0}{0.4}{1.3}{0.8}
\bez{-1.3}{-0.8}{0}{-1.2}{1.3}{-0.8}\bez{-1.3}{-0.8}{0}{-0.4}{1.3}{-0.8}
\bez{1.3}{2.4}{2.77}{1.48}{3.9}{0}\bez{1.3}{2.4}{2.43}{0.92}{3.9}{0}
\bez{1.3}{0.8}{2.66}{0.7}{3.9}{0}\bez{1.3}{0.8}{2.54}{0.1}{3.9}{0}
\bez{1.3}{-2.4}{2.77}{-1.48}{3.9}{0}\bez{1.3}{-2.4}{2.43}{-0.92}{3.9}{0}
\bez{1.3}{-0.8}{2.66}{-0.7}{3.9}{0}\bez{1.3}{-0.8}{2.54}{-0.1}{3.9}{0}
\bez{-1.3}{2.4}{-2.77}{1.48}{-3.9}{0}\bez{-1.3}{2.4}{-2.43}{0.92}{-3.9}{0}
\bez{-1.3}{0.8}{-2.66}{0.7}{-3.9}{0}\bez{-1.3}{0.8}{-2.54}{0.1}{-3.9}{0}
\bez{-1.3}{-2.4}{-2.77}{-1.48}{-3.9}{0}\bez{-1.3}{-2.4}{-2.43}{-0.92}{-3.9}{0}
\bez{-1.3}{-0.8}{-2.66}{-0.7}{-3.9}{0}\bez{-1.3}{-0.8}{-2.54}{-0.1}{-3.9}{0}
\bez{6.15}{1.3}{6.55}{0}{6.15}{-1.3}\bez{6.15}{1.3}{5.75}{0}{6.15}{-1.3}
\bez{3.9}{0}{4.83}{1}{6.15}{1.3}\bez{3.9}{0}{5.23}{0.3}{6.15}{1.3}
\bez{3.9}{0}{4.83}{-1}{6.15}{-1.3}\bez{3.9}{0}{5.23}{-0.3}{6.15}{-1.3}
\bez{-6.15}{1.3}{-6.55}{0}{-6.15}{-1.3}\bez{-6.15}{1.3}{-5.75}{0}{-6.15}{-1.3}
\bez{-3.9}{0}{-4.83}{1}{-6.15}{1.3}\bez{-3.9}{0}{-5.23}{0.3}{-6.15}{1.3}
\bez{-3.9}{0}{-4.83}{-1}{-6.15}{-1.3}\bez{-3.9}{0}{-5.23}{-0.3}{-6.15}{-1.3}
\dl{8.28}{2.79}{8.62}{0.5}\dl{8.28}{2.79}{6.15}{1.3}\dl{8.62}{0.5}{6.15}{1.3}
\dl{-8.28}{2.79}{-8.62}{0.5}\dl{-8.28}{2.79}{-6.15}{1.3}\dl{-8.62}{0.5}{-6.15}{1.3}
\dl{8.28}{-2.79}{8.62}{-0.5}\dl{8.28}{-2.79}{6.15}{-1.3}\dl{8.62}{-0.5}{6.15}{-1.3}
\dl{-8.28}{-2.79}{-8.62}{-0.5}\dl{-8.28}{-2.79}{-6.15}{-1.3}\dl{-8.62}{-0.5}{-6.15}{-1.3}
\varline{850}{5.5}
\bez{-3.9}{0}{-2.98}{-4.84}{1.3}{-2.4}%\bez{3.9}{0}{2.98}{4.84}{-1.3}{2.4}
%middle arrows
\thnline
\dl{0.13}{2.6}{-0.13}{2.7}\dl{0.09}{2.6}{-0.13}{2.7}\dl{0.05}{2.6}{-0.13}{2.7}\dl{0.01}{2.6}{-0.13}{2.7}\dl{-0.03}{2.6}{-0.13}{2.7}\dl{-0.07}{2.6}{-0.13}{2.7}
\dl{0.13}{2.6}{-0.13}{2.5}\dl{0.09}{2.6}{-0.13}{2.5}\dl{0.05}{2.6}{-0.13}{2.5}\dl{0.01}{2.6}{-0.13}{2.5}\dl{-0.03}{2.6}{-0.13}{2.5}\dl{-0.07}{2.6}{-0.13}{2.5}
\dl{0.13}{1}{-0.13}{1.1}\dl{0.09}{1}{-0.13}{1.1}\dl{0.05}{1}{-0.13}{1.1}\dl{0.01}{1}{-0.13}{1.1}\dl{-0.03}{1}{-0.13}{1.1}\dl{-0.07}{1}{-0.13}{1.1}
\dl{0.13}{1}{-0.13}{0.9}\dl{0.09}{1}{-0.13}{0.9}\dl{0.05}{1}{-0.13}{0.9}\dl{0.01}{1}{-0.13}{0.9}\dl{-0.03}{1}{-0.13}{0.9}\dl{-0.07}{1}{-0.13}{0.9}
\dl{0.13}{-0.6}{-0.13}{-0.7}\dl{0.09}{-0.6}{-0.13}{-0.7}\dl{0.05}{-0.6}{-0.13}{-0.7}\dl{0.01}{-0.6}{-0.13}{-0.7}\dl{-0.03}{-0.6}{-0.13}{-0.7}\dl{-0.07}{-0.6}{-0.13}{-0.7}
\dl{0.13}{-0.6}{-0.13}{-0.5}\dl{0.09}{-0.6}{-0.13}{-0.5}\dl{0.05}{-0.6}{-0.13}{-0.5}\dl{0.01}{-0.6}{-0.13}{-0.5}\dl{-0.03}{-0.6}{-0.13}{-0.5}\dl{-0.07}{-0.6}{-0.13}{-0.5}
\dl{0.13}{-2.2}{-0.13}{-2.3}\dl{0.09}{-2.2}{-0.13}{-2.3}\dl{0.05}{-2.2}{-0.13}{-2.3}\dl{0.01}{-2.2}{-0.13}{-2.3}\dl{-0.03}{-2.2}{-0.13}{-2.3}\dl{-0.07}{-2.2}{-0.13}{-2.3}
\dl{0.13}{-2.2}{-0.13}{-2.1}\dl{0.09}{-2.2}{-0.13}{-2.1}\dl{0.05}{-2.2}{-0.13}{-2.1}\dl{0.01}{-2.2}{-0.13}{-2.1}\dl{-0.03}{-2.2}{-0.13}{-2.1}\dl{-0.07}{-2.2}{-0.13}{-2.1}
\dl{-0.13}{2.2}{0.13}{2.3}\dl{-0.09}{2.2}{0.13}{2.3}\dl{-0.05}{2.2}{0.13}{2.3}\dl{-0.01}{2.2}{0.13}{2.3}\dl{0.03}{2.2}{0.13}{2.3}\dl{0.07}{2.2}{0.13}{2.3}
\dl{-0.13}{2.2}{0.13}{2.1}\dl{-0.09}{2.2}{0.13}{2.1}\dl{-0.05}{2.2}{0.13}{2.1}\dl{-0.01}{2.2}{0.13}{2.1}\dl{0.03}{2.2}{0.13}{2.1}\dl{0.07}{2.2}{0.13}{2.1}
\dl{-0.13}{0.6}{0.13}{0.7}\dl{-0.09}{0.6}{0.13}{0.7}\dl{-0.05}{0.6}{0.13}{0.7}\dl{-0.01}{0.6}{0.13}{0.7}\dl{0.03}{0.6}{0.13}{0.7}\dl{0.07}{0.6}{0.13}{0.7}
\dl{-0.13}{0.6}{0.13}{0.5}\dl{-0.09}{0.6}{0.13}{0.5}\dl{-0.05}{0.6}{0.13}{0.5}\dl{-0.01}{0.6}{0.13}{0.5}\dl{0.03}{0.6}{0.13}{0.5}\dl{0.07}{0.6}{0.13}{0.5}
\dl{-0.13}{-1}{0.13}{-0.9}\dl{-0.09}{-1}{0.13}{-0.9}\dl{-0.05}{-1}{0.13}{-0.9}\dl{-0.01}{-1}{0.13}{-0.9}\dl{0.03}{-1}{0.13}{-0.9}\dl{0.07}{-1}{0.13}{-0.9}
\dl{-0.13}{-1}{0.13}{-1.1}\dl{-0.09}{-1}{0.13}{-1.1}\dl{-0.05}{-1}{0.13}{-1.1}\dl{-0.01}{-1}{0.13}{-1.1}\dl{0.03}{-1}{0.13}{-1.1}\dl{0.07}{-1}{0.13}{-1.1}
\dl{-0.13}{-2.6}{0.13}{-2.5}\dl{-0.09}{-2.6}{0.13}{-2.5}\dl{-0.05}{-2.6}{0.13}{-2.5}\dl{-0.01}{-2.6}{0.13}{-2.5}\dl{0.03}{-2.6}{0.13}{-2.5}\dl{0.07}{-2.6}{0.13}{-2.5}
\dl{-0.13}{-2.6}{0.13}{-2.7}\dl{-0.09}{-2.6}{0.13}{-2.7}\dl{-0.05}{-2.6}{0.13}{-2.7}\dl{-0.01}{-2.6}{0.13}{-2.7}\dl{0.03}{-2.6}{0.13}{-2.7}\dl{0.07}{-2.6}{0.13}{-2.7}
%right 8 arrows
\dl{2.82}{1.22}{2.67}{1.49}\dl{2.77}{1.27}{2.67}{1.49}\dl{2.73}{1.31}{2.67}{1.49}\dl{2.7}{1.34}{2.67}{1.49}\dl{2.68}{1.36}{2.67}{1.49}\dl{2.66}{1.38}{2.67}{1.49}
\dl{2.82}{1.22}{2.55}{1.34}\dl{2.77}{1.27}{2.55}{1.34}\dl{2.73}{1.31}{2.55}{1.34}\dl{2.7}{1.34}{2.55}{1.34}\dl{2.68}{1.36}{2.55}{1.34}\dl{2.66}{1.38}{2.55}{1.34}
\dl{2.38}{1.18}{2.53}{0.91}\dl{2.43}{1.13}{2.53}{0.91}\dl{2.47}{1.09}{2.53}{0.91}\dl{2.5}{1.06}{2.53}{0.91}\dl{2.52}{1.04}{2.53}{0.91}\dl{2.54}{1.02}{2.53}{0.91}
\dl{2.38}{1.18}{2.65}{1.06}\dl{2.43}{1.13}{2.65}{1.06}\dl{2.47}{1.09}{2.65}{1.06}\dl{2.5}{1.06}{2.65}{1.06}\dl{2.52}{1.04}{2.65}{1.06}\dl{2.54}{1.02}{2.65}{1.06}
\dl{2.77}{0.5}{2.53}{0.68}\dl{2.7}{0.52}{2.53}{0.68}\dl{2.64}{0.54}{2.53}{0.68}\dl{2.6}{0.55}{2.53}{0.68}\dl{2.57}{0.56}{2.53}{0.68}\dl{2.55}{0.57}{2.53}{0.68}
\dl{2.77}{0.5}{2.47}{0.5}\dl{2.7}{0.52}{2.47}{0.5}\dl{2.64}{0.54}{2.47}{0.5}\dl{2.6}{0.55}{2.47}{0.5}\dl{2.57}{0.56}{2.47}{0.5}
\dl{2.55}{0.57}{2.47}{0.5}
\dl{2.43}{0.3}{2.67}{0.12}\dl{2.5}{0.28}{2.67}{0.12}\dl{2.56}{0.26}{2.67}{0.12}\dl{2.6}{0.25}{2.67}{0.12}\dl{2.63}{0.24}{2.67}{0.12}\dl{2.65}{0.23}{2.67}{0.12}
\dl{2.43}{0.3}{2.73}{0.3}\dl{2.5}{0.28}{2.73}{0.3}\dl{2.56}{0.26}{2.73}{0.3}\dl{2.6}{0.25}{2.73}{0.3}\dl{2.63}{0.24}{2.73}{0.3}
\dl{2.65}{0.23}{2.73}{0.3}
\dl{2.7}{-0.21}{2.46}{-0.39}\dl{2.63}{-0.23}{2.46}{-0.39}\dl{2.57}{-0.25}{2.46}{-0.39}\dl{2.53}{-0.26}{2.46}{-0.39}\dl{2.5}{-0.27}{2.46}{-0.39}\dl{2.48}{-0.28}{2.46}{-0.39}
\dl{2.7}{-0.21}{2.4}{-0.21}\dl{2.63}{-0.23}{2.4}{-0.21}\dl{2.57}{-0.25}{2.4}{-0.21}\dl{2.53}{-0.26}{2.4}{-0.21}\dl{2.5}{-0.27}{2.4}{-0.21}\dl{2.48}{-0.28}{2.4}{-0.21}
\dl{2.5}{-0.59}{2.74}{-0.41}\dl{2.57}{-0.57}{2.74}{-0.41}\dl{2.63}{-0.55}{2.74}{-0.41}\dl{2.67}{-0.54}{2.74}{-0.41}\dl{2.7}{-0.53}{2.74}{-0.41}\dl{2.72}{-0.52}{2.74}{-0.41}
\dl{2.49}{-0.59}{2.8}{-0.59}\dl{2.57}{-0.57}{2.8}{-0.59}\dl{2.63}{-0.55}{2.8}{-0.59}\dl{2.67}{-0.54}{2.8}{-0.59}\dl{2.7}{-0.53}{2.8}{-0.59}\dl{2.72}{-0.52}{2.8}{-0.59}
\dl{2.6}{-1.42}{2.75}{-1.15}\dl{2.64}{-1.38}{2.75}{-1.15}\dl{2.68}{-1.34}{2.75}{-1.15}\dl{2.71}{-1.31}{2.75}{-1.15}\dl{2.74}{-1.28}{2.75}{-1.15}\dl{2.76}{-1.26}{2.75}{-1.15}
\dl{2.6}{-1.42}{2.87}{-1.3}\dl{2.64}{-1.38}{2.87}{-1.3}\dl{2.68}{-1.34}{2.87}{-1.3}\dl{2.71}{-1.31}{2.87}{-1.3}\dl{2.74}{-1.28}{2.87}{-1.3}\dl{2.76}{-1.26}{2.87}{-1.3}
\dl{2.6}{-0.98}{2.45}{-1.25}\dl{2.56}{-1.02}{2.45}{-1.25}\dl{2.52}{-1.06}{2.45}{-1.25}\dl{2.49}{-1.09}{2.45}{-1.25}\dl{2.46}{-1.12}{2.45}{-1.25}\dl{2.44}{-1.14}{2.45}{-1.25}
\dl{2.6}{-0.98}{2.33}{-1.1}\dl{2.56}{-1.02}{2.33}{-1.1}\dl{2.52}{-1.06}{2.33}{-1.1}\dl{2.49}{-1.09}{2.33}{-1.1}\dl{2.46}{-1.12}{2.33}{-1.1}\dl{2.44}{-1.14}{2.33}{-1.1}\dl{2.44}{-1.14}{2.33}{-1.1}
%right bitriangle arrows
\dl{5.95}{0.13}{6.05}{-0.13}\dl{5.95}{0.06}{6.05}{-0.13}\dl{5.95}{0.01}{6.05}{-0.13}\dl{5.95}{-0.03}{6.05}{-0.13}\dl{5.95}{-0.05}{6.05}{-0.13}\dl{5.95}{-0.07}{6.05}{-0.13}
\dl{5.95}{0.13}{5.85}{-0.13}\dl{5.95}{0.06}{5.85}{-0.13}\dl{5.95}{0.01}{5.85}{-0.13}\dl{5.95}{-0.03}{5.85}{-0.13}\dl{5.95}{-0.05}{5.85}{-0.13}\dl{5.95}{-0.07}{5.85}{-0.13}
\dl{5.04}{0.88}{4.87}{0.67}\dl{5}{0.86}{4.87}{0.67}\dl{4.96}{0.84}{4.87}{0.67}\dl{4.92}{0.82}{4.87}{0.67}\dl{4.89}{0.8}{4.87}{0.67}\dl{4.87}{0.78}{4.87}{0.67}
\dl{5.04}{0.88}{4.77}{0.85}\dl{5}{0.86}{4.77}{0.85}\dl{4.96}{0.84}{4.77}{0.85}\dl{4.92}{0.82}{4.77}{0.85}\dl{4.89}{0.8}{4.77}{0.85}\dl{4.87}{0.78}{4.77}{0.85}
\dl{5.01}{0.42}{5.18}{0.63}\dl{5.05}{0.44}{5.18}{0.63}\dl{5.09}{0.46}{5.18}{0.63}\dl{5.13}{0.48}{5.18}{0.63}\dl{5.16}{0.5}{5.18}{0.63}\dl{5.18}{0.52}{5.18}{0.63}
\dl{5.01}{0.42}{5.28}{0.45}\dl{5.05}{0.44}{5.28}{0.45}\dl{5.09}{0.46}{5.28}{0.45}\dl{5.13}{0.48}{5.28}{0.45}\dl{5.16}{0.5}{5.28}{0.45}\dl{5.18}{0.52}{5.28}{0.45}
\dl{5.24}{-0.54}{5.07}{-0.33}\dl{5.2}{-0.52}{5.07}{-0.33}\dl{5.16}{-0.5}{5.07}{-0.33}\dl{5.12}{-0.48}{5.07}{-0.33}\dl{5.09}{-0.46}{5.07}{-0.33}\dl{5.07}{-0.44}{5.07}{-0.33}
\dl{5.24}{-0.54}{4.97}{-0.51}\dl{5.2}{-0.52}{4.97}{-0.51}\dl{5.16}{-0.5}{4.97}{-0.51}\dl{5.12}{-0.48}{4.97}{-0.51}\dl{5.09}{-0.46}{4.97}{-0.51}\dl{5.07}{-0.44}{4.97}{-0.51}
\dl{4.81}{-0.76}{4.98}{-0.97}\dl{4.85}{-0.78}{4.98}{-0.97}\dl{4.89}{-0.8}{4.98}{-0.97}\dl{4.93}{-0.82}{4.98}{-0.97}\dl{4.96}{-0.84}{4.98}{-0.97}\dl{4.98}{-0.86}{4.98}{-0.97}
\dl{4.81}{-0.76}{5.08}{-0.79}\dl{4.85}{-0.78}{5.08}{-0.79}\dl{4.89}{-0.8}{5.08}{-0.79}\dl{4.93}{-0.82}{5.08}{-0.79}\dl{4.96}{-0.84}{5.08}{-0.79}\dl{4.98}{-0.86}{5.08}{-0.79}
\dl{6.35}{-0.13}{6.45}{0.13}\dl{6.35}{-0.06}{6.45}{0.13}\dl{6.35}{-0.01}{6.45}{0.13}\dl{6.35}{0.03}{6.45}{0.13}\dl{6.35}{0.05}{6.45}{0.13}\dl{6.35}{0.07}{6.45}{0.13}
\dl{6.35}{-0.13}{6.25}{0.13}\dl{6.35}{-0.06}{6.25}{0.13}\dl{6.35}{-0.01}{6.25}{0.13}\dl{6.35}{0.03}{6.25}{0.13}\dl{6.35}{0.05}{6.25}{0.13}\dl{6.35}{0.07}{6.25}{0.13}
%rightmost arrows
\dl{7.31}{2.11}{7.05}{2.06}\dl{7.27}{2.08}{7.05}{2.06}\dl{7.24}{2.05}{7.05}{2.06}\dl{7.21}{2.03}{7.05}{2.06}\dl{7.18}{2.01}{7.05}{2.06}\dl{7.15}{2}{7.05}{2.06}
\dl{7.31}{2.11}{7.17}{1.89}\dl{7.27}{2.08}{7.17}{1.89}\dl{7.24}{2.05}{7.17}{1.89}\dl{7.21}{2.03}{7.17}{1.89}\dl{7.18}{2.01}{7.17}{1.89}\dl{7.15}{2}{7.17}{1.89}
\dl{7.26}{0.94}{7.52}{0.97}\dl{7.32}{0.92}{7.52}{0.97}\dl{7.36}{0.91}{7.52}{0.97}\dl{7.39}{0.9}{7.52}{0.97}\dl{7.42}{0.89}{7.52}{0.97}\dl{7.45}{0.88}{7.52}{0.97}
\dl{7.26}{0.94}{7.46}{0.77}\dl{7.32}{0.92}{7.46}{0.77}\dl{7.36}{0.91}{7.46}{0.77}\dl{7.39}{0.9}{7.46}{0.77}\dl{7.42}{0.89}{7.46}{0.77}\dl{7.45}{0.88}{7.46}{0.77}
\dl{8.44}{1.7}{8.53}{1.75}\dl{8.44}{1.68}{8.53}{1.75}\dl{8.45}{1.65}{8.53}{1.75}\dl{8.45}{1.62}{8.53}{1.75}\dl{8.46}{1.59}{8.53}{1.75}\dl{8.46}{1.55}{8.53}{1.75}\dl{8.47}{1.5}{8.53}{1.75}
\dl{8.44}{1.7}{8.33}{1.71}\dl{8.44}{1.68}{8.33}{1.71}\dl{8.45}{1.65}{8.33}{1.71}\dl{8.45}{1.62}{8.33}{1.71}\dl{8.46}{1.59}{8.33}{1.71}\dl{8.46}{1.55}{8.33}{1.71}\dl{8.47}{1.5}{8.33}{1.71}
\dl{7.12}{-1.98}{7.38}{-2.03}\dl{7.16}{-2.01}{7.38}{-2.03}\dl{7.19}{-2.04}{7.38}{-2.03}\dl{7.22}{-2.06}{7.38}{-2.03}\dl{7.25}{-2.08}{7.38}{-2.03}\dl{7.28}{-2.09}{7.38}{-2.03}
\dl{7.12}{-1.98}{7.26}{-2.2}\dl{7.16}{-2.01}{7.26}{-2.2}\dl{7.19}{-2.04}{7.26}{-2.2}\dl{7.22}{-2.06}{7.26}{-2.2}\dl{7.25}{-2.08}{7.26}{-2.2}\dl{7.28}{-2.09}{7.26}{-2.2}
\dl{7.51}{-0.86}{7.25}{-0.83}\dl{7.45}{-0.88}{7.25}{-0.83}\dl{7.41}{-0.89}{7.25}{-0.83}\dl{7.38}{-0.9}{7.25}{-0.83}\dl{7.35}{-0.91}{7.25}{-0.83}\dl{7.32}{-0.92}{7.25}{-0.83}
\dl{7.51}{-0.86}{7.31}{-1.03}\dl{7.45}{-0.88}{7.31}{-1.03}\dl{7.41}{-0.89}{7.31}{-1.03}\dl{7.38}{-0.9}{7.31}{-1.03}\dl{7.35}{-0.91}{7.31}{-1.03}\dl{7.32}{-0.92}{7.31}{-1.03}
\dl{8.46}{-1.58}{8.37}{-1.53}\dl{8.46}{-1.6}{8.37}{-1.53}\dl{8.45}{-1.63}{8.37}{-1.53}\dl{8.45}{-1.66}{8.37}{-1.53}\dl{8.44}{-1.69}{8.37}{-1.53}\dl{8.44}{-1.73}{8.37}{-1.53}\dl{8.43}{-1.78}{8.37}{-1.53}
\dl{8.46}{-1.58}{8.57}{-1.57}\dl{8.46}{-1.6}{8.57}{-1.57}\dl{8.45}{-1.63}{8.57}{-1.57}\dl{8.45}{-1.66}{8.57}{-1.57}\dl{8.44}{-1.69}{8.57}{-1.57}\dl{8.44}{-1.73}{8.57}{-1.57}\dl{8.43}{-1.78}{8.57}{-1.57}
%left 8 arrows
\dl{-2.82}{-1.22}{-2.67}{-1.49}\dl{-2.77}{-1.27}{-2.67}{-1.49}\dl{-2.73}{-1.31}{-2.67}{-1.49}\dl{-2.7}{-1.34}{-2.67}{-1.49}\dl{-2.68}{-1.36}{-2.67}{-1.49}\dl{-2.66}{-1.38}{-2.67}{-1.49}
\dl{-2.82}{-1.22}{-2.55}{-1.34}\dl{-2.77}{-1.27}{-2.55}{-1.34}\dl{-2.73}{-1.31}{-2.55}{-1.34}\dl{-2.7}{-1.34}{-2.55}{-1.34}\dl{-2.68}{-1.36}{-2.55}{-1.34}\dl{-2.66}{-1.38}{-2.55}{-1.34}
\dl{-2.38}{-1.18}{-2.53}{-0.91}\dl{-2.43}{-1.13}{-2.53}{-0.91}\dl{-2.47}{-1.09}{-2.53}{-0.91}\dl{-2.5}{-1.06}{-2.53}{-0.91}\dl{-2.52}{-1.04}{-2.53}{-0.91}\dl{-2.54}{-1.02}{-2.53}{-0.91}
\dl{-2.38}{-1.18}{-2.65}{-1.06}\dl{-2.43}{-1.13}{-2.65}{-1.06}\dl{-2.47}{-1.09}{-2.65}{-1.06}\dl{-2.5}{-1.06}{-2.65}{-1.06}\dl{-2.52}{-1.04}{-2.65}{-1.06}\dl{-2.54}{-1.02}{-2.65}{-1.06}
\dl{-2.77}{-0.5}{-2.53}{-0.68}\dl{-2.7}{-0.52}{-2.53}{-0.68}\dl{-2.64}{-0.54}{-2.53}{-0.68}\dl{-2.6}{-0.55}{-2.53}{-0.68}\dl{-2.57}{-0.56}{-2.53}{-0.68}\dl{-2.55}{-0.57}{-2.53}{-0.68}
\dl{-2.77}{-0.5}{-2.47}{-0.5}\dl{-2.7}{-0.52}{-2.47}{-0.5}\dl{-2.64}{-0.54}{-2.47}{-0.5}\dl{-2.6}{-0.55}{-2.47}{-0.5}\dl{-2.57}{-0.56}{-2.47}{-0.5}\dl{-2.55}{-0.57}{-2.47}{-0.5}
\dl{-2.43}{-0.3}{-2.67}{-0.12}\dl{-2.5}{-0.28}{-2.67}{-0.12}\dl{-2.56}{-0.26}{-2.67}{-0.12}\dl{-2.6}{-0.25}{-2.67}{-0.12}\dl{-2.63}{-0.24}{-2.67}{-0.12}\dl{-2.65}{-0.23}{-2.67}{-0.12}
\dl{-2.43}{-0.3}{-2.73}{-0.3}\dl{-2.5}{-0.28}{-2.73}{-0.3}\dl{-2.56}{-0.26}{-2.73}{-0.3}\dl{-2.6}{-0.25}{-2.73}{-0.3}\dl{-2.63}{-0.24}{-2.73}{-0.3}\dl{-2.65}{-0.23}{-2.73}{-0.3}
\dl{-2.7}{0.21}{-2.46}{0.39}\dl{-2.63}{0.23}{-2.46}{0.39}\dl{-2.57}{0.25}{-2.46}{0.39}\dl{-2.53}{0.26}{-2.46}{0.39}\dl{-2.5}{0.27}{-2.46}{0.39}\dl{-2.48}{0.28}{-2.46}{0.39}
\dl{-2.7}{0.21}{-2.4}{0.21}\dl{-2.63}{0.23}{-2.4}{0.21}\dl{-2.57}{0.25}{-2.4}{0.21}\dl{-2.53}{0.26}{-2.4}{0.21}\dl{-2.5}{0.27}{-2.4}{0.21}\dl{-2.48}{0.28}{-2.4}{0.21}
\dl{-2.5}{0.59}{-2.74}{0.41}\dl{-2.57}{0.57}{-2.74}{0.41}\dl{-2.63}{0.55}{-2.74}{0.41}\dl{-2.67}{0.54}{-2.74}{0.41}\dl{-2.7}{0.53}{-2.74}{0.41}\dl{-2.72}{0.52}{-2.74}{0.41}
\dl{-2.49}{0.59}{-2.8}{0.59}\dl{-2.57}{0.57}{-2.8}{0.59}\dl{-2.63}{0.55}{-2.8}{0.59}\dl{-2.67}{0.54}{-2.8}{0.59}\dl{-2.7}{0.53}{-2.8}{0.59}\dl{-2.72}{0.52}{-2.8}{0.59}
\dl{-2.6}{1.42}{-2.75}{1.15}\dl{-2.64}{1.38}{-2.75}{1.15}\dl{-2.68}{1.34}{-2.75}{1.15}\dl{-2.71}{1.31}{-2.75}{1.15}\dl{-2.74}{1.28}{-2.75}{1.15}\dl{-2.76}{1.26}{-2.75}{1.15}
\dl{-2.6}{1.42}{-2.87}{1.3}\dl{-2.64}{1.38}{-2.87}{1.3}\dl{-2.68}{1.34}{-2.87}{1.3}\dl{-2.71}{1.31}{-2.87}{1.3}\dl{-2.74}{1.28}{-2.87}{1.3}\dl{-2.76}{1.26}{-2.87}{1.3}
\dl{-2.6}{0.98}{-2.45}{1.25}\dl{-2.56}{1.02}{-2.45}{1.25}\dl{-2.52}{1.06}{-2.45}{1.25}\dl{-2.49}{1.09}{-2.45}{1.25}\dl{-2.46}{1.12}{-2.45}{1.25}\dl{-2.44}{1.14}{-2.45}{1.25}
\dl{-2.6}{0.98}{-2.33}{1.1}\dl{-2.56}{1.02}{-2.33}{1.1}\dl{-2.52}{1.06}{-2.33}{1.1}\dl{-2.49}{1.09}{-2.33}{1.1}\dl{-2.46}{1.12}{-2.33}{1.1}\dl{-2.44}{1.14}{-2.33}{1.1}\dl{-2.44}{1.14}{-2.33}{1.1}
%left bitriangle arrows
\dl{-5.95}{-0.13}{-6.05}{0.13}\dl{-5.95}{-0.06}{-6.05}{0.13}\dl{-5.95}{-0.01}{-6.05}{0.13}\dl{-5.95}{0.03}{-6.05}{0.13}\dl{-5.95}{0.05}{-6.05}{0.13}\dl{-5.95}{0.07}{-6.05}{0.13}
\dl{-5.95}{-0.13}{-5.85}{0.13}\dl{-5.95}{-0.06}{-5.85}{0.13}\dl{-5.95}{-0.01}{-5.85}{0.13}\dl{-5.95}{0.03}{-5.85}{0.13}\dl{-5.95}{0.05}{-5.85}{0.13}\dl{-5.95}{0.07}{-5.85}{0.13}
\dl{-5.04}{-0.88}{-4.87}{-0.67}\dl{-5}{-0.86}{-4.87}{-0.67}\dl{-4.96}{-0.84}{-4.87}{-0.67}\dl{-4.92}{-0.82}{-4.87}{-0.67}\dl{-4.89}{-0.8}{-4.87}{-0.67}\dl{-4.87}{-0.78}{-4.87}{-0.67}
\dl{-5.04}{-0.88}{-4.77}{-0.85}\dl{-5}{-0.86}{-4.77}{-0.85}\dl{-4.96}{-0.84}{-4.77}{-0.85}\dl{-4.92}{-0.82}{-4.77}{-0.85}\dl{-4.89}{-0.8}{-4.77}{-0.85}\dl{-4.87}{-0.78}{-4.77}{-0.85}
\dl{-5.01}{-0.42}{-5.18}{-0.63}\dl{-5.05}{-0.44}{-5.18}{-0.63}\dl{-5.09}{-0.46}{-5.18}{-0.63}\dl{-5.13}{-0.48}{-5.18}{-0.63}\dl{-5.16}{-0.5}{-5.18}{-0.63}\dl{-5.18}{-0.52}{-5.18}{-0.63}
\dl{-5.01}{-0.42}{-5.28}{-0.45}\dl{-5.05}{-0.44}{-5.28}{-0.45}\dl{-5.09}{-0.46}{-5.28}{-0.45}\dl{-5.13}{-0.48}{-5.28}{-0.45}\dl{-5.16}{-0.5}{-5.28}{-0.45}\dl{-5.18}{-0.52}{-5.28}{-0.45}
\dl{-5.24}{0.54}{-5.07}{0.33}\dl{-5.2}{0.52}{-5.07}{0.33}\dl{-5.16}{0.5}{-5.07}{0.33}\dl{-5.12}{0.48}{-5.07}{0.33}\dl{-5.09}{0.46}{-5.07}{0.33}\dl{-5.07}{0.44}{-5.07}{0.33}
\dl{-5.24}{0.54}{-4.97}{0.51}\dl{-5.2}{0.52}{-4.97}{0.51}\dl{-5.16}{0.5}{-4.97}{0.51}\dl{-5.12}{0.48}{-4.97}{0.51}\dl{-5.09}{0.46}{-4.97}{0.51}\dl{-5.07}{0.44}{-4.97}{0.51}
\dl{-4.81}{0.76}{-4.98}{0.97}\dl{-4.85}{0.78}{-4.98}{0.97}\dl{-4.89}{0.8}{-4.98}{0.97}\dl{-4.93}{0.82}{-4.98}{0.97}\dl{-4.96}{0.84}{-4.98}{0.97}\dl{-4.98}{0.86}{-4.98}{0.97}
\dl{-4.81}{0.76}{-5.08}{0.79}\dl{-4.85}{0.78}{-5.08}{0.79}\dl{-4.89}{0.8}{-5.08}{0.79}\dl{-4.93}{0.82}{-5.08}{0.79}\dl{-4.96}{0.84}{-5.08}{0.79}\dl{-4.98}{0.86}{-5.08}{0.79}
\dl{-6.35}{0.13}{-6.45}{-0.13}\dl{-6.35}{0.06}{-6.45}{-0.13}\dl{-6.35}{0.01}{-6.45}{-0.13}\dl{-6.35}{-0.03}{-6.45}{-0.13}\dl{-6.35}{-0.05}{-6.45}{-0.13}\dl{-6.35}{-0.07}{-6.45}{-0.13}
\dl{-6.35}{0.13}{-6.25}{-0.13}\dl{-6.35}{0.06}{-6.25}{-0.13}\dl{-6.35}{0.01}{-6.25}{-0.13}\dl{-6.35}{-0.03}{-6.25}{-0.13}\dl{-6.35}{-0.05}{-6.25}{-0.13}\dl{-6.35}{-0.07}{-6.25}{-0.13}
%leftmost arrows
\dl{-7.31}{-2.11}{-7.05}{-2.06}\dl{-7.27}{-2.08}{-7.05}{-2.06}\dl{-7.24}{-2.05}{-7.05}{-2.06}\dl{-7.21}{-2.03}{-7.05}{-2.06}\dl{-7.18}{-2.01}{-7.05}{-2.06}\dl{-7.15}{-2}{-7.05}{-2.06}
\dl{-7.31}{-2.11}{-7.17}{-1.89}\dl{-7.27}{-2.08}{-7.17}{-1.89}\dl{-7.24}{-2.05}{-7.17}{-1.89}\dl{-7.21}{-2.03}{-7.17}{-1.89}\dl{-7.18}{-2.01}{-7.17}{-1.89}\dl{-7.15}{-2}{-7.17}{-1.89}
\dl{-7.26}{-0.94}{-7.52}{-0.97}\dl{-7.32}{-0.92}{-7.52}{-0.97}\dl{-7.36}{-0.91}{-7.52}{-0.97}\dl{-7.39}{-0.9}{-7.52}{-0.97}\dl{-7.42}{-0.89}{-7.52}{-0.97}\dl{-7.45}{-0.88}{-7.52}{-0.97}
\dl{-7.26}{-0.94}{-7.46}{-0.77}\dl{-7.32}{-0.92}{-7.46}{-0.77}\dl{-7.36}{-0.91}{-7.46}{-0.77}\dl{-7.39}{-0.9}{-7.46}{-0.77}\dl{-7.42}{-0.89}{-7.46}{-0.77}\dl{-7.45}{-0.88}{-7.46}{-0.77}
\dl{-8.44}{-1.7}{-8.53}{-1.75}\dl{-8.44}{-1.68}{-8.53}{-1.75}\dl{-8.45}{-1.65}{-8.53}{-1.75}\dl{-8.45}{-1.62}{-8.53}{-1.75}\dl{-8.46}{-1.59}{-8.53}{-1.75}\dl{-8.46}{-1.55}{-8.53}{-1.75}\dl{-8.47}{-1.5}{-8.53}{-1.75}
\dl{-8.44}{-1.7}{-8.33}{-1.71}\dl{-8.44}{-1.68}{-8.33}{-1.71}\dl{-8.45}{-1.65}{-8.33}{-1.71}\dl{-8.45}{-1.62}{-8.33}{-1.71}\dl{-8.46}{-1.59}{-8.33}{-1.71}\dl{-8.46}{-1.55}{-8.33}{-1.71}\dl{-8.47}{-1.5}{-8.33}{-1.71}
\dl{-7.12}{1.98}{-7.38}{2.03}\dl{-7.16}{2.01}{-7.38}{2.03}\dl{-7.19}{2.04}{-7.38}{2.03}\dl{-7.22}{2.06}{-7.38}{2.03}\dl{-7.25}{2.08}{-7.38}{2.03}\dl{-7.28}{2.09}{-7.38}{2.03}
\dl{-7.12}{1.98}{-7.26}{2.2}\dl{-7.16}{2.01}{-7.26}{2.2}\dl{-7.19}{2.04}{-7.26}{2.2}\dl{-7.22}{2.06}{-7.26}{2.2}\dl{-7.25}{2.08}{-7.26}{2.2}\dl{-7.28}{2.09}{-7.26}{2.2}
\dl{-7.51}{0.86}{-7.25}{0.83}\dl{-7.45}{0.88}{-7.25}{0.83}\dl{-7.41}{0.89}{-7.25}{0.83}\dl{-7.38}{0.9}{-7.25}{0.83}\dl{-7.35}{0.91}{-7.25}{0.83}\dl{-7.32}{0.92}{-7.25}{0.83}
\dl{-7.51}{0.86}{-7.31}{1.03}\dl{-7.45}{0.88}{-7.31}{1.03}\dl{-7.41}{0.89}{-7.31}{1.03}\dl{-7.38}{0.9}{-7.31}{1.03}\dl{-7.35}{0.91}{-7.31}{1.03}\dl{-7.32}{0.92}{-7.31}{1.03}
\dl{-8.46}{1.58}{-8.37}{1.53}\dl{-8.46}{1.6}{-8.37}{1.53}\dl{-8.45}{1.63}{-8.37}{1.53}\dl{-8.45}{1.66}{-8.37}{1.53}\dl{-8.44}{1.69}{-8.37}{1.53}\dl{-8.44}{1.73}{-8.37}{1.53}\dl{-8.43}{1.78}{-8.37}{1.53}
\dl{-8.46}{1.58}{-8.57}{1.57}\dl{-8.46}{1.6}{-8.57}{1.57}\dl{-8.45}{1.63}{-8.57}{1.57}\dl{-8.45}{1.66}{-8.57}{1.57}\dl{-8.44}{1.69}{-8.57}{1.57}\dl{-8.44}{1.73}{-8.57}{1.57}\dl{-8.43}{1.78}{-8.57}{1.57}
%dotted arrows
%\dl{1.95}{3.1}{2.26}{3.1}\dl{2}{3.08}{2.26}{3.1}\dl{2.04}{3.06}{2.26}{3.1}\dl{2.07}{3.04}{2.26}{3.1}\dl{2.1}{3.03}{2.26}{3.1}\dl{2.13}{3.02}{2.26}{3.1}
%\dl{1.95}{3.1}{2.17}{2.88}\dl{2}{3.08}{2.17}{2.88}\dl{2.04}{3.06}{2.17}{2.88}\dl{2.07}{3.04}{2.17}{2.88}\dl{2.1}{3.03}{2.17}{2.88}\dl{2.13}{3.02}{2.17}{2.88}
%\dl{1.95}{3.1}{2.13}{3.02}
%
\dl{-1.95}{-3.1}{-2.26}{-3.1}\dl{-2}{-3.08}{-2.26}{-3.1}\dl{-2.04}{-3.06}{-2.26}{-3.1}\dl{-2.07}{-3.04}{-2.26}{-3.1}\dl{-2.1}{-3.03}{-2.26}{-3.1}\dl{-2.13}{-3.02}{-2.26}{-3.1}
\dl{-1.95}{-3.1}{-2.17}{-2.88}\dl{-2}{-3.08}{-2.17}{-2.88}\dl{-2.04}{-3.06}{-2.17}{-2.88}\dl{-2.07}{-3.04}{-2.17}{-2.88}\dl{-2.1}{-3.03}{-2.17}{-2.88}\dl{-2.13}{-3.02}{-2.17}{-2.88}
\dl{-1.95}{-3.1}{-2.13}{-3.02}
%vertices labels
\point{-9.35}{0.37}{\small $u_1$}\point{-9.07}{-2.9}{\small $u_2$}\point{8.87}{-0.62}{\small $u_4$}\point{8.52}{2.67}{\small $u_3$}
\point{-8.97}{2.67}{\small $v_1$}\point{-9.35}{-0.62}{\small $v_2$}\point{8.52}{-2.9}{\small $v_4$}\point{8.87}{0.37}{\small $v_3$}
\point{-6.3}{1.6}{\small $w_1$}\point{-6.3}{-1.8}{\small $w_2$}\point{5.6}{-1.8}{\small $w_4$}\point{5.6}{1.6}{\small $w_3$}
\point{-4.06}{0.38}{\small $x$}%\point{-1.42}{2.68}{\small $t$}
\point{3.77}{0.38}{\small $y$}\point{1.17}{-2.9}{\small $z$}
%colours
\ptll{-6.6}{1.37}{1}\ptll{-6.6}{-1.45}{2}\ptld{-4.05}{-0.27}{5}\ptlr{6.6}{-1.45}{4}\ptlr{6.6}{1.37}{3}\ptld{3.9}{-0.15}{6}
\ptld{-1.23}{2.3}{3}\ptld{-1.3}{0.7}{4}\ptlu{-1.23}{-2.37}{4}\ptlu{-1.3}{-0.77}{3}
\ptld{1.23}{2.3}{1}\ptld{1.3}{0.7}{1}\ptlu{1.23}{-2.37}{2}\ptlu{1.3}{-0.77}{2}
\ptlu{-8.31}{0.53}{7}\ptlu{-8.11}{-2.68}{7}
\ptld{-8.11}{2.65}{8}\ptld{-8.31}{-0.63}{8}
\ptld{8.31}{-0.65}{7}\ptld{8.11}{2.6}{7}
\ptlu{8.11}{-2.65}{8}\ptlu{8.31}{0.56}{8}
\ptlu{-10.7}{2.2}{\textup{\normalsize (b)}}
\ptlu{0}{-5.2}{\textup{\small Figure 1. The digraphs $D_1,H_1,D_2$ and $H_2$ in Lemma \ref{lem5}.}}
\]\\[-1ex]
\indent
\end{proof}

In \cite{KY2010}, Krivelevich and Yuster observed that for $rc(G)$ and $rvc(G)$, we cannot generally find an upper bound for one of the parameters in terms of the other. Indeed, let $s\ge 2$. By taking $G=K_{1,s}$, we have $rc(G) = s$ and $rvc(G) = 1$. On the other hand, let $G$ be constructed as follows. Take $s$ vertex-disjoint triangles and, by designating a vertex from each triangle, add a complete graph $K_s$ on the designated vertices. Then $rc(G) \le 4$ and $rvc(G) = s$. The following lemma shows that the same is true about the functions $\overset{\rightarrow}{rc}(D)$ and $\overset{\rightarrow}{rvc}(D)$, and the functions $\overset{\rightarrow}{src}(D)$ and $\overset{\rightarrow}{srvc}(D)$.

\begin{lem}
\indent\\[-3ex]
\begin{enumerate}
\item[(a)] Given $s\ge 4$, there exists a strongly connected digraph $D$ such that $\overset{\rightarrow}{rc}(D)=\overset{\rightarrow}{src}(D)\ge s$ and $\overset{\rightarrow}{rvc}(D)=\overset{\rightarrow}{srvc}(D)=3$.
\item[(b)] Given $s\ge 2$, there exists a strongly connected digraph $D$ such that $\overset{\rightarrow}{rc}(D)=\overset{\rightarrow}{src}(D)=3$ and $\overset{\rightarrow}{rvc}(D)=\overset{\rightarrow}{srvc}(D)=s$.
\end{enumerate}
\end{lem}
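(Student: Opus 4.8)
The plan is to treat the two parts separately with two explicit constructions, the point being that part (a) needs a genuinely asymmetric digraph, in order to inflate the arc parameters while keeping the diameter (and hence the vertex parameters) bounded, whereas part (b) is most cleanly handled by a biorientation, so that Proposition \ref{pro3} can be invoked directly for the vertex parameters.

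For (a), I would take the digraph $D$ obtained from $s-2$ directed triangles sharing a single common vertex $c$: for $1\le i\le s-2$, triangle $i$ consists of the arcs $c\to a_i$, $a_i\to b_i$ and $b_i\to c$. Since $s\ge 4$ there are at least two triangles, and one checks at once that $D$ is strongly connected with $\diam(D)=4$, a longest geodesic being $a_i\to b_i\to c\to a_j\to b_j$ for $i\ne j$. For the vertex parameters, the colouring giving $c$ one colour and all the $a_i$ (resp.\ all the $b_i$) a second (resp.\ third) colour makes every such geodesic rainbow, so $\overset{\rightarrow}{srvc}(D)\le 3$; together with $\overset{\rightarrow}{rvc}(D)\ge\diam(D)-1=3$ from (\ref{pro1eq}) this yields $\overset{\rightarrow}{rvc}(D)=\overset{\rightarrow}{srvc}(D)=3$. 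The arc parameters are where the work lies. The crucial structural observation is that the only vertex with out-degree exceeding $1$ is $c$, so between any ordered pair of vertices there is a \emph{unique} directed path; consequently $\overset{\rightarrow}{rc}(D)=\overset{\rightarrow}{src}(D)$, and a colouring is (strongly) rainbow connected exactly when each of these unique paths is rainbow. Applying this to the path $a_i\to b_i\to c\to a_j\to b_j$ forces the $s-2$ arcs $a_ib_i$ to receive pairwise distinct colours, and then examining the short paths through $c$ shows that the arcs $c\to a_i$ and the arcs $b_i\to c$ cannot reuse any of these colours and require two further colours between them; this gives $\overset{\rightarrow}{rc}(D)=\overset{\rightarrow}{src}(D)=s$.

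For (b), I would let $G$ be the graph obtained from a complete graph $K_s$ on $w_1,\dots,w_s$ by attaching a pendant vertex $v_i$ to each $w_i$, and set $D=\lrG$. By Proposition \ref{pro3} we have $\overset{\rightarrow}{rvc}(D)=rvc(G)$ and $\overset{\rightarrow}{srvc}(D)=srvc(G)$, so it suffices to show $rvc(G)=srvc(G)=s$. The lower bound is the key point: since each $v_i$ has degree $1$, every $v_i$--$v_j$ path must contain both $w_i$ and $w_j$ as internal vertices, so any rainbow-vertex-connected colouring must give the $w_i$ pairwise distinct colours; colouring $w_i$ by $i$ (the pendants are never internal) shows $s$ colours suffice, and in fact for geodesics as well. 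For the arc parameters, $\diam(\lrG)=3$ gives $\overset{\rightarrow}{rc}(D)\ge 3$, and I would colour every arc $v_i\to w_i$ with colour $1$, every arc $w_i\to v_i$ with colour $2$, and every arc of the bidirected clique with colour $3$; checking the short list of ordered-pair types shows that each has a rainbow geodesic, so $\overset{\rightarrow}{rc}(D)=\overset{\rightarrow}{src}(D)=3$.

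The main obstacle is the arc-colouring lower bound in (a): both the observation that all directed paths are unique, which is what forces $\overset{\rightarrow}{rc}=\overset{\rightarrow}{src}$ and lets one argue via single paths rather than among competing ones, and the bookkeeping showing that the spoke arcs at $c$ genuinely need two colours beyond the $s-2$ used on the arcs $a_ib_i$. Everything else reduces to verifying the explicit colourings against a short case list of ordered pairs, together with the degree-$1$ argument in (b); the hypotheses $s\ge 4$ in (a) and $s\ge 2$ in (b) are exactly what is needed to guarantee the required diameters and the existence of two distinct triangles, respectively pendants.
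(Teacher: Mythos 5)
Your proposal is correct, and part (b) is precisely the paper's own argument: the same graph ($K_s$ plus a pendant vertex at each clique vertex, bioriented), the same appeal to the vertex-parameter value $s$, and the same three-colour arc-colouring (pendant-in, pendant-out, clique arcs) giving $\overset{\rightarrow}{rc}(D)=\overset{\rightarrow}{src}(D)=3$; you merely spell out the degree-one lower-bound argument that the paper dismisses as easy. Part (a) uses the same family of digraphs — directed triangles through a common hub — but a genuinely different lower-bound argument for the arc parameters. The paper takes $t=\binom{s-1}{3}+1$ triangles and argues by pigeonhole: with at most $s-1$ colours each triangle must be rainbow, so two triangles use the same $3$-set of colours, and then the unique length-$4$ path from one triangle to the other cannot be rainbow, giving $\overset{\rightarrow}{rc}(D)\ge s$. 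You instead take only $s-2$ triangles and extract the full constraint structure from the unique paths: the arcs $a_ib_i$ must get pairwise distinct colours, the spoke arcs $b_ic$ and $ca_j$ must avoid all of those colours, and (via the unique $b_i$--$a_j$ paths) the in-spokes and out-spokes must avoid each other, forcing at least $(s-2)+2=s$ colours; the matching colouring (one colour per $a_ib_i$, one colour for all $b_ic$, one for all $ca_j$) shows this is exact. Both proofs rest on the same observation that all paths are unique, which equates the strong and non-strong parameters. Your version buys a much smaller digraph ($2s-3$ vertices versus $2\binom{s-1}{3}+3$) and the exact value $\overset{\rightarrow}{rc}(D)=\overset{\rightarrow}{src}(D)=s$ rather than just the inequality $\ge s$ demanded by the statement; the paper's pigeonhole argument is quicker to write down but is wasteful in the size of the construction and yields only the lower bound.
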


\begin{proof}
(a) Let $D$ be the digraph consisting of $t={s-1 \choose 3}+1$ copies of the triangle $\rC_3$, all having one vertex in common. Let $V(D)=\{v,x_1,y_1,\dots,x_t,y_t\}$, and let the arcs of $A(D)$ be $vx_i,x_iy_i,y_iv$, for $1\le i\le t$. Suppose that there is a rainbow connected arc-colouring of $D$ with at most $s-1$ colours. Then clearly, every triangle $vx_iy_i$ must use three different colours. Moreover, some two triangles, say $vx_1y_1$ and $vx_2y_2$, must use the same three colours. Now, $x_1y_1vx_2y_2$ is the unique $x_1-y_2$ path, which has length $4$, and so cannot be rainbow, a contradiction. Hence, $\overset{\rightarrow}{rc}(D)\ge s$. Clearly, the vertex-colouring $c$ of $D$ where $c(v)=1$, $c(x_i)=2$ and $c(y_i)=3$ for $1\le i\le t$, is rainbow vertex-connected, and since diam$(D)=4$, we have $\overset{\rightarrow}{rvc}(D)=3$ by Proposition \ref{pro1}. Finally, we have   $\overset{\rightarrow}{rc}(D)=\overset{\rightarrow}{src}(D)$ and  $\overset{\rightarrow}{rvc}(D)=\overset{\rightarrow}{srvc}(D)$, since for any two vertices $a,b\in V(D)$, there is a unique $a-b$ path.\\[1ex]
%\indent (b) Let $G$ be the aforementioned graph with $K_s$ and the $s$ disjoint triangles, and let $D=\lrG$. Then by Propositions \ref{pro1} and \ref{pro3}, we have $\overset{\rightarrow}{srvc}(D)\ge\overset{\rightarrow}{rvc}(D)=rvc(G)=s$. Now in $D$, let $v_1,\dots, v_s$ be the vertices of the $\lrK_s$, and $x_i,y_i$ be the other two vertices of the triangle containing $v_i$, for $1\le i\le s$. Clearly, the arc-colouring $c$ of $D$ where $c(v_ix_i)=c(v_iy_i)=1$, $c(x_iv_i)=c(y_iv_i)=2$, and $c(x_iy_i)=c(y_ix_i)=c(v_iv_j)=3$, for all $1\le i\neq j\le s$, is strongly rainbow connected, so that $\overset{\rightarrow}{rc}(D)\le\overset{\rightarrow}{src}(D)\le 3$. Since diam$(D)=3$, we have $\overset{\rightarrow}{rc}(D)=\overset{\rightarrow}{src}(D)=3$.
\indent(b) Consider the graph $G$ by taking a copy of $K_s$, and adding a pendent edge to each vertex. Let $D=\lrG$. It is easy to see that $\overset{\rightarrow}{rvc}(D)=\overset{\rightarrow}{srvc}(D)=s$. Now in $D$, let $u_1,\dots, u_s$ be the vertices of the $\lrK_s$, and $v_i$ be the vertex where $u_iv_i,v_iu_i\in A(D)$, for $1\le i\le s$. Clearly, the arc-colouring $c$ of $D$ where $c(u_iv_i)=1$, $c(v_iu_i)=2$, and $c(u_iu_j)=3$, for all $1\le i\neq j\le s$, is strongly rainbow connected, so that $\diam(D)\le \overset{\rightarrow}{rc}(D)\le\overset{\rightarrow}{src}(D)\le 3$. Since $\diam(D)=3$, we have $\overset{\rightarrow}{rc}(D)=\overset{\rightarrow}{src}(D)=3$.
\end{proof}

\section{Rainbow connection of some specific digraphs}\label{specsect}

In this section, we shall determine the (strong) rainbow vertex-connection numbers of some specific digraphs. For $n\ge 3$, the \emph{wheel} $W_n$ is the graph obtained by taking the cycle $C_n$, and joining a new vertex $v$ to every vertex of $C_n$. For $t \ge 2$, let $K_{n_1,\dots,n_t}$ denote the complete $t$-partite graph with class-sizes $n_1,\dots,n_t$. The following theorem determines the two parameters for the biorientations of paths, cycles, wheels, and complete multipartite graphs.

\begin{thm}\label{biorthm}
\indent\\[-3ex]
\begin{enumerate}
\item[(a)] For $n \ge 2$, $\overset{\rightarrow}{rvc}(\lrP_n)=\overset{\rightarrow}{srvc}(\lrP_n) = n-2$.
\item[(b)] We have
\[
\overset{\rightarrow}{rvc}(\lrC_n)=\overset{\rightarrow}{srvc}(\lrC_n)=
\left\{
\begin{array}{ll}
\lceil\frac{n}{2}\rceil-2 & \textup{\emph{if} }n=3,5,9\textup{\emph{;}}\\[1ex]
\lceil\frac{n}{2}\rceil-1 & \textup{\emph{if} }n=4,6,7,8,10,12\textup{\emph{;}}\\[1ex]
\lceil\frac{n}{2}\rceil & \textup{\emph{if} }n=14\textup{\emph{ or }}n\ge 16.
\end{array}
\right.
\]
Also, $\overset{\rightarrow}{rvc}(\lrC_n)=\lceil\frac{n}{2}\rceil-1$ and $\overset{\rightarrow}{srvc}(\lrC_n)=\lceil\frac{n}{2}\rceil$ for $n=11,13,15$.
\item[(c)] For $n \ge 4$, $\overset{\rightarrow}{rvc}(\lrWn)=\overset{\rightarrow}{srvc}(\lrWn) = 1$.
\item[(d)] Let $t \ge 2$, and let $K_{n_1,\dots,n_t}$ be a complete $t$-partite graph with $n_i \ge 2$ for some $i$. Then, $\overset{\rightarrow}{rvc}(\lrK_{n_1,\dots,n_t})=\overset{\rightarrow}{srvc}(\lrK_{n_1,\dots,n_t}) = 1$.
\end{enumerate}
\end{thm}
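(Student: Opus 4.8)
The unifying device is Proposition~\ref{pro3}: since $rvc(G)=\overset{\rightarrow}{rvc}(\lrG)$ and $srvc(G)=\overset{\rightarrow}{srvc}(\lrG)$ for every connected graph $G$, each of (a)--(d) reduces to computing the \emph{undirected} parameters of $P_n$, $C_n$, $W_n$ and $K_{n_1,\dots,n_t}$. Equivalently, I will use that $\diam(\lrG)=\diam(G)$ and invoke the basic results of Section~\ref{gensect} directly on the biorientations. Three of the four parts then fall out quickly.

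For (a) it suffices, by Proposition~\ref{pro3}, to note that the two endpoints of $P_n$ are joined by a unique path, whose $n-2$ internal vertices must receive distinct colours, and that colouring them distinctly makes this (geodesic) path rainbow; hence $rvc(P_n)=srvc(P_n)=n-2$. Alternatively one sees directly that $\diam(\lrP_n)=n-1$, so Proposition~\ref{pro1} gives the lower bound $n-2$ and the distinct colouring gives the upper bound. For (c) and (d) I would simply verify that the biorientation has diameter $2$: in $\lrWn$ with $n\ge4$ any two rim vertices are joined through the hub while two adjacent rim vertices are non-adjacent to force $\diam\neq1$, and in $\lrK_{n_1,\dots,n_t}$ with some $n_i\ge2$ two vertices of a common part are joined through any third part (which exists since $t\ge2$) yet are non-adjacent. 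By Theorem~\ref{pro2}(b)(i), $\diam=2$ yields $\overset{\rightarrow}{rvc}=\overset{\rightarrow}{srvc}=1$ in both cases.

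All the genuine work is in (b), and here I again pass to the undirected cycle $C_n$ with vertices $v_0,\dots,v_{n-1}$. The key reformulation is that a geodesic between two vertices at distance $d$ has exactly $d-1$ internal vertices, lying consecutively on the shorter arc. Thus a colouring is \emph{strongly} rainbow vertex-connected precisely when every geodesic has rainbow interior: for pairs with a unique geodesic this is exactly the demand that every block of the appropriate number of consecutive vertices be rainbow, which is a proper colouring of a suitable cycle power $C_n^{\,p}$, while for even $n$ the antipodal pairs (having two geodesics) impose only the weaker condition that \emph{one} of their two arcs be rainbow. Consequently $srvc(C_n)$ is governed by $\chi(C_n^{\,p})$, and since a largest independent set in $C_n^{\,p}$ has size $2$ once $n$ is large, one gets $\chi=\lceil\frac n2\rceil$, which matches the stated value for $n=14$ and $n\ge16$. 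I expect the main obstacle to be the small values of $n$: when $n$ is small the independent sets in $C_n^{\,p}$ can have size $3$ or more (so $\chi$ drops), and for even $n$ the antipodal condition can then \emph{raise} the value again, and it is exactly this interaction that produces the irregular exceptional lists $n=3,5,9$ and $n=4,6,7,8,10,12$. I would settle these finitely many cases by direct analysis, giving an explicit economical colouring for each upper bound and a short ad hoc argument for each lower bound.

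The most delicate point is the strict gap $\overset{\rightarrow}{rvc}(\lrC_n)=\lceil\frac n2\rceil-1<\lceil\frac n2\rceil=\overset{\rightarrow}{srvc}(\lrC_n)$ for $n=11,13,15$. For the non-strict parameter one is no longer confined to geodesics: for each pair it suffices that \emph{one} of the two connecting arcs be rainbow, even the longer one. I would show that for $n\in\{11,13,15\}$ this extra freedom permits an explicit colouring with $\lceil\frac n2\rceil-1$ colours, routing the few ``bad'' (near-antipodal) pairs the long way around, together with a lower bound ruling out $\lceil\frac n2\rceil-2$ colours; whereas for $n=14$ and $n\ge16$ the detour no longer saves a colour, so $rvc(C_n)=srvc(C_n)=\lceil\frac n2\rceil$. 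Pinning down precisely the cutoff at which the detour argument ceases to help, and proving the matching lower bound there, is the part I expect to be hardest and to require the most careful bookkeeping.
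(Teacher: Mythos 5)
Your treatment of (a), (c) and (d) is complete and matches the paper: reduce via Proposition~\ref{pro3}, handle $P_n$ directly, and observe that $\lrWn$ and $\lrK_{n_1,\dots,n_t}$ have diameter $2$ so that Theorem~\ref{pro2}(b)(i) applies. The genuine gap is in (b), where everything that is actually hard is announced rather than carried out, and it comes in two separate pieces. First, the values of $rvc(C_n)$ themselves --- the exceptional lists $n=3,5,9$ and $n=4,6,7,8,10,12$, the value $\lceil n/2\rceil-1$ at $n=11,13,15$, and the lower bound $rvc(C_n)\ge\lceil n/2\rceil$ for $n=14$ and $n\ge 16$ --- are nowhere established: you assert that ``the detour no longer saves a colour'' past the cutoff, but you give no argument, and you yourself flag this as the hardest part. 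The paper does not prove these values either; it imports them wholesale from \cite{LL2011}, Theorem 2.1, and only then determines $srvc(C_n)$. Without either that citation or a proof of your own, the first half of (b) is simply unproved. Second, even granting $rvc(C_n)$, your plan for $srvc(C_n)$ defers all of the exceptional cases to ``direct analysis'': no explicit colourings are exhibited for the upper bounds, and no lower-bound argument is given for $n=11,13,15$, which is precisely where $srvc(C_n)$ exceeds $rvc(C_n)$ and where the statement is most delicate. As it stands, part (b) is a correct-looking research plan, not a proof.

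The framework you set up is nevertheless sound, and it is genuinely different from the paper's. Your reformulation --- for pairs with a unique geodesic, strong rainbow vertex-connectedness of $C_n$ is exactly a proper colouring of a cycle power $C_n^{\,p}$, with antipodal pairs imposing a separate, weaker condition when $n$ is even --- is correct and does reproduce the stated values (for odd $n\ge 11$ the independence number of the relevant power drops to $2$, forcing $\lceil n/2\rceil$ colours; for $n=12$ the power has chromatic number $4$ but the antipodal condition forces $5$). The paper avoids this machinery entirely: with $rvc(C_n)$ known from \cite{LL2011}, it observes that for $n=3,4,5,6,8,9,10,12$ any rainbow path of length at most $rvc(C_n)+1\le\lfloor n/2\rfloor$ is automatically a geodesic (so $srvc=rvc$), gives one explicit colouring for $n=7$ and the colouring $1,\dots,\lceil n/2\rceil,1,\dots,\lfloor n/2\rfloor$ for $n=11$ and $n\ge 13$, and for the lower bound at $n=11,13,15$ uses a short pigeonhole argument: with $\lceil n/2\rceil-1$ colours some colour appears three times, two of its vertices lie within distance $\lfloor n/3\rfloor$, and extending that segment by one vertex at each end produces a unique geodesic of length $\lfloor n/3\rfloor+2=\lfloor n/2\rfloor$ that cannot be rainbow. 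Your route could certainly be completed, but the completion --- especially an independent determination of $rvc(C_n)$ --- would be substantially longer than the paper's argument.
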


\begin{proof}
For each part, by Proposition \ref{pro3}, it suffices to show that the corresponding graph $G$ has either $rvc(G)$ or $srvc(G)$ equal to the required value. Then, since it is easy to see that $rvc(P_n)=srvc(P_n)=n-2$ for $n\ge 2$, part (a) follows. Parts (c) and (d) follow similarly, and they can also be deduced from Theorem \ref{pro2}(b), since the diameter of each digraph in consideration is $2$. We may also refer to \cite{LMS2012}, Corollary 1.2.

By \cite{LL2011}, Theorem 2.1, we also have the result for $\overset{\rightarrow}{rvc}(\lrC_n)$ in (b). It remains and suffices to determine $srvc(C_n)$. For $n=3,4,5,6,8,9,10,12$, consider a rainbow vertex-connected vertex-colouring of $C_n$, using at most $rvc(C_n)$ colours. For any two vertices $x,y$ of $C_n$, there is a rainbow $x-y$ path with length at most $rvc(C_n)+1\le\lfloor\frac{n}{2}\rfloor$, which therefore is also a rainbow $x-y$ geodesic. This implies that $srvc(C_n)=rvc(C_n)$. For $n=7$, note that the vertex-colouring of $C_7$ where the vertices have colours $1,2,1,2,1,2,3$, in that order around the cycle, is strongly rainbow vertex-connected. It follows that $srvc(C_7)=rvc(C_7)=3$. Finally, let $n=11$ or $n\ge 13$. It is easy to see that the vertex-colouring of $C_n$ where the colours are $1,2,\dots,\lceil\frac{n}{2}\rceil,1,2,\dots,\lfloor\frac{n}{2}\rfloor$, in that order around the cycle, is strongly rainbow vertex-connected. Hence, $srvc(C_n)\le\lceil\frac{n}{2}\rceil$, and for $n\neq 11,13,15$, we have $\lceil\frac{n}{2}\rceil=rvc(C_n)\le srvc(C_n)\le\lceil\frac{n}{2}\rceil$, so that $srvc(C_n)=\lceil\frac{n}{2}\rceil$. For $n=11,13,15$, suppose that we have a vertex-colouring of $C_n$, using at most $\lceil\frac{n}{2}\rceil-1$ colours. Then, there are three vertices with the same colour, and two of them, say $u$ and $v$, are connected by a path $P$ with length at most $\lfloor\frac{n}{3}\rfloor$. Let $x$ and $y$ be the neighbours of $u$ and $v$ not in $P$. Then, the $x-y$ path containing $P$ has length at most $\lfloor\frac{n}{3}\rfloor+2=\lfloor\frac{n}{2}\rfloor$, and is therefore the unique $x-y$ geodesic. It follows that there is no rainbow $x-y$ geodesic, so that $srvc(C_n)\ge\lceil\frac{n}{2}\rceil$. Hence, $srvc(C_n)=\lceil\frac{n}{2}\rceil$.
\end{proof}

We remark that we have the somewhat surprising fact that $\overset{\rightarrow}{srvc}(\lrC_{11})=srvc(C_{11})=6$, which is greater than $\overset{\rightarrow}{srvc}(\lrC_{12})=srvc(C_{12})=5$. Next, we have the following result for directed cycles.

\begin{prop}\label{cyclepro}
Let $n\ge 3$. Then,
\[
\overset{\rightarrow}{rvc}(\rC_n)=\overset{\rightarrow}{srvc}(\rC_n)=
\left\{
\begin{array}{ll}
n-2 & \textup{\emph{if} }n=3,4\textup{\emph{,}}\\
n & \textup{\emph{if} }n\ge 5.
\end{array}
\right.
\]
\end{prop}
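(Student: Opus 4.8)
The plan begins with a structural observation that trivializes one of the two equalities. In $\rC_n$, for any ordered pair $(u,v)$ there is a \emph{unique} $u-v$ path, namely $u\rC v$. Consequently a vertex-colouring is rainbow vertex-connected if and only if it is strongly rainbow vertex-connected, so $\overset{\rightarrow}{rvc}(\rC_n)=\overset{\rightarrow}{srvc}(\rC_n)$ holds automatically for every $n\ge 3$, and it suffices to compute $\overset{\rightarrow}{rvc}(\rC_n)$.

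For the small cases $n=3,4$ I would appeal to the basic bounds. Since $\diam(\rC_3)=2$, Theorem \ref{pro2}(b)(i) gives $\overset{\rightarrow}{rvc}(\rC_3)=1=n-2$. Since $\diam(\rC_4)=3$, Proposition \ref{pro1} yields $\overset{\rightarrow}{rvc}(\rC_4)\ge 2$; for the matching upper bound I would exhibit the $2$-colouring $c(v_0)=c(v_2)=1$ and $c(v_1)=c(v_3)=2$, and check that each of the four length-$3$ paths $v_{i+1}\rC v_i$ has its two internal vertices coloured differently, while shorter paths have at most one internal vertex and are automatically rainbow. This gives $\overset{\rightarrow}{rvc}(\rC_4)=2=n-2$.

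The substance of the proposition is the case $n\ge 5$, where the claim is $\overset{\rightarrow}{rvc}(\rC_n)=n$. The upper bound $\overset{\rightarrow}{rvc}(\rC_n)\le n$ is immediate from Proposition \ref{pro1} (equivalently, colouring all $n$ vertices distinctly makes every path rainbow). For the lower bound I would show that a rainbow vertex-connected colouring must use $n$ distinct colours. The key is that for each $i$ the path $v_{i+1}\rC v_i$ has length $n-1$, and its set of internal vertices is exactly $V(\rC_n)\setminus\{v_i,v_{i+1}\}$. Hence, if two distinct vertices $v_a,v_b$ received the same colour, and if there were an index $i$ with $\{v_i,v_{i+1}\}\cap\{v_a,v_b\}=\emptyset$, then both $v_a$ and $v_b$ would be internal vertices of the unique path $v_{i+1}\rC v_i$, so that path would not be rainbow, a contradiction.

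The main point, and the source of the threshold $n\ge 5$, is the combinatorial step of locating such an index $i$. The pairs $\{v_i,v_{i+1}\}$ for $0\le i\le n-1$ are precisely the $n$ edges of the underlying cycle, and the two vertices $v_a,v_b$ are together incident to at most four of these edges; when $n\ge 5$ some edge $\{v_i,v_{i+1}\}$ avoids both, as required. Thus no colour can be repeated, forcing $n$ colours and completing the lower bound. I expect this counting step to be the only delicate part: it is exactly here that $n=3,4$ must be excluded, since in those cycles two suitably chosen vertices can meet every edge, which is why those cases take the smaller value $n-2$.
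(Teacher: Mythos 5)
Your proposal is correct and follows essentially the same route as the paper: both establish the lower bound for $n\ge 5$ by taking two vertices of the same colour and exhibiting an ordered pair of vertices whose unique directed path contains both of them as internal vertices, hence is not rainbow. The only cosmetic difference is how that pair is located — you count edges of the underlying cycle avoiding the two repeated-colour vertices (at most four incident edges versus $n\ge 5$ edges), while the paper takes the neighbours of $u$ and $v$ along a $u$--$v$ path of length at least $3$ — and your explicit treatment of $n=3,4$ and of the identity $\overset{\rightarrow}{rvc}(\rC_n)=\overset{\rightarrow}{srvc}(\rC_n)$ via uniqueness of paths just fills in details the paper dismisses as easy.
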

\begin{proof}
%Since any two vertices of $\rC_n$ have a unique path connecting them, we have $\overset{\rightarrow}{srvc}(\rC_n)=\overset{\rightarrow}{rvc}(\rC_n)$. 
It is easy to see that $\overset{\rightarrow}{rvc}(\rC_n)=\overset{\rightarrow}{srvc}(\rC_n)=n-2$ for $n=3,4$. For $n\ge 5$,  if we have a vertex-colouring of $\rC_n$ with fewer than $n$ colours, then there exist vertices $u$ and $v$ in $\rC_n$ with the same colour, and with a $u-v$ path $P$ in $\rC_n$ of length at least $3$. If $x$ and $y$ are the neighbours of $v$ and $u$ in $P$, then the unique $x-y$ path in $\rC_n$ contains $u$ and $v$, and thus is not rainbow. It follows that $\overset{\rightarrow}{rvc}(\rC_n)\ge n$, and the result follows from (\ref{pro1eq}).
\end{proof}

Now, we extend the results of Theorem \ref{biorthm}(b) and Proposition \ref{cyclepro}. We shall determine $\overset{\rightarrow}{rvc}(D)$ and $\overset{\rightarrow}{srvc}(D)$, where $D$ is any spanning strongly connected subdigraph of $\lrC_n$ with at least one asymmetric arc. We recall that the analogous problem for $\overset{\rightarrow}{rc}(D)$ and $\overset{\rightarrow}{src}(D)$ was considered by Alva-Samos and  Montellano-Ballesteros \cite{JJ2015}, and their result is the following.

\begin{thm}\textup{\cite{JJ2015}}
Let $D$ be a spanning strongly connected subdigraph of $\lrC_n$ with $k \ge 1$ asymmetric arcs. Then,
\[
\overset{\rightarrow}{rc}(D)=
\left\{
\begin{array}{ll}
n-1 & \textup{\emph{if} }k\le 2\textup{\emph{;}}\\
n & \textup{\emph{if} }k\ge 3.
\end{array}
\right.
\]
Moreover, if $k \ge 3$, then $\overset{\rightarrow}{src}(D)=n$.
\end{thm}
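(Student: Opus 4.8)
The plan is to first pin down the structure of all admissible $D$, then match a counting lower bound against explicit arc-colourings, treating the regimes $k\le 2$ and $k\ge 3$ separately.

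First I would prove a structural lemma. Writing the cycle as $v_0v_1\cdots v_{n-1}$ with forward arcs $f_i=v_iv_{i+1}$ and backward arcs $b_i=v_{i+1}v_i$ (indices mod $n$), each edge of $C_n$ contributes to $D$ either a symmetric pair, a single asymmetric arc, or nothing. I claim that strong connectivity together with $k\ge 1$ forces that no edge is omitted entirely and that all asymmetric arcs point the same way, say all forward (otherwise reverse all arcs). Both facts follow from one observation: if an edge $e_s$ carries only its forward arc, then by monotonicity the only simple $v_{s+1}\to v_s$ path is the long clockwise one $v_{s+1}v_{s+2}\cdots v_s$, and this exists only if \emph{every} other edge carries its forward arc; an omitted edge or an oppositely oriented asymmetric arc would destroy it and hence strong connectivity. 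Thus $D$ consists of all $n$ forward arcs together with the $n-k$ backward arcs on the symmetric edges, and conversely every such digraph is strongly connected. The second ingredient, used throughout, is that every simple directed path in $\lrC_n$ is monotone (a contiguous clockwise or counterclockwise arc); consequently the clockwise path between any ordered pair always exists, whereas a counterclockwise path exists exactly when its stretch meets no asymmetric edge.

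These give the lower bounds immediately. For each asymmetric edge $e_s$, the pair $(v_{s+1},v_s)$ admits only the clockwise path using the $n-1$ arcs $\{f_t:t\ne s\}$, which must therefore receive distinct colours, so $\overset{\rightarrow}{rc}(D)\ge n-1$. When $k\ge 3$, any two forward arcs $f_a,f_b$ lie together in the forced rainbow path for some asymmetric $e_s$ with $s\notin\{a,b\}$ (possible since three asymmetric edges exclude at most two indices), so all $n$ forward arcs must be distinctly coloured and $\overset{\rightarrow}{rc}(D)\ge n$. For the matching upper bound I would colour both arcs of edge $e_i$ by the single colour $i$: then every monotone path is rainbow, since its arcs lie on distinct edges, so every geodesic is rainbow. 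This shows $\overset{\rightarrow}{src}(D)\le n$, which with the lower bound settles $\overset{\rightarrow}{rc}(D)=\overset{\rightarrow}{src}(D)=n$ for $k\ge 3$.

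For $k\le 2$ I would save one colour by exploiting the asymmetric forward arcs. Colour each backward arc by its edge index, colour each forward arc by its edge index, but then make the asymmetric forward arcs repeat colours already present (for $k=2$, give $f_s$ and $f_{s'}$ a common colour; for $k=1$, give $f_s$ any existing colour), so that exactly $n-1$ colours appear. The verification is the crux and the step I expect to be the main obstacle. A clockwise path fails to be rainbow only if it uses both $f_s$ and $f_{s'}$, the unique repeated colour; but this happens only when both asymmetric edges lie on the clockwise stretch, in which case the complementary counterclockwise stretch contains no asymmetric edge, so its backward arcs are all present and the counterclockwise path is available and rainbow (distinct edge indices). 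In every remaining case the clockwise path itself carries at most one of $f_s,f_{s'}$ and so is rainbow; for $k=1$ the colour of the single asymmetric arc is in fact irrelevant, since whenever the clockwise path would use it one reroutes along the fully symmetric counterclockwise stretch. Hence $\overset{\rightarrow}{rc}(D)=n-1$ for $k\le 2$, completing the proof. By contrast, the structural lemma is short once the ``long way round'' observation is made, and the $k\ge 3$ lower bound is a clean counting argument, so the delicate case analysis for the $k\le 2$ construction is where the real work lies.
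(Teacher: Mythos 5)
Your proposal is correct, but there is nothing in the paper to compare it against: the paper does not prove this statement, it quotes it as a known theorem of Alva-Samos and Montellano-Ballesteros \cite{JJ2015}. So what you have written is a self-contained independent proof, and it is sound. Your structural lemma (a single asymmetric arc forces all $n$ forward arcs to be present, hence every asymmetric arc points the same way and the forward directed cycle spans $D$), together with the observation that every simple directed path in $\lrC_n$ is monotone, is exactly the foundation that the paper itself lays down at the start of its proof of the vertex-colouring analogue, Theorem \ref{cyclethm}; you then add the colour-counting and colouring constructions that the paper never carries out for arcs. All three main steps check out: (i) the unique $v_{s+1}$--$v_s$ path across an asymmetric edge $e_s$ forces the $n-1$ arcs $\{f_t: t\neq s\}$ to receive distinct colours, giving the lower bound $n-1$ for every $k\ge 1$; (ii) for $k\ge 3$, any two forward arcs $f_a,f_b$ lie on such a forced path for some asymmetric index $s\notin\{a,b\}$, giving the lower bound $n$, which is matched by the edge-index colouring under which every monotone path, hence every geodesic, is rainbow, so that $\overset{\rightarrow}{rc}(D)=\overset{\rightarrow}{src}(D)=n$; (iii) for $k\le 2$, your $(n-1)$-colouring works because a counterclockwise path uses only backward arcs with pairwise distinct edge-index colours, while a clockwise path can fail to be rainbow only by containing both arcs of the one repeated colour, in which case both asymmetric edges lie on the clockwise stretch, so the complementary stretch is fully symmetric and the counterclockwise path exists and is rainbow. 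The only step I would urge you to state as an explicit one-line lemma rather than an aside is monotonicity itself (a directed path in $\lrC_n$ that reverses direction immediately repeats a vertex), since the uniqueness claims in (i)--(ii) and the availability of the detour in (iii) all rest on it.
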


Before we state our results for $\overset{\rightarrow}{rvc}(D)$ and $\overset{\rightarrow}{srvc}(D)$, we make some definitions. For a simple path $P$ and its biorientation $\lrP$, we write $\ell(\lrP)$ for the length of $P$, and say that $\ell(\lrP)$ is the \emph{length} of $\lrP$. Note that $\ell(\lrP)=\textup{diam}(\lrP)=|V(P)|-1$. A vertex-colouring of $\lrP$ is \emph{rainbow} if it is rainbow for $P$. We define the four digraphs $D_1,D_2,D_3,D_4$ as shown in Figure 2.\\[1ex]
\[ \unit = 0.6cm
\varline{850}{4.1}
\ellipse{-9}{2}{2}{2}
\ellipse{-3}{2}{2}{2}
\ellipse{3}{2}{2}{2}
\ellipse{9}{2}{2}{2}
\varline{800}{0.6}
%D1arcs
\bez{-10}{3.7}{-9.51}{3.98}{-9}{4}\bez{-8}{0.3}{-7.54}{0.61}{-7.32}{0.98}
%D2arcs
\bez{-4}{3.7}{-3.51}{3.98}{-3}{4}\bez{-2}{3.7}{-2.49}{3.98}{-3}{4}\bez{-2}{3.7}{-1.54}{3.39}{-1.32}{3.02}
%D3arcs
\bez{2}{3.7}{2.49}{3.98}{3}{4}\bez{4}{3.7}{3.51}{3.98}{3}{4}\bez{4}{0.3}{4.46}{0.61}{4.68}{0.98}
%D4arcs
\bez{8}{3.7}{8.49}{3.98}{9}{4}\bez{10}{3.7}{9.51}{3.98}{9}{4}\bez{10}{0.3}{10.46}{0.61}{10.68}{0.98}\bez{10}{0.3}{9.51}{0.02}{9}{0}
\varline{750}{0.6}
%D1vertices
\pt{-9}{4}\pt{-10}{3.7}\pt{-7.31}{1}\pt{-8}{0.31}
%D2vertices
\pt{-3}{4}\pt{-4}{3.7}\pt{-1.31}{3}\pt{-2}{3.7}
%D3vertices
\pt{3}{4}\pt{2}{3.7}\pt{4.69}{1}\pt{4}{3.7}\pt{4}{0.31}
%D4vertices
\pt{9}{4}\pt{8}{3.7}\pt{10.69}{1}\pt{10}{3.7}\pt{10}{0.31}\pt{9}{0}
\thnline
%D1arrows
%TL
\dl{-9.4}{3.94}{-9.68}{3.96}\dl{-9.46}{3.92}{-9.68}{3.96}\dl{-9.5}{3.91}{-9.68}{3.96}\dl{-9.53}{3.9}{-9.68}{3.96}\dl{-9.56}{3.89}{-9.68}{3.96}\dl{-9.59}{3.88}{-9.68}{3.96}
\dl{-9.4}{3.94}{-9.62}{3.77}\dl{-9.46}{3.92}{-9.62}{3.77}\dl{-9.5}{3.91}{-9.62}{3.77}\dl{-9.53}{3.9}{-9.62}{3.77}\dl{-9.56}{3.89}{-9.62}{3.77}\dl{-9.59}{3.88}{-9.62}{3.77}
%BRM
\dl{-7.7}{0.53}{-7.59}{0.78}\dl{-7.65}{0.58}{-7.59}{0.78}\dl{-7.62}{0.61}{-7.59}{0.78}\dl{-7.6}{0.63}{-7.59}{0.78}\dl{-7.58}{0.65}{-7.59}{0.78}\dl{-7.56}{0.67}{-7.59}{0.78}
\dl{-7.7}{0.53}{-7.45}{0.64}\dl{-7.65}{0.58}{-7.45}{0.64}\dl{-7.62}{0.61}{-7.45}{0.64}\dl{-7.6}{0.63}{-7.45}{0.64}\dl{-7.58}{0.65}{-7.45}{0.64}\dl{-7.56}{0.67}{-7.45}{0.64}
%D2arrows
%TL
\dl{-3.4}{3.94}{-3.68}{3.96}\dl{-3.46}{3.92}{-3.68}{3.96}\dl{-3.5}{3.91}{-3.68}{3.96}\dl{-3.53}{3.9}{-3.68}{3.96}\dl{-3.56}{3.89}{-3.68}{3.96}\dl{-3.59}{3.88}{-3.68}{3.96}
\dl{-3.4}{3.94}{-3.62}{3.77}\dl{-3.46}{3.92}{-3.62}{3.77}\dl{-3.5}{3.91}{-3.62}{3.77}\dl{-3.53}{3.9}{-3.62}{3.77}\dl{-3.56}{3.89}{-3.62}{3.77}\dl{-3.59}{3.88}{-3.62}{3.77}
%TR
\dl{-2.35}{3.87}{-2.63}{3.85}\dl{-2.41}{3.89}{-2.63}{3.85}\dl{-2.45}{3.9}{-2.63}{3.85}\dl{-2.48}{3.91}{-2.63}{3.85}\dl{-2.51}{3.92}{-2.63}{3.85}\dl{-2.54}{3.93}{-2.63}{3.85}
\dl{-2.35}{3.87}{-2.57}{4.04}\dl{-2.41}{3.89}{-2.57}{4.04}\dl{-2.45}{3.9}{-2.57}{4.04}\dl{-2.48}{3.91}{-2.57}{4.04}\dl{-2.51}{3.92}{-2.57}{4.04}\dl{-2.54}{3.93}{-2.57}{4.04}
%TRM
\dl{-1.53}{3.3}{-1.64}{3.55}\dl{-1.58}{3.35}{-1.64}{3.55}\dl{-1.61}{3.38}{-1.64}{3.55}\dl{-1.63}{3.4}{-1.64}{3.55}\dl{-1.65}{3.42}{-1.64}{3.55}\dl{-1.67}{3.44}{-1.64}{3.55}
\dl{-1.53}{3.3}{-1.78}{3.41}\dl{-1.58}{3.35}{-1.78}{3.41}\dl{-1.61}{3.38}{-1.78}{3.41}\dl{-1.63}{3.4}{-1.78}{3.41}\dl{-1.65}{3.42}{-1.78}{3.41}\dl{-1.67}{3.44}{-1.78}{3.41}
%D3arrows
%TL
\dl{2.6}{3.94}{2.32}{3.96}\dl{2.54}{3.92}{2.32}{3.96}\dl{2.5}{3.91}{2.32}{3.96}\dl{2.47}{3.9}{2.32}{3.96}\dl{2.44}{3.89}{2.32}{3.96}\dl{2.41}{3.88}{2.32}{3.96}
\dl{2.6}{3.94}{2.38}{3.77}\dl{2.54}{3.92}{2.38}{3.77}\dl{2.5}{3.91}{2.38}{3.77}\dl{2.47}{3.9}{2.38}{3.77}\dl{2.44}{3.89}{2.38}{3.77}\dl{2.41}{3.88}{2.38}{3.77}
%TR
\dl{3.65}{3.87}{3.37}{3.85}\dl{3.59}{3.89}{3.37}{3.85}\dl{3.55}{3.9}{3.37}{3.85}\dl{3.52}{3.91}{3.37}{3.85}\dl{3.49}{3.92}{3.37}{3.85}\dl{3.46}{3.93}{3.37}{3.85}
\dl{3.65}{3.87}{3.43}{4.04}\dl{3.59}{3.89}{3.43}{4.04}\dl{3.55}{3.9}{3.43}{4.04}\dl{3.52}{3.91}{3.43}{4.04}\dl{3.49}{3.92}{3.43}{4.04}\dl{3.46}{3.93}{3.43}{4.04}
%BRM
\dl{4.3}{0.53}{4.41}{0.78}\dl{4.35}{0.58}{4.41}{0.78}\dl{4.38}{0.61}{4.41}{0.78}\dl{4.4}{0.63}{4.41}{0.78}\dl{4.42}{0.65}{4.41}{0.78}\dl{4.44}{0.67}{4.41}{0.78}
\dl{4.3}{0.53}{4.55}{0.64}\dl{4.35}{0.58}{4.55}{0.64}\dl{4.38}{0.61}{4.55}{0.64}\dl{4.4}{0.63}{4.55}{0.64}\dl{4.42}{0.65}{4.55}{0.64}\dl{4.44}{0.67}{4.55}{0.64}
%D4arrows
%TL
\dl{8.6}{3.94}{8.32}{3.96}\dl{8.54}{3.92}{8.32}{3.96}\dl{8.5}{3.91}{8.32}{3.96}\dl{8.47}{3.9}{8.32}{3.96}\dl{8.44}{3.89}{8.32}{3.96}\dl{8.41}{3.88}{8.32}{3.96}
\dl{8.6}{3.94}{8.38}{3.77}\dl{8.54}{3.92}{8.38}{3.77}\dl{8.5}{3.91}{8.38}{3.77}\dl{8.47}{3.9}{8.38}{3.77}\dl{8.44}{3.89}{8.38}{3.77}\dl{8.41}{3.88}{8.38}{3.77}
%TR
\dl{9.65}{3.87}{9.37}{3.85}\dl{9.59}{3.89}{9.37}{3.85}\dl{9.55}{3.9}{9.37}{3.85}\dl{9.52}{3.91}{9.37}{3.85}\dl{9.49}{3.92}{9.37}{3.85}\dl{9.46}{3.93}{9.37}{3.85}
\dl{9.65}{3.87}{9.43}{4.04}\dl{9.59}{3.89}{9.43}{4.04}\dl{9.55}{3.9}{9.43}{4.04}\dl{9.52}{3.91}{9.43}{4.04}\dl{9.49}{3.92}{9.43}{4.04}\dl{9.46}{3.93}{9.43}{4.04}
%BR
\dl{9.4}{0.06}{9.68}{0.04}\dl{9.46}{0.08}{9.68}{0.04}\dl{9.5}{0.09}{9.68}{0.04}\dl{9.53}{0.1}{9.68}{0.04}\dl{9.56}{0.11}{9.68}{0.04}\dl{9.59}{0.12}{9.68}{0.04}
\dl{9.6}{0.06}{9.62}{0.23}\dl{9.46}{0.08}{9.62}{0.23}\dl{9.5}{0.09}{9.62}{0.23}\dl{9.53}{0.1}{9.62}{0.23}\dl{9.56}{0.11}{9.62}{0.23}\dl{9.59}{0.12}{9.62}{0.23}
%BRM
\dl{10.3}{0.53}{10.41}{0.78}\dl{10.35}{0.58}{10.41}{0.78}\dl{10.38}{0.61}{10.41}{0.78}\dl{10.4}{0.63}{10.41}{0.78}\dl{10.42}{0.65}{10.41}{0.78}\dl{10.44}{0.67}{10.41}{0.78}
\dl{10.3}{0.53}{10.55}{0.64}\dl{10.35}{0.58}{10.55}{0.64}\dl{10.38}{0.61}{10.55}{0.64}\dl{10.4}{0.63}{10.55}{0.64}\dl{10.42}{0.65}{10.55}{0.64}\dl{10.44}{0.67}{10.55}{0.64}
\point{-9.35}{-1}{\small$D_1$}\point{-3.35}{-1}{\small$D_2$}\point{2.65}{-1}{\small$D_3$}\point{8.65}{-1}{\small$D_4$}
\point{-11.4}{0.5}{\small$\lrP'$}\point{-7.05}{2.9}{\small$\lrP$}
\point{0.6}{0.5}{\small$\lrP'$}\point{5.1}{2.4}{\small$\lrP$}
\point{6.4}{0.9}{\small$\lrP'$}\point{11.1}{2.4}{\small$\lrP$}
\ptlu{0}{-2.6}{\textup{
\small
\begin{tabular}{c}
Figure 2. The digraphs $D_1$ to $D_4$.
\end{tabular}
}
}
\]\\[1ex]
In Figure 2, each dotted line represents a bioriented path. The bioriented paths in $D_3$ are non-trivial, but those of $D_1,D_2$ and $D_4$ may be trivial. Note that $D_1,D_3$ and $D_4$ may be one of many possible digraphs, for fixed $n$.

Now, we have the following result. It is worth noting that the existence of an asymmetric arc in $D$ creates a significant difference, with $\overset{\rightarrow}{rvc}(\lrC_n)$ and $\overset{\rightarrow}{srvc}(\lrC_n)$ approximately $\frac{n}{2}$, and $\overset{\rightarrow}{rvc}(D)$ and $\overset{\rightarrow}{srvc}(D)$ approximately $n$.

\begin{thm}\label{cyclethm}
Let $D$ be a spanning strongly connected subdigraph of $\lrC_n$, where $n\ge 3$, and with $k \ge 1$ asymmetric arcs. 
\begin{enumerate}
\item[(a)]
\[
\overset{\rightarrow}{rvc}(D)=
\left\{
\begin{array}{ll}
n-2 & \textup{\emph{if} }k\le 2\textup{\emph{, or} }D=D_2\textup{\emph{, or} }D=D_4\textup{\emph{ with }}n=4\textup{\emph{;}}\\
n-1 & \textup{\emph{if} }D=D_3\textup{\emph{, or} }D=D_4\textup{\emph{ with }}n\ge 5\textup{\emph{;}}\\
n & \textup{\emph{otherwise.}}
\end{array}
\right.
\]
\item[(b)] 
\begin{enumerate}
\item[(i)]
$\overset{\rightarrow}{srvc}(D)=n-2$ if one of the following holds.
\begin{itemize}
\item $k=1$.
\item $D=D_1$ with $n\le 8$, or with $n\ge 9$ and $\ell(\lrP),\ell(\lrP')\le\lfloor\frac{n}{2}\rfloor+1$.
\item $D=D_2$ with $n\le 8$.
\item $D=D_4$ with $n=4$.
\end{itemize}
\item[(ii)] $\overset{\rightarrow}{srvc}(D)=n-1$ if one of the following holds.
\begin{itemize}
\item $D=D_1$ with $n\ge 9$, and $\ell(\lrP)\in\{0,\lfloor\frac{n}{2}\rfloor+2\}$ or $\ell(\lrP')\in\{0,\lfloor\frac{n}{2}\rfloor+2\}$.
\item $D=D_3$ with $5\le n\le 10$, or with $n\ge 11$ and $\ell(\lrP),\ell(\lrP')\le\lfloor\frac{n}{2}\rfloor+1$.
\item $D=D_4$ with $5\le n\le 8$, or with $n\ge 9$ and $\ell(\lrP),\ell(\lrP')\le\lfloor\frac{n}{2}\rfloor$.
\end{itemize}
\item[(iii)] Otherwise, we have $\overset{\rightarrow}{srvc}(D)=n$.
\end{enumerate}
\end{enumerate}
\end{thm}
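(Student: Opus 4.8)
The plan is to begin by pinning down the structure of $D$. Since $D$ is a strongly connected spanning subdigraph of $\lrC_n$ with $k\ge 1$ asymmetric arcs, no edge-position of the underlying cycle can be missing both arcs (otherwise $D$ would not be strongly connected), and a short argument shows that all $k$ asymmetric arcs must be coherently oriented around the cycle: if two of them pointed in opposite rotational senses, one could exhibit an ordered pair of vertices with no connecting path. Thus $D$ is a cycle in which $k$ positions are one-way (say clockwise) and the remaining $n-k$ are bioriented, and $D$ is completely described by the cyclic pattern of its maximal blocks of consecutive asymmetric arcs, separated by the maximal bioriented runs. The four digraphs $D_1,D_2,D_3,D_4$ of Figure~2 are precisely the configurations with few such blocks: $D_1$ has two asymmetric arcs, $D_2$ a single block of three, $D_3$ a block of two together with a further single arc, and $D_4$ two blocks of two, with the remaining bioriented runs recorded as $\lrP,\lrP'$. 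I would record once and for all, for each ordered pair $(u,v)$, the possible $u$–$v$ paths: travelling clockwise is always possible, while travelling counter-clockwise is blocked exactly when the route crosses an asymmetric arc, so that "going backwards" across any asymmetric arc forces a path that wraps almost all the way around the cycle.

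For the lower bounds I would use the pigeonhole argument already seen in Proposition~\ref{cyclepro}: if a colouring uses too few colours, two vertices $u,v$ receive the same colour, and one then selects an ordered pair whose only (respectively only shortest) connecting path has both $u$ and $v$ as internal vertices, so that no rainbow (respectively rainbow geodesic) path exists. The key observation is that each asymmetric arc creates a forced wrap-around path whose internal vertices are all of $V(D)$ except its two endpoints; counting how many such forced paths occur and how their internal-vertex sets overlap is what produces the trichotomy between $n-2$, $n-1$ and $n$ in part~(a). For the matching upper bounds I would exhibit explicit colourings, colouring the vertices consecutively $1,2,\dots$ around the cycle and reusing a colour only on vertices that never need to be internal on a required path, so that the one or two vertices adjacent to the asymmetric block(s) can be made to repeat a colour. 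Checking that such a colouring is rainbow vertex-connected then reduces to the finitely many types of ordered pairs identified in the structural step, and the universal bound $\overset{\rightarrow}{rvc}(D)\le\overset{\rightarrow}{srvc}(D)\le n$ from Proposition~\ref{pro1} supplies the trivial endpoint.

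The genuinely delicate part, and the main obstacle, is the strong parameter $\overset{\rightarrow}{srvc}(D)$ in part~(b), where one must produce rainbow \emph{geodesics} rather than merely rainbow paths. Here the answer depends not only on the block pattern but also on the lengths $\ell(\lrP),\ell(\lrP')$ relative to $\lfloor n/2\rfloor$, because whether a wrap-around path of a given length is actually a geodesic — and hence forced — changes as the runs lengthen or shorten: when a bioriented run is long, an ordered pair may acquire a second, shorter geodesic that reroutes through the run and relaxes the colouring constraint, whereas when the runs are short the long wrap-around remains the unique geodesic and the constraint is tight. I would therefore carry out the analysis configuration by configuration ($D_1$, $D_2$, $D_3$, $D_4$), and within each split according to the stated thresholds on $\ell(\lrP),\ell(\lrP')$ and on small $n$, computing in each regime the length of the relevant geodesics and the size of the set of vertices forced to receive distinct colours, with the small exceptional values of $n$ checked by explicit colourings exactly as in the proof of Theorem~\ref{biorthm}. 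The bookkeeping of which geodesics are unique is where almost all of the work lies, and is the step most prone to error.
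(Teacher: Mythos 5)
Your proposal is correct and takes essentially the same approach as the paper: the same structural reduction (strong connectivity forces all asymmetric arcs to be coherently oriented, so the directed cycle spans $D$ and $D$ falls into the block classification $D_1,\dots,D_4$), the same pigeonhole lower bounds obtained from unique wrap-around paths and geodesics whose internal vertices must be rainbow, and the same near-consecutive colourings with repeated colours for the upper bounds, with part (b) resolved case by case according to when a path of length at most $\lceil\frac{n}{2}\rceil-1$ is the unique geodesic. The only difference is presentational: the paper packages the upper-bound verification into a reusable criterion (Claim \ref{cycleclm2}: every monochromatic pair must disconnect $D$ into two rainbow bioriented paths, of length at most $\lfloor\frac{n}{2}\rfloor$ for the strong version), whereas you propose to check the required pairs directly.
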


\begin{proof}
Let $V(D)=V(\lrC_n)=\{v_0,\dots,v_{n-1}\}$, where in $\lrC_n$, there are symmetric arcs between $v_i$ and $v_{i+1}$ for $0\le i\le n-1$ (with $v_n=v_0$). Since $k\ge 1$, we may assume that $v_{n-1}v_0$ is an asymmetric arc of $D$. This implies that $D$ contains all the arcs $v_iv_{i+1}$ for $0\le i\le n-1$, since otherwise there would not exist a $v_0-v_{n-1}$ path in $D$. Thus, the directed cycle $\rC=v_0v_1\cdots v_{n-1}v_0$ is a spanning subdigraph of $D$. Note that $v_0\rC v_{n-1}$ is the only $v_0-v_{n-1}$ path in $D$, so that diam$(D)=n-1$. By (\ref{pro1eq}), we have $n-2\le \overset{\rightarrow}{rvc}(D)\le \overset{\rightarrow}{srvc}(D)\le n$. We now prove two auxiliary claims.

\begin{claim}\label{cycleclm1}
If $D$ is such that $k\neq 1$ and $D\not\in\{D_1,D_2,D_3,D_4\}$, then $\overset{\rightarrow}{rvc}(D)=\overset{\rightarrow}{srvc}(D)=n$.
\end{claim}

\begin{proof}
Firstly, if $(n,k)=(5,5)$, then we have $D=\rC_5$, so that $\overset{\rightarrow}{rvc}(D)=\overset{\rightarrow}{srvc}(D)=5=n$. Now, let $(n,k)\neq (5,5)$. By (\ref{pro1eq}), it suffices to prove that $\overset{\rightarrow}{rvc}(D)\ge n$. Note that $D$ contains three pairwise non-incident asymmetric arcs, say $v_{n-1}v_0$, $v_iv_{i+1}$ and $v_jv_{j+1}$, where $1\le i<j\le n-3$ and $j-i\ge 2$. Suppose that we have a vertex-colouring of $D$ with at most $n-1$ colours. Then two vertices $u,v\in V(D)$ have the same colour. Now $\{u,v\}$ is disjoint from one of $\{v_{n-1},v_0\}$, $\{v_i,v_{i+1}\}$ and $\{v_j,v_{j+1}\}$. Assume without loss of generality that $\{u,v\}$ is disjoint from $\{v_{n-1}, v_0\}$. Then, the only $v_0-v_{n-1}$ path is $v_0\rC v_{n-1}$, which has $u$ and $v$ as internal vertices, and so is not rainbow. Therefore $\overset{\rightarrow}{rvc}(D)\ge n$.
\end{proof}

\begin{claim}\label{cycleclm2}
Let $c$ be a vertex-colouring of $D$ with the following property. For any two vertices $u,v\in V(D)$ such that $c(u)=c(v)$, the subdigraph $D-\{u,v\}$ consists of two components, each of which is a rainbow bioriented path. Then the following hold.
\begin{enumerate}
\item[(a)] $c$ is a rainbow vertex-connected colouring of $D$. 
\item[(b)] If in addition, both bioriented paths of $D-\{u,v\}$ have length at most $\lfloor\frac{n}{2}\rfloor$, then $c$ is a strongly rainbow vertex-connected colouring of $D$.
\end{enumerate}
\end{claim}

\begin{proof}
(a) Let $x,y\in V(D)$, and suppose that there exists an $x-y$ path $P$ in $D$ which is not rainbow. Then, $P$ has internal vertices $u$ and $v$ with $c(u)=c(v)$. The stated property implies that $x$ and $y$ belong to a rainbow bioriented path, and therefore there is a rainbow $x-y$ path. It follows that $c$ is a rainbow vertex-connected colouring of $D$.\\[1ex]
\indent (b) Suppose that $c$ has the additional property as stated. Let $x,y\in V(D)$. Then, either every $x-y$ path in $D$ is rainbow, so there certainly exists a rainbow $x-y$ geodesic. Or, there exists an $x-y$ path in $D$ which is not rainbow. By the previous argument and using the additional property, there is a rainbow $x-y$ path of length at most $\lfloor\frac{n}{2}\rfloor$, which is therefore a geodesic. Hence, $c$ is a strongly rainbow vertex-connected colouring of $D$.
\end{proof}

By Claim \ref{cycleclm1}, it remains to consider $D$ when $k=1$, and when $D\in\{D_1,D_2,D_3,D_4\}$ for the rest of the proof of Theorem \ref{cyclethm}.\\[1ex]
\indent (a) We first consider $D\in\{D_1,D_2\}$. Clearly, $\overset{\rightarrow}{rvc}(D)=1=n-2$ if $n=3$. Now, let $n\ge 4$. We may let the asymmetric arcs of $D_1$ be $v_{n-1}v_0$ and $v_\ell v_{\ell+1}$, for some $0\le\ell\le n-3$, and those of $D_2$ be $v_{n-1}v_0,v_0v_1$ and $v_1v_2$. Define the vertex-colouring $c$ of $D$ with $n-2$ colours as follows. 
\begin{equation}\label{cycleeq1}
\begin{array}{r@{\:\:}c@{\:\:}ll}
c(v_i) & = & i & 
\begin{array}{l}
\textup{for }1\le i\le n-2,
\end{array}
\\
(c(v_{n-1}),c(v_0)) & = &
\left\{
\begin{array}{l}
(\ell,\ell+1)\\
(1,2)
\end{array}
\right.
&
\begin{array}{l}
\textup{if }D=D_1\textup{ and }1\le\ell\le n-3,\\
\textup{otherwise.}
\end{array}
\end{array}
\end{equation}
If $D=D_1$ and $1\le\ell\le n-3$, then both $D_1-\{v_0,v_{\ell+1}\}$ and $D_1-\{v_\ell,v_{n-1}\}$ consists of two rainbow bioriented paths. Otherwise, both $D-\{v_0,v_2\}$ and $D-\{v_1,v_{n-1}\}$ consists of a single vertex and a rainbow bioriented path of length $n-4$. Hence by Claim \ref{cycleclm2}(a), $c$ is a rainbow vertex-connected colouring of $D$. 

Next, if $k=1$, then since $D_1$ is a spanning subdigraph of $D$, by Proposition \ref{pro4}, it follows that $\overset{\rightarrow}{rvc}(D)=n-2$. Also, we have $D_4=\rC_4$ when $n=4$, and $\overset{\rightarrow}{rvc}(D_4)=2=n-2$. This completes the proof of the first part of (a).

Now for the second part, we consider $D_3$ and $D_4$, both with $n\ge 5$. Note that $D_4$ is a spanning subdigraph of $D_3$, if the asymmetric arcs for both digraphs are suitably chosen. By Proposition \ref{pro4}, we have $\overset{\rightarrow}{rvc}(D_3)\le\overset{\rightarrow}{rvc}(D_4)$. For the second part of (a), it suffices to prove that $\overset{\rightarrow}{rvc}(D_3)\ge n-1$ and $\overset{\rightarrow}{rvc}(D_4)\le n-1$. 

For the former, let the three asymmetric arcs of $D_3$ be $v_{n-1}v_0$, $v_0v_1$ and $v_\ell v_{\ell+1}$, for some $2\le\ell\le n-3$. Suppose that we have a rainbow vertex-connected colouring $c$ of $D_3$, with $n-2$ colours. Since the unique $v_0-v_{n-1}$ path is $v_0\rC v_{n-1}$, we may assume that $c(v_i)=i$ for $1\le i\le n-2$. Since the unique $v_1-v_0$ path is $v_1\rC v_0$, this means that we must have $c(v_{n-1})=1$. But then, the unique $v_{\ell+1}-v_\ell$ path is $v_{\ell+1}\rC v_\ell$, which contains $v_1$ and $v_{n-1}$ as internal vertices, and hence is not rainbow, a contradiction. Therefore, $\overset{\rightarrow}{rvc}(D_3)\ge n-1$.

For the latter, let the four asymmetric arcs of $D_4$ be $v_{n-1}v_0$, $v_0v_1$, $v_\ell v_{\ell+1}$ and $v_{\ell+1}v_{\ell+2}$, for some $1\le \ell\le n-3$. Define the vertex-colouring $c$ of $D_4$ where $c(v_i)=i$ for $1\le i\le n-1$, and $c(v_0)=\ell+1$. We can easily apply Claim \ref{cycleclm2}(a), by considering $D_4-\{v_0,v_{\ell+1}\}$, to obtain $\overset{\rightarrow}{rvc}(D_4)\le n-1$. This completes the proof of the second part of (a).

For the third part of (a), if $D$ is exclusive from the first two parts, then $k\neq 1$ and $D\not\in\{D_1,D_2,D_3,D_4\}$. Hence, $\overset{\rightarrow}{rvc}(D)=n$ by Claim \ref{cycleclm1}.\\[1ex]
\indent (b) We observe throughout that for $x,y\in V(D)$, if an $x-y$ path has length at most $\lceil\frac{n}{2}\rceil-1$, then it is the unique $x-y$ geodesic.\\[1ex]
\indent (i) Firstly, for $D=D_4$ with $n=4$, we have $D_4=\rC_4$, and $\overset{\rightarrow}{srvc}(D_4)=2=n-2$. Next, let $k=1$. Clearly, $\overset{\rightarrow}{srvc}(D)=1=n-2$ if $n=3$. Let $n\ge 4$, and $c$ be the vertex-colouring of $D$ where $c(v_i)=i$ for $1\le i\le n-2$, $c(v_0)=\lceil\frac{n}{2}\rceil$, and $c(v_{n-1})=\lceil\frac{n}{2}\rceil-1$. Then, both $D-\{v_0,v_{\lceil n/2 \rceil}\}$ and $D-\{v_{\lceil n/2 \rceil-1},v_{n-1}\}$ consists of two rainbow bioriented paths with length at most $\lceil\frac{n}{2}\rceil-2\le\lfloor\frac{n}{2}\rfloor$.

Now, let $D\in\{D_1,D_2\}$ where in each case, $D$ has the respective condition as stated. Clearly, $\overset{\rightarrow}{srvc}(D)=1=n-2$ if $n=3$. For $n\ge 4$, let the asymmetric arcs of $D$ be as described in the first part of (a). That is, for $D_1$, they are $v_{n-1}v_0$ and $v_\ell v_{\ell+1}$, for some $0\le\ell\le n-3$. Moreover, for $n\ge 9$, the condition $\ell(\lrP),\ell(\lrP')\le\lfloor\frac{n}{2}\rfloor+1$ implies that $\lceil\frac{n}{2}\rceil-3\le\ell\le\lfloor\frac{n}{2}\rfloor+1$. For $D_2$, we have $4\le n\le 8$, and the asymmetric arcs are $v_{n-1}v_0,v_0v_1$ and $v_1v_2$. Consider the colouring $c$ of $D$ as described in (\ref{cycleeq1}). As before, if $D=D_1$ and $1\le\ell\le n-3$, then we consider $D_1-\{v_0,v_{\ell+1}\}$ and $D_1-\{v_\ell,v_{n-1}\}$. Otherwise, consider $D-\{v_0,v_2\}$ and $D-\{v_1,v_{n-1}\}$. If $4\le n\le 8$, then we have two rainbow bioriented paths, each with length at most $n-4\le\lfloor\frac{n}{2}\rfloor$. Otherwise, we have $D=D_1$ with $n\ge 9$ and $\ell(\lrP),\ell(\lrP')\le\lfloor\frac{n}{2}\rfloor+1$, so the two rainbow bioriented paths have length at most $\lfloor\frac{n}{2}\rfloor$. 

In every case, by Claim \ref{cycleclm2}(b), $c$ is a strongly rainbow vertex-connected colouring of $D$. It follows that $\overset{\rightarrow}{srvc}(D)=n-2$.\\[1ex]
\indent (ii) Firstly, let $D=D_1$ with $n\ge 9$ and the stated condition. We may assume that the asymmetric arcs of $D_1$ are $v_{n-1}v_0$ and $v_\ell v_{\ell+1}$, where $\ell\in\{0,\lfloor\frac{n}{2}\rfloor+2\}$. Suppose that we have a strongly rainbow vertex-connected colouring $c$ of $D$, with $n-2$ colours. If $\ell=0$, then as in the argument for $D_3$ in part (a), in view of the unique $v_0-v_{n-1}$ and $v_1-v_0$ paths in $D_1$, we may assume that $c(v_i)=i$ for $1\le i\le n-2$, and $c(v_{n-1})=1$. Now, the unique $v_{n-2}-v_2$ geodesic is $v_{n-2}\rC v_2$, since it has length $4\le\lceil\frac{n}{2}\rceil-1$. But this geodesic is not rainbow, a contradiction. Next, let $\ell=\lfloor\frac{n}{2}\rfloor+2$, and note that $v_{n-1}v_0$ and $v_\ell v_{\ell+1}$ are not incident. Again, the unique $v_0-v_{n-1}$ path in $D_1$ implies that, we may assume $c(v_i)=i$ for $1\le i\le n-2$. Since $v_{\ell+1}\rC v_\ell$ is the unique $v_{\ell+1}-v_\ell$ path, we have $c(v_{n-1})\in\{\ell,\ell+1\}$. Now, the unique $v_{\ell-1}-v_0$ geodesic is $v_{\ell-1}\rC v_0$, since it has length $n-\ell+1=\lceil\frac{n}{2}\rceil-1$. But this geodesic is not rainbow, another contradiction. Hence, $\overset{\rightarrow}{srvc}(D)\ge n-1$. 

Now, let $c$ be the vertex-colouring of $D_1$ where $c(v_i)=i$ for $1\le i\le n-1$, and $c(v_0)=\lfloor\frac{n}{2}\rfloor$ if $\ell=0$, and $c(v_0)=\ell=\lfloor\frac{n}{2}\rfloor+2$ if $\ell=\lfloor\frac{n}{2}\rfloor+2$. If $\ell=0$, then $D_1-\{v_0,v_{\lfloor n/2\rfloor}\}$ consists of two rainbow bioriented paths of length at most $\lceil\frac{n}{2}\rceil-2\le\lfloor\frac{n}{2}\rfloor$. If $\ell=\lfloor\frac{n}{2}\rfloor+2$, then $D_1-\{v_0,v_\ell\}$ consists of two rainbow bioriented paths, one with length $\lfloor\frac{n}{2}\rfloor$, and the other with length $\lceil\frac{n}{2}\rceil-4\le\lfloor\frac{n}{2}\rfloor$. Hence, $\overset{\rightarrow}{srvc}(D_1)\le n-1$ by Claim \ref{cycleclm2}(b).

Next, let $D\in\{D_3,D_4\}$ with $n\ge 5$. By (\ref{pro1eq}) and part (a), we have $\overset{\rightarrow}{srvc}(D)\ge\overset{\rightarrow}{rvc}(D)=n-1$. Now, let $D=D_3$ with the stated condition. Let the asymmetric arcs be $v_{n-1}v_0, v_0v_1$ and $v_\ell v_{\ell+1}$, for some $2\le\ell\le n-3$. Note that for $n\ge 11$, we have $\lceil\frac{n}{2}\rceil-3\le\ell\le\lfloor\frac{n}{2}\rfloor+2$. Let $c$ be the vertex-colouring of $D_3$ where $c(v_i)=i$ for $1\le i\le n-1$. We let $c(v_0)=\ell$, unless if $5\le n\le 10$ and $\ell=2$, or if $n\ge 11$ and $\ell=\lceil\frac{n}{2}\rceil-3$, in which case we let $c(v_0)=\ell+1$. If $c(v_0)=\ell$, then $D_3-\{v_0,v_\ell\}$ consists of two rainbow bioriented paths of length at most $n-5\le\lfloor\frac{n}{2}\rfloor$ if $5\le n\le 10$, and at most $\lfloor\frac{n}{2}\rfloor$ if $n\ge 11$. If $c(v_0)=\ell+1$, then $D_3-\{v_0,v_{\ell+1}\}$ consists of two rainbow bioriented paths, with lengths $1$ and $n-5\le\lfloor\frac{n}{2}\rfloor$ if $5\le n\le 10$, and lengths $\lceil\frac{n}{2}\rceil-4\le\lfloor\frac{n}{2}\rfloor$ and $\lfloor\frac{n}{2}\rfloor$ if $n\ge 11$. Hence, $\overset{\rightarrow}{srvc}(D_3)\le n-1$ by Claim \ref{cycleclm2}(b). Finally, let $D=D_4$ with the stated condition. Let the asymmetric arcs be $v_{n-1}v_0, v_0v_1,v_\ell v_{\ell+1}$ and $v_{\ell+1} v_{\ell+2}$, for some $1\le\ell\le n-3$. Note that for $n\ge 9$, we have $\lceil\frac{n}{2}\rceil-3\le\ell\le\lfloor\frac{n}{2}\rfloor+1$. Let $c$ be the vertex-colouring of $D_4$ where $c(v_i)=i$ for $1\le i\le n-1$, and $c(v_0)=\ell+1$. Then $D_4-\{v_0,v_{\ell+1}\}$ consists of two rainbow bioriented paths of length at most $n-4\le\lfloor\frac{n}{2}\rfloor$ if $5\le n\le 8$, and at most $\lfloor\frac{n}{2}\rfloor$ if $n\ge 9$. Again by Claim \ref{cycleclm2}(b), we have $\overset{\rightarrow}{srvc}(D_4)\le n-1$.\\[1ex]
\indent (iii) By Claim \ref{cycleclm1}, in the remaining cases for $D$, we have $D\in\{D_1,D_2,D_3,D_4\}$. In each case, there is the condition on $D$ which is exclusive from (i) and (ii). In particular, we have $n\ge 9$. Suppose that we have a strongly rainbow vertex-connected colouring $c$ of $D$, using at most $n-1$ colours. Since $v_0\rC v_{n-1}$ is the unique $v_0-v_{n-1}$ path, we may assume that $c(v_i)=i$ for $1\le i\le n-2$. Note that if $D=D_1$, then the two asymmetric arcs are not incident. If $D\in\{D_2,D_3,D_4\}$, then we may assume that $v_0v_1$ is an asymmetric arc. For these cases of $D$, we have $c(v_{n-1})\neq 1$. Otherwise, the unique $v_{n-2}-v_2$ geodesic is $v_{n-2}\rC v_2$, since it has length $4\le\lceil\frac{n}{2}\rceil-1$, but it is not rainbow. Since $v_1\rC v_0$ is the unique $v_1-v_0$ path, we may assume that $c(v_{n-1})=n-1$.

Firstly if $D=D_1$, then we have $n\ge 11$, with $\lrP$ and $\lrP'$ both non-trivial, and either $\ell(\lrP)\ge\lfloor\frac{n}{2}\rfloor+3$ or $\ell(\lrP')\ge\lfloor\frac{n}{2}\rfloor+3$. We may assume that the asymmetric arcs are $v_{n-1}v_0$ and $v_\ell v_{\ell+1}$, where $\lfloor\frac{n}{2}\rfloor+3\le\ell\le n-3$. Note that $c(v_0)\neq c(v_{n-1})$. Otherwise, the unique $v_{n-2}-v_1$ geodesic is $v_{n-2}\rC v_1$, since it has length $3<\lceil\frac{n}{2}\rceil-1$, but it is not rainbow. Since $v_{\ell+1}\rC v_\ell$ is the unique $v_{\ell+1}-v_\ell$ path, it follows that $c(v_0)\in\{\ell,\ell+1\}$ or $c(v_{n-1})\in\{\ell,\ell+1\}$. We have $v_{\ell-1}\rC v_1$ is the unique $v_{\ell-1}-v_1$ geodesic, since it has length $n-\ell+2\le \lceil\frac{n}{2}\rceil-1$. But this geodesic is not rainbow.

Next, if $D=D_2$, then we have $n\ge 9$. Since $v_2\rC v_1$ is the unique $v_2-v_1$ path, it follows that $c(v_0)\in\{1,2\}$. But then, the unique $v_{n-1}-v_3$ geodesic is $v_{n-1}\rC v_3$, since it has length $4\le \lceil\frac{n}{2}\rceil-1$, and it is not rainbow. 

Now, let $D=D_3$, so that $n\ge 11$, and either $\ell(\lrP)\ge\lfloor\frac{n}{2}\rfloor+2$ or $\ell(\lrP')\ge\lfloor\frac{n}{2}\rfloor+2$. We may assume that the asymmetric arcs are $v_{n-1}v_0, v_0v_1$ and $v_\ell v_{\ell+1}$, where either $\lfloor\frac{n}{2}\rfloor+3\le\ell\le n-3$ or $2\le\ell\le \lceil\frac{n}{2}\rceil-4$. Since $v_{\ell+1}\rC v_\ell$ is the unique $v_{\ell+1}-v_\ell$ path, it follows that $c(v_0)\in\{\ell,\ell+1\}$. If $\lfloor\frac{n}{2}\rfloor+3\le\ell\le n-3$, then $v_{\ell-1}\rC v_1$ is the unique $v_{\ell-1}-v_1$ geodesic, since it has length $n-\ell+2\le\lceil\frac{n}{2}\rceil-1$. If $2\le\ell\le \lceil\frac{n}{2}\rceil-4$, then $v_{n-1}\rC v_{\ell+2}$ is the unique $v_{n-1}-v_{\ell+2}$ geodesic, since it has length $\ell+3\le\lceil\frac{n}{2}\rceil-1$. In either case, the unique geodesic in consideration is not rainbow.

Finally, let $D=D_4$, so that $n\ge 9$, and either $\ell(\lrP)\ge\lfloor\frac{n}{2}\rfloor+1$ or $\ell(\lrP')\ge\lfloor\frac{n}{2}\rfloor+1$. We may assume that the asymmetric arcs are $v_{n-1}v_0, v_0v_1,v_\ell v_{\ell+1}$ and $v_{\ell+1}v_{\ell+2}$, where $\lfloor\frac{n}{2}\rfloor+2\le\ell\le n-3$. We may similarly consider the unique $v_{\ell+1}-v_\ell$ and $v_{\ell+2}-v_{\ell+1}$ paths, and deduce that $c(v_0)=\ell+1$. But then, the unique $v_\ell-v_1$ geodesic is $v_\ell \rC v_1$, since it has length $n-\ell+1\le \lceil\frac{n}{2}\rceil-1$, and it is not rainbow. 

In all cases, we have a contradiction, and hence $\overset{\rightarrow}{srvc}(D)=n$.\\[1ex]
\indent This completes the proof of Theorem \ref{cyclethm}.
\end{proof}

Our next aim is to consider the (strong) rainbow vertex-connection numbers of circulant digraphs. For an integer $n\geq 3$ and a set $S\subset \{1,2,\ldots,n-1\}$, the \emph{circulant digraph} $C_{n}(S)$ is defined to be the digraph with vertex set $V(C_{n}(S))=\{v_{0},v_{1},\ldots,v_{n-1}\}$ and arc set $A(C_{n}(S))=\{v_{i}v_{j}: j-i \equiv s \textup{ (mod $n$), }s\in S\}$. Throughout, the subscripts of the vertices are taken cyclically modulo $n$. An arc $v_{i}v_{j}$, where $j-i \equiv s$ (mod $n$) for $s\in S$, is called an \emph{$s$-jump}. Given an integer $k\geq 1$, let $[k]=\{1, 2,\ldots, k\}$. In \cite{JJ2015}, Alva-Samos and Montellano-Ballesteros proved the following result.

\begin{thm}\textup{\cite{JJ2015}}
If $1\le k\le n-2$, then $\overset{\rightarrow}{rc}(C_n([k]))=\overset{\rightarrow}{src}(C_n([k]))=\lceil\frac{n}{k}\rceil$.
\end{thm}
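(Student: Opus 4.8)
Set $q=\lceil\tfrac{n}{k}\rceil$. The plan is to prove the two bounds $\overset{\rightarrow}{rc}(C_n([k]))\ge q$ and $\overset{\rightarrow}{src}(C_n([k]))\le q$; since $\overset{\rightarrow}{rc}(D)\le\overset{\rightarrow}{src}(D)$ always holds, these squeeze both parameters to $q$. For an ordered pair $(v_a,v_b)$ I write $\delta=(b-a)\bmod n$ for the forward gap. As every arc advances the index by at most $k$, the distance from $v_a$ to $v_b$ is exactly $\lceil\tfrac{\delta}{k}\rceil$, so $\diam(C_n([k]))=\lceil\tfrac{n-1}{k}\rceil$. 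This equals $q$ unless $n\equiv 1\pmod k$, in which case it equals $q-1$.

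For the lower bound I would first dispose of the case $n\not\equiv1\pmod k$, where $\diam=q$ and hence $\overset{\rightarrow}{rc}\ge\diam=q$ by the basic inequality $\diam(D)\le\overset{\rightarrow}{rc}(D)$. The substantive case is $n=(q-1)k+1$, which forces $q\ge3$ since $k\le n-2$. Here the key observation is that the pair $(v_i,v_{i-1})$ is at distance $\lceil\tfrac{n-1}{k}\rceil=q-1$, and since $n-1=(q-1)k$ must be covered by $q-1$ jumps each of length at most $k$, \emph{every} jump has length exactly $k$; thus the only shortest $v_i\to v_{i-1}$ path is $v_iv_{i+k}v_{i+2k}\cdots v_{i-1}$, using only $k$-jumps. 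Assuming a rainbow connected colouring $c$ with at most $q-1$ colours, the guaranteed rainbow $v_i\to v_{i-1}$ path has at most $q-1$ arcs, but distance $q-1$ forces it to be a geodesic; hence this unique geodesic is rainbow, for every $i$. Because $\gcd(k,n)=1$, the $k$-jumps form a single directed Hamiltonian cycle $e_0,e_1,\dots,e_{n-1}$, where $e_m$ is the $k$-jump out of $v_{mk\bmod n}$, and the rainbow requirement says that every window of $q-1$ consecutive $e_m$ is rainbow. Comparing the windows beginning at $m$ and at $m+1$ (both using all $q-1$ colours and overlapping in $q-2$ arcs) forces $c(e_m)=c(e_{m+q-1})$ for all $m$; since $\gcd(q-1,n)=1$ (as $n\equiv1\pmod{q-1}$), this makes $c$ constant on the $k$-jumps, contradicting that a window of size $q-1\ge2$ is rainbow. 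Therefore $\overset{\rightarrow}{rc}(C_n([k]))\ge q$.

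For the upper bound I would exhibit a strongly rainbow connected colouring with $q$ colours, the \emph{source-block colouring} $c(v_iv_j)=\lfloor\tfrac{i}{k}\rfloor\in\{0,1,\dots,q-1\}$, which partitions the vertices into consecutive blocks $V_0,\dots,V_{q-1}$ of sizes $k,\dots,k$ and $n-(q-1)k\le k$. Given $(v_a,v_b)$ with $\ell=\lceil\tfrac{\delta}{k}\rceil\le q$, the goal is a geodesic whose $\ell$ arcs start in $\ell$ \emph{distinct} blocks, which is then automatically rainbow. When the forward segment from $v_a$ to $v_b$ does not wrap past $v_0$, the standard geodesic with jumps $k,\dots,k,r$ has sources $v_a,v_{a+k},\dots,v_{a+(\ell-1)k}$ lying in the consecutive blocks $\lfloor\tfrac{a}{k}\rfloor,\dots,\lfloor\tfrac{a}{k}\rfloor+\ell-1\le\lfloor\tfrac{b}{k}\rfloor\le q-1$, so it is rainbow. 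The remaining task, and the main obstacle, is the wraparound case, where the naive geodesic can revisit a block after the index resets at $v_0$. I would resolve this by routing the geodesic to advance exactly one block per jump, cyclically through $V_{t_0},V_{t_0+1},\dots$: this is always possible because each block spans at most $k$ indices while each arc spans at most $k$, so from any vertex one can step into the next block, and the freedom in jump lengths (with $\delta\in[(\ell-1)k+1,\ell k]$) lets one tune the walk to terminate exactly at $v_b$ after $\ell\le q$ jumps. The $\ell$ sources then lie in $\ell$ distinct cyclic blocks, giving a rainbow geodesic and hence $\overset{\rightarrow}{src}(C_n([k]))\le q$.

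Combining the two bounds with $\overset{\rightarrow}{rc}\le\overset{\rightarrow}{src}$ gives $\overset{\rightarrow}{rc}(C_n([k]))=\overset{\rightarrow}{src}(C_n([k]))=q$. I expect the delicate point to be the wraparound routing: verifying that the ``advance one block per jump'' walk can simultaneously keep the sources in distinct blocks and land exactly at $v_b$ in the geodesic number of steps. The matching of block size and jump length (both at most $k$) is precisely what makes this feasible, and I would formalise it by an explicit greedy placement of the $\ell$ sources along the wrapped segment.
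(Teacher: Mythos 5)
Since the paper only quotes this theorem from \cite{JJ2015} and contains no proof of it, your proposal has to be judged on its own terms. Your lower bound is correct and complete: when $n\not\equiv 1\pmod k$ the diameter bound suffices, and when $n=(q-1)k+1$ your argument — uniqueness of the all-$k$-jump geodesics $v_i\to v_{i-1}$, the forced rainbowness of every window of $q-1$ consecutive arcs of the Hamiltonian cycle of $k$-jumps, the shift identity $c(e_m)=c(e_{m+q-1})$, and $\gcd(q-1,n)=1$ — is a valid (and rather clean) proof that $\overset{\rightarrow}{rc}(C_n([k]))\ge q$.

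The upper bound, however, has a genuine gap at exactly the point you flagged: the claim that one can always route a geodesic so that its arc-sources advance one block per jump is false. Take $n=10$, $k=3$, so $q=4$ and the blocks are $V_0=\{v_0,v_1,v_2\}$, $V_1=\{v_3,v_4,v_5\}$, $V_2=\{v_6,v_7,v_8\}$, $V_3=\{v_9\}$, and consider the pair $(v_8,v_7)$, with $\delta=9=\ell k$ and $\ell=3$. Your routing would place the three sources in $V_2,V_3,V_0$; but then the third source lies in $\{v_0,v_1,v_2\}$, from which $v_7$ is not reachable by a single arc (its forward distance is at least $5>k$). In fact, since $\delta=\ell k$, all three jumps must have length exactly $k$, so the unique geodesic is $v_8v_1v_4v_7$, whose sources lie in blocks $V_2,V_0,V_1$ — it \emph{skips} $V_3$, i.e.\ no geodesic for this pair advances one block per jump. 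The root cause is the short block $V_{q-1}$ of size $b'=n-(q-1)k<k$: if the forward segment passes through $V_{q-1}$ in its interior and $v_a$ has offset $r\ge b'$ inside its own block, then any walk whose sources occupy consecutive blocks covers a total displacement of at most $\ell k+b'-r-1$, which can be smaller than $\delta$, so no such walk ends at $v_b$. The colouring itself is fine and the theorem is true, but a correct proof needs a different case split, for instance: the geodesic with jumps $k,\dots,k,\rho$ (sources $v_a,v_{a+k},\dots,v_{a+(\ell-1)k}$) already has its sources in distinct blocks \emph{unless} $\ell=q$ and $r\ge b'$; and in that exceptional case (which forces $v_b$ into $v_a$'s block and forces each of the $q$ blocks to contain exactly one source) one verifies directly that a one-source-per-block geodesic is feasible. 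As written, your argument for $\overset{\rightarrow}{src}(C_n([k]))\le q$ does not go through.
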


Here, we shall similarly consider $\overset{\rightarrow}{rvc}(C_n([k]))$ and $\overset{\rightarrow}{srvc}(C_n([k]))$. This task turns out to be somewhat more complicated. Note firstly that if $\lfloor\frac{n}{2}\rfloor\le k\le n-2$, then diam$(C_n([k]))=2$, so that by Theorem \ref{pro2}(b), we have $\overset{\rightarrow}{rvc}(C_n([k]))=\overset{\rightarrow}{srvc}(C_n([k]))=1$. Also, when $k=1$, we have $C_n([1])=\rC_n$, so that $\overset{\rightarrow}{rvc}(C_n([1]))=\overset{\rightarrow}{srvc}(C_n([1]))=n-2$ for $n=3,4$, and $\overset{\rightarrow}{rvc}(C_n([1]))=\overset{\rightarrow}{srvc}(C_n([1]))=n$ for $n\ge 5$, by Proposition \ref{cyclepro}. Hence, we may restrict our attention to $2\le k\le\lfloor\frac{n}{2}\rfloor-1$.

\begin{thm}\label{circulantthm}
Let $2\le k\le\lfloor\frac{n}{2}\rfloor-1$.
\begin{enumerate}
\item[(a)] Let $n\not\equiv 0,1\textup{ (mod }k)$. Then $\overset{\rightarrow}{rvc}(C_n([k]))=\lceil\frac{n}{k}\rceil-1$, and $\overset{\rightarrow}{srvc}(C_n([k]))=\lceil\frac{n}{k}\rceil$, where the latter holds for $n$ sufficiently large.
\item[(b)] Let $n=ak+1$, where $a\ge 3$.
\begin{enumerate}
\item[(i)] If $a-1\mid n$, then $\overset{\rightarrow}{rvc}(C_n([k]))=\overset{\rightarrow}{srvc}(C_n([k]))=\frac{n-1}{k}-1$.
\item[(ii)] If $a-1\nmid n$, then $\overset{\rightarrow}{rvc}(C_n([k]))=\frac{n-1}{k}$, and
\[
\overset{\rightarrow}{srvc}(C_n([k]))=
\left\{
\begin{array}{ll}
\frac{n-1}{k} & \textup{\emph{if} }a<k+2\textup{\emph{,}}\\[1ex]
\frac{n-1}{k}+1=\lceil\frac{n}{k}\rceil & \textup{\emph{if} }a>k+2.
\end{array}
\right.
\]
\end{enumerate}
\item[(c)] Let $n=ak$, where $a\ge 3$. 
\begin{enumerate}
\item[(i)] If $a=3,4$, then $\overset{\rightarrow}{rvc}(C_n([k]))=\overset{\rightarrow}{srvc}(C_n([k]))=\frac{n}{k}-1$.
\item[(ii)] If $a\ge 5$, then $\overset{\rightarrow}{rvc}(C_n([k]))\in\{\frac{n}{k}-1,\frac{n}{k}\}$, and $\overset{\rightarrow}{srvc}(C_n([k]))=\frac{n}{k}$.
\end{enumerate}
\end{enumerate}
\end{thm}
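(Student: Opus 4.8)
My first step is to pin down the metric of $D=C_n([k])$. Each $v_i$ has out-neighbours exactly $v_{i+1},\dots,v_{i+k}$, so $d(v_i,v_j)=\lceil\delta/k\rceil$ with $\delta=(j-i)\bmod n$, a geodesic being any jump sequence $s_1,\dots,s_g\in\{1,\dots,k\}$ with $\sum s_t=\delta$ and $g=\lceil\delta/k\rceil$. Hence $\diam(D)=\lceil(n-1)/k\rceil$, which equals $\lceil n/k\rceil$ in (a) and equals $a$ in (b) and (c); this already supplies the lower bound $\diam(D)-1\le\overset{\rightarrow}{rvc}(D)$ from (\ref{pro1eq}). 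The pivotal observation is that a geodesic is \emph{unique} precisely when $k\mid\delta$ (every jump is then forced to equal $k$). Consequently a farthest pair $(v_i,v_{i-1})$, with $\delta=n-1$, has a unique geodesic if and only if $k\mid(n-1)$, i.e.\ exactly in case (b), where $n-1=ak$; its $a-1$ internal vertices $v_{i+k},v_{i+2k},\dots,v_{i+(a-1)k}$ must then all receive distinct colours.

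The structure is governed by $\gcd(k,n)$. In case (b), $\gcd(k,n)=1$, so reading the vertices along the step-$k$ Hamiltonian cycle $w_t=v_{tk}$ relabels $V(D)$ as a single cyclic sequence $w_0,\dots,w_{n-1}$, and the unique antipodal geodesics become the windows $w_{t+1},\dots,w_{t+a-1}$ of $a-1$ consecutive vertices. Thus rainbow-vertex-connection with $a-1$ colours forces every length-$(a-1)$ cyclic window to be rainbow, which — by a short periodicity argument — is possible with $a-1$ colours if and only if $(a-1)\mid n$. This is precisely the split between (b)(i) and (b)(ii), and explains why $\overset{\rightarrow}{rvc}$ is $\frac{n-1}{k}-1$ in the divisible case and rises to $\frac{n-1}{k}$ otherwise. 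In case (c), $\gcd(k,n)=k$, so the step-$k$ map instead splits $D$ into $k$ disjoint $a$-cycles, and the farthest pairs (with $\delta=n-1$ not a multiple of $k$) have non-unique geodesics; this is why (c)(i) sits at the trivial value $\frac{n}{k}-1$ and why (c)(ii) only brackets $\overset{\rightarrow}{rvc}$. In case (a) the farthest pairs likewise admit non-unique geodesics, so $\overset{\rightarrow}{rvc}=\diam(D)-1$ is attained.

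For the upper bounds I would use block/window colourings, $c(v_m)\approx\lfloor m/k\rfloor$ (or the periodic window pattern in the relabelled coordinates for (b)), inserting the leftover residue vertices so that no short window repeats a colour; along a canonical all-$k$-jump geodesic the internal vertices then fall into distinct blocks and the path is rainbow, the only danger being wrap-around past $v_{n-1}$, which the divisibility hypotheses control. To upgrade ``rainbow path'' to ``rainbow geodesic'' for the strong parameter, I would prove a circulant analogue of Claim \ref{cycleclm2}: if every colour-repeating geodesic admits a strictly shorter rainbow alternative, the colouring is (strongly) rainbow-vertex-connected. For the lower bounds exceeding $\diam(D)-1$ I argue by contradiction: with too few colours, pigeonhole yields two equally coloured vertices forced to lie together inside a unique short geodesic, which then cannot be rainbow and has no alternative.

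The main difficulty will be the tight $\overset{\rightarrow}{srvc}$ lower bounds in the regimes where the answer exceeds $\overset{\rightarrow}{rvc}$: the ``$n$ sufficiently large'' clause in (a), the threshold $a\lessgtr k+2$ in (b)(ii), and the $a\ge 5$ cutoff in (c)(ii). In (a), since $\diam(D)-1$ colours already realise $\overset{\rightarrow}{rvc}$, one must show the strong version genuinely costs one extra colour for large $n$: with $\diam(D)-1$ colours every maximal geodesic would have to use each colour exactly once internally, and I expect the contradiction to come from counting how many near-antipodal geodesics must simultaneously be rainbow, which is impossible only once $n$ is large. The analogous count, now sensitive to whether a forced colour-collision can be diverted onto a non-geodesic (longer) path, is what produces the $k+2$ threshold in (b)(ii) and the $a\ge5$ cutoff in (c)(ii). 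I would organise the whole proof around one or two circulant versions of Claims \ref{cycleclm1} and \ref{cycleclm2}, and then dispatch the five cases according to the residue $n\bmod k$ and the secondary divisibilities; locating the correct ``critical pair'' in each regime, rather than any single estimate, is where the bulk of the (largely computational) work lies.
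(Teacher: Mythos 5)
Your skeleton genuinely matches the paper's: the metric $d(v_i,v_j)=\lceil\delta/k\rceil$ with uniqueness of geodesics exactly when $k\mid\delta$, the $\gcd(n,k)$ dichotomy (one Hamiltonian $k$-jump cycle in (b) versus $k$ disjoint $a$-cycles in (c)), the window/periodicity argument for (b)(i) and the rvc lower bound in (b)(ii), block colourings for the bounds in (\ref{circulanteq1}), and pigeonhole-inside-a-unique-geodesic for the $\overset{\rightarrow}{srvc}$ lower bounds. But there is a genuine gap at the technical heart of the theorem: the upper bound $\overset{\rightarrow}{rvc}(C_n([k]))\le\lceil\frac{n}{k}\rceil-1$ whenever $k\nmid n$ (the paper's Claim \ref{circulantclm1}), which is exactly what parts (a) and (b)(ii) require. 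You justify it by ``farthest pairs admit non-unique geodesics, so $\overset{\rightarrow}{rvc}=\diam(D)-1$ is attained,'' plus a one-line sketch of block colourings with leftover vertices inserted ``so that no short window repeats a colour.'' That inference is invalid: in case (c)(ii) the farthest pairs also have non-unique geodesics (there $\delta=n-1$ and $k\nmid n-1$), yet the same conclusion does not follow --- indeed the paper cannot determine $\overset{\rightarrow}{rvc}$ in that case and poses it as an open problem immediately after the theorem. The paper's actual argument is a construction plus a rerouting trick: colours $0,\dots,a-1$ are placed cyclically along each of the $g=\gcd(n,k)$ $k$-jump cycles, which unavoidably creates a seam of ``bad'' vertices $U_r$ whose windows do repeat colours; for $x\in U_r$ one abandons geodesics entirely and uses the path $xx'\cup G(x',y)$, where $x'$ is reached from $x$ by an initial $(k-g)$-jump (staying on the same cycle), and the inequalities $b\le k-g$ and $|V(G(x',y))-y|\le a$ make this strictly longer path rainbow. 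This detour --- available to $\overset{\rightarrow}{rvc}$ but never to $\overset{\rightarrow}{srvc}$, and existing only because $k\nmid n$ --- is the missing idea, and nothing in your sketch produces it.

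Two secondary points. First, (c)(i) is not ``trivial'': for $a=3,4$ one must exhibit strongly rainbow vertex-connected colourings with $a-1$ colours, and the paper's colourings require care (the forced colour repetition on each $4$-cycle must be placed antipodally, and several remainder-jump cases checked); non-uniqueness of geodesics cannot be the explanation, since it holds equally for $a\ge 5$ where $\overset{\rightarrow}{srvc}=n/k$. Second, your account of the (b)(ii) threshold as whether a collision ``can be diverted onto a non-geodesic (longer) path'' is off for the strong parameter --- no diversion is ever allowed for $\overset{\rightarrow}{srvc}$; the true dichotomy is whether the $n$ vertices can be coloured with $a$ colours so that equal colours lie at distance at least $a-1$ along the Hamiltonian $k$-jump cycle (a gap count over the $\ge k+1$ occurrences of some colour yields exactly the threshold $a<k+2$), versus pigeonhole forcing two equal colours inside the unique geodesic $G(v_0,v_{n-1})$. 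Your final paragraph does state this lower-bound mechanism correctly, and the (c)(ii) lower bound follows the same pattern via Proposition \ref{cyclepro} applied to one $k$-jump cycle; but as written the proposal is an outline whose two hardest components --- Claim \ref{circulantclm1} and the (c)(i) colourings --- are absent.
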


\begin{proof}
For convenience, we write $D=C_n([k])$ throughout. We first derive some facts about $D$, and auxiliary bounds for $\overset{\rightarrow}{rvc}(D)$ and $\overset{\rightarrow}{srvc}(D)$. Note that for $x,y\in V(D)$, an $x-y$ geodesic is obtained by taking as many successive $k$-jumps as possible, starting from $x$, until we either meet $y$, or some other vertex $w\neq y$ from which we can reach $y$ with a ``remainder'' $k'$-jump, for some $1\le k'\le k-1$. We denote this geodesic by $G(x,y)$. If $x=v_i$ and $y=v_j$ for some $0\le i,j\le n-1$, then $d(x,y)=\lceil\frac{j-i}{k}\rceil$ if $i\le j$, and $d(x,y)=\lceil\frac{j+n-i}{k}\rceil$ if $i>j$. In both cases, the length of $G(x,y)$ is $d(x,y)$. From this, it follows that diam$(D)=\lceil\frac{n-1}{k}\rceil$.
%Note that for $0\le i\le j\le n-1$, we have $d(v_i,v_j)=d(v_0, v_{j-i})$ and $d(v_j,v_i)=d(v_0,v_{i+n-j})$. Also, for $0\le i\le n-1$, we have $d(v_0,v_i)=\lceil\frac{i}{k}\rceil$, and 
%partition $\mathcal P=\{V_1,V_2,\dots , V_{\lceil n/k\rceil}\}$, where $V_i=\{v_j:(i-1)k\le j\le ik-1\}$ for $1\le i\le \lfloor\frac{n}{k}\rfloor$, and if $\lfloor\frac{n}{k}\rfloor\neq\lceil\frac{n}{k}\rceil$, then $V_{\lceil n/k\rceil}=\{v_j: k\lfloor\frac{n}{k}\rfloor\le j\le n-1\}$. It was proved in \cite{JJ2015} that for every pair $v_i,v_j\in V(D)$, there is a $v_i-v_j$ geodesic $P$ such that for every $V_p\in\mathcal P$, we have $|V_p\cap V(P-v_j)|\le 1$. From this, clearly the vertex-colouring where all vertices of $V_i$ are given colour $i$, for $1\le i\le \lceil\frac{n}{k}\rceil$, is strongly rainbow vertex-connected. 
Next, we define the vertex-colouring $c$ of $D$, using colours $0,1,\dots,\lceil\frac{n}{k}\rceil-1$, where $c(v_i)=\lfloor\frac{i}{k}\rfloor$ for $0\le i\le n-1$. Note that for every $i$, by taking the $k$-jump from $v_i$ to $v_{i+k}$, the colour is increased by $1$, modulo $\lceil\frac{n}{k}\rceil$. For $x,y\in V(D)$, since the number of internal vertices of $G(x,y)$ is at most $\lceil\frac{n-1}{k}\rceil-1<\lceil\frac{n}{k}\rceil$, and these vertices are connected by successive $k$-jumps, we have $G(x,y)$ is a rainbow $x-y$ geodesic. Hence, $c$ is a strongly rainbow vertex-connected colouring for $D$. By (\ref{pro1eq}), we have
\begin{equation}
\Big\lceil\frac{n-1}{k}\Big\rceil-1\le\overset{\rightarrow}{rvc}(D)\le\overset{\rightarrow}{srvc}(D)\le\Big\lceil\frac{n}{k}\Big\rceil.\label{circulanteq1}
\end{equation}

Next, we prove a stronger upper bound for $\overset{\rightarrow}{rvc}(D)$.

\begin{claim}\label{circulantclm1}
If $n\not\equiv 0$\textup{ (mod }$k)$, then
\begin{equation}
\overset{\rightarrow}{rvc}(D)\le\Big\lceil\frac{n}{k}\Big\rceil-1.\label{circulanteq2}
\end{equation}
\end{claim}

\begin{proof}
Let $n=ak+b$ for some $a\ge 2$ and $1\le b\le k-1$, and $\gcd(n,k)=g$. We have $g$ vertex-disjoint directed cycles $\rC^0,\rC^1,\dots,\rC^{g-1}$, where each cycle is a copy of $\rC_{n/g}$, with $\rC^r=v_rv_{r+k}v_{r+2k}\cdots v_{r+(n/g-1)k}v_r$ for every $0\le r\le g-1$. Observe that each $\rC^r$ consists of $\frac{n}{g}$ $k$-jumps of $D$. Furthermore, if we identify the indices of the vertices of $D$ with the cyclic group $\mathbb Z_n$, then the indices of the vertices of $\rC^0$ form the cyclic subgroup $\mathbb Z_{n/g}$, which is generated by $k$, and also by $g$. For $0\le r\le g-1$, the indices of the vertices of $\rC^r$ can be identified with the coset $\mathbb Z_{n/g}+r$. Now, let $c$ be the vertex-colouring of $D$, using colours $0,1,\dots,a-1$, where $c(v_{r+\ell k})\equiv \ell$ (mod $a)$ for $0\le \ell\le \frac{n}{g}-1$ and $0\le r\le g-1$. Since $a=\lceil\frac{n}{k}\rceil-1$, the claim follows if we can show that $c$ is a rainbow vertex-connected colouring for $D$. 

For $0\le r\le g-1$, let $U_r=\{v_{r+(n/g-s)k}: a\ge s\ge 2\}\subset V(\rC^r)$, and $V_r=V(\rC^r)\setminus U_r$. Note that any $a$ consecutive vertices of $\rC^r$ have distinct colours if the first vertex is in $(V_r\setminus\{v_{r+(n/g-1)k}\})\cup\{v_{r+(n/g-a)k}\}$. Let $x,y\in V(D)$, with $x\in V(\rC^r)$ and $xy\not\in A(D)$. Note that $G(x,y)$ has at most $\lceil\frac{n-1}{k}\rceil-1\le a$ internal vertices. Thus if $x\in V_r$, then $G(x,y)$ is a rainbow $x-y$ geodesic. Otherwise, let $x\in U_r$. Consider taking a $(k-g)$-jump from $x$ to $x'$. We have $x'\in V(\rC^r)$, since the indices of $x$ and $x'$ both belong to the coset $\mathbb Z_{n/g}+r$. Also, consider the directed cycle $\rC=v_0v_1\cdots v_{n-1}v_0$, and the positions of the vertices of $U_r\cup\{v_{r+(n/g-1)k}\}$ in $\rC$. Any two consecutive vertices $v_{r+(n/g-s)k}$ and $v_{r+(n/g-s+1)k}$, for some $a\ge s\ge 2$, are connected by $k$ arcs of $\rC$, so that $v_{r+(n/g-1)k}$ is connected to $v_{r+(n/g-a)k}$ by $n-(a-1)k>k$ arcs of $\rC$. It follows that $x'\not\in U_r\cup\{v_{r+(n/g-1)k}\}$, and thus 
$x'\in V_r\setminus\{v_{r+(n/g-1)k}\}$. Now, note that since $g\mid n-ak=b$, $g\mid k$ and $b<k$, we have $b\le k-g$. This means that $G(x',y)-y$ has at most $\lceil\frac{n-1-(k-g)}{k}\rceil=a-1+\lceil\frac{b+g-1}{k}\rceil\le a$ vertices. Therefore, $xx'\cup G(x',y)$ is a rainbow $x-y$ path. This proves Claim \ref{circulantclm1}.
\end{proof}

Now, we continue with the proof of Theorem \ref{circulantthm}.\\[1ex]
\indent (a) Let $n\not\equiv 0,1\textup{ (mod }k)$. Since $\lceil\frac{n-1}{k}\rceil=\lceil\frac{n}{k}\rceil$ if and only if $n\not\equiv 1$ (mod $k)$, we instantly have $\overset{\rightarrow}{rvc}(D)=\lceil\frac{n}{k}\rceil-1$ by (\ref{circulanteq1}) and (\ref{circulanteq2}). To prove the second part, it suffices to show that $\overset{\rightarrow}{srvc}(D)\ge\lceil\frac{n}{k}\rceil$ for $n$ sufficiently large, by (\ref{circulanteq1}). Suppose that we have a vertex-colouring of $D$, using at most $\lceil\frac{n}{k}\rceil-1$ colours. Let $\gcd(n,k)=g$ and $k=k'g$. We have the directed cycle $\rC=v_0v_kv_{2k}\cdots v_{(n/g-1)k}v_0$, which is a copy of $\rC_{n/g}$. Since $k'(\lceil\frac{n}{k}\rceil-1)<\frac{n}{g}$, some colour occurs at least $k'+1$ times in $\rC$. Without loss of generality, $v_k$ and $v_{\ell k}$ have the same colour, for some $2\le \ell\le\lfloor\frac{n}{(k'+1)g}\rfloor+1$. Since $k\nmid n$, we have $v_0v_kv_{2k}\cdots v_{\lfloor n/k\rfloor k}$ is the unique $v_0-v_{\lfloor n/k\rfloor k}$ geodesic. This is not rainbow if $\lfloor\frac{n}{(k'+1)g}\rfloor+1\le\lfloor\frac{n}{k}\rfloor-1$, which is true for $n\ge 2k(k+1)$.\\[1ex]
\indent (b) Let $n=ak+1$, where $a\ge 3$. Note that since $\gcd(n,k)=1$, we have the directed cycle $\rC=v_0v_kv_{2k}\cdots v_{(n-1)k}v_0$, which is a copy of $\rC_n$ in $D$ consisting of all the $k$-jumps. Observe that for $x,y\in V(D)$, the vertices of $G(x,y)-y$ form a consecutive set of at most $\frac{n-1}{k}=a$ vertices of $\rC$, and they are connected by successive $k$-jumps.\\[1ex]
\indent (i) Suppose that $a-1\mid n$. Let $c$ be the vertex-colouring of $D$ using colours $0,1,\dots,a-2$, where $c(v_{\ell k})\equiv \ell$ (mod $a-1)$ for $0\le\ell\le n-1$. Then, as we visit the vertices $v_0, v_k, v_{2k}, \dots$ along $\rC$, the colours appear in blocks of $0, 1, 2, \dots, a-2$, repeating $\frac{n}{a-1}$ times, so that any $a-1$ consecutive vertices of $\rC$ have distinct colours. Now, for $x,y\in V(D)$, the number of internal vertices of $G(x,y)$ is at most $a-1$, so it follows that $G(x,y)$ is a rainbow $x-y$ geodesic. Therefore, $\overset{\rightarrow}{srvc}(D)\le a-1=\frac{n-1}{k}-1$, and part (i) follows from (\ref{circulanteq1}).\\[1ex]
\indent (ii) Let $a-1\nmid n$. Suppose that we have a rainbow vertex-connected colouring of $D$, using $\frac{n-1}{k}-1=a-1$ colours. Note that for any $0\le i\le n-1$, we have $G(v_i,v_{i-1})=v_iv_{i+k}v_{i+2k}\cdots v_{i+ak}$ is the unique $v_i-v_{i-1}$ geodesic, which contains $a-1$ internal vertices, and so must be rainbow. This means that any $a-1$ consecutive vertices of $\rC$ must have distinct colours, which is a contradiction since $a-1\nmid n$. Thus, $\overset{\rightarrow}{srvc}(D)\ge\overset{\rightarrow}{rvc}(D)\ge \frac{n-1}{k}$. Since $\lceil\frac{n}{k}\rceil-1=\frac{n-1}{k}$, we have $\overset{\rightarrow}{rvc}(D)=\frac{n-1}{k}$ by (\ref{circulanteq2}).
%
%Next, we let $c$ be the vertex-colouring of $D$ using colours $0,1,\dots,a-1$, where $c(v_{\ell k})\equiv \ell$ (mod $t)$ for $0\le\ell\le n-1$. Then, as we visit the vertices $v_0, v_k, v_{2k}, \dots$ along $\rC$, the colours appear in blocks of $0, 1, 2, \dots, a-1$, repeating $k$ times, followed by $0$. Thus, any $t$ consecutive vertices of $\rC$, not containing both $v_{(n-1)k}$ and $v_0$, have distinct colours. Let $u,v\in V(D)$. If $v_{(n-1)k}$ and $v_0$ are not both internal vertices of $G(u,v)$, then as before, $G(u,v)$ is a rainbow $u-v$ geodesic. Otherwise, if $v_{(n-1)k}$ and $v_0$ are both internal vertices of $G(u,v)$, then we must have $u=v_i$ and $v=v_j$, where $i=\ell k+1$ for some $1\le\ell\le t-2$, and $1\le j<i$. Note that $G(v_{i+1},v)$ contains neither $v_{(n-1)k}$ or $v_0$. Now, $uv_{i+1}\cup G(v_{i+1}, v)$ contains at most $t$ internal vertices, and is therefore a rainbow $u-v$ directed path. Hence, we have $\overset{\rightarrow}{rvc}(D)\le t=\frac{n-1}{k}$, and the first part of (ii) holds.

Now, let $a<k+2$. We define the vertex-colouring $c$ of $D$, using colours $0,1,\dots, a-1$, as follows. First, let $c(v_0)=c(v_{(a-1)k})=c(v_{2(a-1)k})=\cdots=c(v_{(a-1)k^2})=0$. We then colour the remaining vertices with colours $1,2,\dots, a-1, 1,2, \dots, a-1,\dots$, starting from $v_k, v_{2k},\dots$ and going around $\rC$, omitting the vertices already coloured with $0$. Note that colour $0$ appears $k+1$ times, and every other colour appears $k$ times. In $\rC$, the distance from $v_{(a-1)k^2}$ to $v_0$ is $n-(a-1)k=k+1>a-1$. It follows that any two vertices with the same colour have distance at least $a-1$ between them in $\rC$, and hence any $a-1$ consecutive vertices of $\rC$ have distinct colours. As before, if $x,y\in V(D)$, then $G(x,y)$ is a rainbow $x-y$ geodesic. We have $\overset{\rightarrow}{srvc}(D)\le a=\frac{n-1}{k}$. 

Finally, let $a>k+2$. Suppose that we have a vertex-colouring of $D$, using at most $\frac{n-1}{k}=a$ colours. Then, some colour occurs at least $\lceil\frac{n}{a}\rceil=k+1$ times, so that there are two vertices in this colour within a distance of $\lfloor\frac{n}{k+1}\rfloor=\lfloor a-\frac{a-1}{k+1}\rfloor\le a-2$ in $\rC$. Thus, we may assume that $v_k$ and $v_{\ell k}$ have the same colour, for some $2\le \ell\le a-1$. But then, the unique $v_0-v_{n-1}$ geodesic is $G(v_0,v_{n-1})$, which is not rainbow since it contains $v_k$ and $v_{\ell k}$ as internal vertices. Therefore, $\overset{\rightarrow}{srvc}(D)\ge\frac{n-1}{k}+1=\lceil\frac{n}{k}\rceil$, and we have equality by (\ref{circulanteq1}). This completes the proof of part (ii), and of (b).\\[1ex]
\indent (c) Let $n=ak$, where $a\ge 3$.\\[1ex]
\indent (i) Let $a=3,4$. By (\ref{circulanteq1}), it suffices to show that $\overset{\rightarrow}{srvc}(D)\le \frac{n}{k}-1$. Let $V_\ell=\{v_i:\ell k\le i\le (\ell+1)k-1\}$ for $0\le\ell\le a-1$. Define the vertex-colouring $c$ of $D$, using colours $0,1,\dots,a-2$, as follows. For $a=3$, let $c(v_0)=c(v_k)=0$ and $c(v_{2k})=1$. For $a=4$, let $c(v_0)=c(v_{2k})=0$, $c(v_k)=1$ and $c(v_{3k})=2$. Then for $0\le\ell\le a-1$ and $1\le r\le k-1$, let $c(v_{\ell k+r})\equiv c(v_{\ell k})+r$ (mod $a-1)$. We show that $c$ is a strongly rainbow-vertex connected colouring for $D$. Let $v_i,v_j\in V(D)$. We will show that there is a rainbow $v_i-v_j$ geodesic. 

For $a=3$, it suffices to consider $d(v_i,v_j)=3$. Thus, we have $v_j\in\{v_{i+2k+1},\dots,v_{i+3k-1}\}$. Note that a $k$-jump starting at a vertex in $V_1\cup V_2$ connects two vertices of distinct colours. The same is true for a $(k-1)$-jump starting at a vertex in $V_0\setminus\{v_0\}$. If $v_i\in V_0\cup V_1$, we can take $G(v_i,v_j)=v_iv_{i+k}v_{i+2k}v_j$, and note that $v_{i+k}\in V_1\cup V_2$. If $v_i=v_{2k}$, we take $v_{2k}v_{3k-1}v_{4k-1}v_j$, and note that $v_{3k-1}\in V_2$. If $v_i\in V_2\setminus\{v_{2k}\}$, then we take $v_iv_{i+k}v_{i+2k-1}v_j$, and note that $v_{i+k}\in V_0\setminus\{v_0\}$. In every case, we have a rainbow $v_i-v_j$ geodesic. Hence, $\overset{\rightarrow}{srvc}(D)\le 2=\frac{n}{k}-1$.

Now, let $a=4$. If $d(v_i,v_j)=3$, then $G(v_i,v_j)=v_iv_{i+k}v_{i+2k}v_j$ is a rainbow $v_i-v_j$ geodesic. Thus, it remains to consider $d(v_i,v_j)=4$, so that $v_j\in\{v_{i+3k+1},\dots,v_{i+4k-1}\}$. If $v_i\in V_0\cup V_2$, then we take $G(v_i,v_j)=v_iv_{i+k}v_{i+2k}v_{i+3k}v_j$. If $v_i\in\{v_k,v_{3k}\}$, then we take $v_iv_{i+k-1}v_{i+2k-1}v_{i+3k-1}v_j$. If $v_i\in V_1\setminus\{v_k\}$, then we take $v_iv_{i+k}v_{i+2k-1}v_{i+3k-1}v_j$. Note that $c(v_{i+k})\equiv i-k$, $c(v_{i+2k-1})\equiv i-k+1$, and $c(v_{i+3k-1})\equiv i-k-1$ (mod $3)$. If $v_i\in V_3\setminus\{v_{3k}\}$, then we take $v_iv_{i+k}v_{i+2k}v_{i+3k-1}v_j$. Note that $c(v_{i+k})\equiv i$, $c(v_{i+2k})\equiv i+1$, and $c(v_{i+3k-1})\equiv i-1$ (mod $3)$. In every case, we have a rainbow $v_i-v_j$ geodesic. Hence, $\overset{\rightarrow}{srvc}(D)\le 3=\frac{n}{k}-1$.\\[1ex]
\indent (ii) Let $a\ge 5$. Clearly by (\ref{circulanteq1}), we have $\overset{\rightarrow}{rvc}(D)\in\{\frac{n}{k}-1,\frac{n}{k}\}$. It remains and suffices to show that  $\overset{\rightarrow}{srvc}(D)\ge \frac{n}{k}$, by (\ref{circulanteq1}). Let $\rC=v_0v_kv_{2k}\cdots$ $v_{(n/k-1)k}v_0$, which is copy of $\rC_{n/k}$. Suppose that we have a vertex-colouring of $D$, using $\frac{n}{k}-1$ colours. By Proposition \ref{cyclepro}, this colouring is not rainbow vertex-connected for $\rC$ if $n\ge 5k$, so there exist $x,y\in V(\rC)$ with no rainbow $x-y$ path in $\rC$. In $D$, the unique $x-y$ geodesic is $x\rC y$, which is not rainbow. It follows that $\overset{\rightarrow}{srvc}(D)\ge \frac{n}{k}$.
\end{proof}

In Theorem \ref{circulantthm}, we have been unable to determine $\overset{\rightarrow}{rvc}(C_n([k]))$ when $n\equiv 0$ (mod $k)$, and $\overset{\rightarrow}{srvc}(C_n([k]))$ for small $n\not\equiv 0,1$ (mod $k)$. The first of these two tasks appears more interesting, and we leave it as an open problem.

\begin{prob}
Let $n=ak$, where $k\ge 2$ and $a\ge 5$. Determine $\overset{\rightarrow}{rvc}(C_n([k]))$.
\end{prob}

\section{Tournaments}\label{tournamentssect}

We now study the (strong) rainbow vertex-connection numbers of tournaments. Our first aim is to consider the range of values that the (strong) rainbow vertex-connection number can take, over all strongly connected tournaments of a given order $n\ge 3$. Clearly, ${\rC}_3$ is the only such tournament of order $3$, and $\overset{\rightarrow}{rvc}({\rC}_3)=\overset{\rightarrow}{srvc}({\rC}_3)=1$. For $n=4$, there is also a unique tournament, namely $T_4$, which is the union of ${\rC}_4=v_0v_1v_2v_3v_0$ and the arcs $v_0v_2$, $v_1v_3$. We have $\overset{\rightarrow}{rvc}(T_4)=\overset{\rightarrow}{srvc}(T_4)=2$.

For the rainbow connection analogue, we have $\overset{\rightarrow}{rc}({\rC}_3)=\overset{\rightarrow}{rc}(T_4)=3$. For the rest of this section, we mainly restrict our attention to $n\ge 5$. Dorbec et al.~\cite{PIE2014} proved the following results.

\begin{thm}\label{Dorbecetaltourthm1}\textup{\cite{PIE2014}}
If $T$ is a strongly connected tournament with $n\ge 5$ vertices, then $2\le\overset{\rightarrow}{rc}(T)\le n-1$.
\end{thm}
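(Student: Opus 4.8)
The plan is to prove the two bounds separately, the lower one being immediate and the upper one requiring an explicit arc-colouring. For the lower bound, since $T$ is a tournament it contains an asymmetric arc $uv$, so there is no $v$-$u$ path of length $1$ and hence $d(v,u)\ge 2$; thus $\textup{diam}(T)\ge 2$, and using the inequality $\textup{diam}(T)\le\overset{\rightarrow}{rc}(T)$ from the introduction we get $\overset{\rightarrow}{rc}(T)\ge 2$. The whole interest therefore lies in showing $\overset{\rightarrow}{rc}(T)\le n-1$. The guiding observation is that a directed Hamiltonian cycle by itself forces $n$ colours (each ``backward'' ordered pair has a unique path using $n-1$ of the $n$ cycle-arcs), so the point is to save exactly one colour by exploiting a chord of the tournament.

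First I would locate a vertex $v_0$ such that $T-v_0$ is still strongly connected. This is a classical fact about strong tournaments valid for $n\ge 4$ (it is essentially the inductive content behind Camion's theorem, and follows from Moon's structural results); it is the one genuinely structural ingredient. Since $T-v_0$ is then a strong tournament on $n-1\ge 4$ vertices, by Camion's theorem it has a Hamiltonian cycle $C'$. Because $T$ is strongly connected, $v_0$ has at least one in-neighbour and at least one out-neighbour among the vertices of $C'$; traversing $C'$ and recording, for each vertex, whether it is an in- or an out-neighbour of $v_0$, there must be a position where an in-neighbour is immediately followed along $C'$ by an out-neighbour. Relabelling so that $C'=v_1v_2\cdots v_{n-1}v_1$ with $v_{n-1}\to v_0\to v_1$ and the closing arc $v_{n-1}v_1\in A(C')$, I obtain the Hamiltonian cycle $v_0v_1\cdots v_{n-1}v_0$ of $T$ together with the crucial chord $v_{n-1}v_1$ (this is exactly the standard ``insert $v_0$ into an arc of $C'$'' argument).

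With this structure I would colour as follows: give the $n-1$ arcs of $C'$ the $n-1$ distinct colours $1,\dots,n-1$, say colour $i$ on $v_iv_{i+1}$ for $1\le i\le n-2$ and colour $n-1$ on the closing arc $v_{n-1}v_1$; colour both $v_0v_1$ and $v_{n-1}v_0$ with colour $n-1$; and colour every remaining arc of $T$ arbitrarily. Then every ordered pair inside $\{v_1,\dots,v_{n-1}\}$ is joined by a rainbow path lying in $C'$, since all arcs of $C'$ carry distinct colours and $C'$ is strongly connected. For a pair $(v_0,v_j)$ the path $v_0v_1\cdots v_j$ uses colour $n-1$ only on the arc $v_0v_1$ (it avoids the closing arc $v_{n-1}v_1$), and its other colours $1,\dots,j-1$ are distinct; for a pair $(v_j,v_0)$ the path $v_j\cdots v_{n-1}v_0$ uses colour $n-1$ only on the arc $v_{n-1}v_0$, with other colours $j,\dots,n-2$ distinct. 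In every case we produce a rainbow path, so the colouring is rainbow connected and $\overset{\rightarrow}{rc}(T)\le n-1$.

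The step I expect to be the real obstacle is the very first one: guaranteeing a vertex $v_0$ whose deletion leaves $T$ strongly connected (equivalently, Hamiltonian). Everything after it is an elementary insertion argument together with a routine verification. One cannot replace this by the naive attempt ``take any Hamiltonian cycle and hope some distance-two chord points forward'': for vertex-transitive tournaments such as the quadratic-residue tournament on $5$ vertices, a given Hamiltonian cycle may have all its distance-two chords oriented backward, and only a \emph{different} Hamiltonian cycle exhibits the forward chord. Passing through the deletion lemma avoids this difficulty cleanly. If a self-contained treatment were desired, I would prove that lemma directly for $n\ge 4$ by taking a Hamiltonian cycle of $T$ and analysing the acyclic condensation of $T-v$, showing that not every vertex can be essential for strong connectivity; this is where the only real care is needed.
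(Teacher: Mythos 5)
The paper does not actually prove this statement: Theorem \ref{Dorbecetaltourthm1} is quoted from \cite{PIE2014} without proof, so there is no in-paper argument to measure yours against; it can only be judged on its own merits, and on those it is correct. The lower bound is fine: a tournament contains an asymmetric arc $uv$, so $d(v,u)\ge 2$ and $\overset{\rightarrow}{rc}(T)\ge\diam(T)\ge 2$ (equivalently, Theorem \ref{pro2}(a) of this paper rules out $\overset{\rightarrow}{rc}(T)=1$). For the upper bound, your three ingredients all check out: Moon's vertex-pancyclicity gives a cycle of length $n-1$ in $T$, and its missed vertex $v_0$ leaves $T-v_0$ strong, hence Hamiltonian by Camion; since $T$ is strong, $v_0$ has both an in-neighbour and an out-neighbour on the Hamiltonian cycle $C'$ of $T-v_0$, so some arc of $C'$ runs from an in-neighbour to an out-neighbour of $v_0$, which after relabelling yields the Hamiltonian cycle $v_0v_1\cdots v_{n-1}v_0$ of $T$ together with the forward chord $v_{n-1}v_1$. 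Your colouring (distinct colours $1,\dots,n-1$ on the arcs of $C'$, with the colour $n-1$ of the chord $v_{n-1}v_1$ reused on both $v_{n-1}v_0$ and $v_0v_1$) is indeed rainbow connected: ordered pairs inside $V(C')$ are joined by subpaths of $C'$, which are rainbow because all arcs of $C'$ have distinct colours, while pairs involving $v_0$ use paths in which colour $n-1$ appears only on the single arc incident with $v_0$ and the remaining colours lie in $\{1,\dots,n-2\}$ and are pairwise distinct. Your cautionary side remark is also accurate: in the circulant tournament $C_5(\{1,2\})$ the Hamiltonian cycle $v_0v_2v_4v_1v_3v_0$ has all of its distance-two chords oriented backward, so one genuinely needs to manufacture a Hamiltonian cycle with a forward chord (e.g.\ by the deletion-and-insertion argument) rather than take an arbitrary one. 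The only dependence worth making explicit if this were written up formally is the removable-vertex lemma, which as you note follows from Moon's theorem (a cycle of length $n-1$ misses exactly one vertex, and deleting that vertex leaves a Hamiltonian, hence strong, tournament); with that citation in place the proof is complete.
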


\begin{thm}\label{Dorbecetaltourthm2}\textup{\cite{PIE2014}}
For every $n$ and $k$ such that $3\le k\le n-1$, there exists a tournament $T_{n,k}$ on $n$ vertices such that $\overset{\rightarrow}{rc}(T_{n,k})=k$.
\end{thm}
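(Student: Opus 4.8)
The plan is to realise each prescribed value $k$ as the diameter of a suitable strongly connected tournament and then to show that $k$ colours already suffice, so that the two inequalities $\diam(T_{n,k})\le\overset{\rightarrow}{rc}(T_{n,k})\le k$ pinch to equality. Since $\overset{\rightarrow}{rc}(D)\ge\diam(D)$ always holds, the lower bound $\overset{\rightarrow}{rc}(T_{n,k})\ge k$ will be immediate once the diameter is pinned down; the substantive work is the matching upper bound. Note that $k$ lies in the admissible range of Theorem~\ref{Dorbecetaltourthm1}.

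For the construction, I would first build a \emph{backbone} tournament $B$ on $k+1$ vertices $u_0,u_1,\dots,u_k$, with forward arcs $u_iu_{i+1}$ for $0\le i\le k-1$ and all remaining arcs pointing backwards, i.e.\ $u_ju_i\in A(B)$ whenever $j\ge i+2$. In $B$ every ``long'' arc is backward, so no forward shortcut exists and the only $u_0$--$u_k$ path of minimum length forces $d(u_0,u_k)=k$, while every other ordered pair is joined by a path of length at most $k$; thus $\diam(B)=k$, and $B$ is strongly connected (using $k\ge 3$). To reach $n$ vertices I would take $T_{n,k}=B_{u_1\to H}$, the expansion of the single vertex $u_1$ to a transitive tournament $H$ on $n-k\ge 1$ vertices, in the sense defined in Section~\ref{gensect}. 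I would then check that expanding $u_1$ (rather than an endpoint) preserves both strong connectivity and the diameter: every vertex of $H$ inherits the in- and out-neighbourhoods of $u_1$, so any route leaving $H$ towards the backbone must use an arc $H\to u_2$, keeping $d(u_0,u_k)=k$, while the detour $h\to u_2\to u_3\to h'$ keeps all distances \emph{inside} $H$ at most $3\le k$. A short case check then confirms $\diam(T_{n,k})=k$.

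For the upper bound I would assign to every vertex its \emph{layer} (its position $0,1,\dots,k$ along the backbone, with all of $H$ at layer $1$) and colour each arc $xy$ by $\mathrm{layer}(y)\bmod k$, using the $k$ colours $0,1,\dots,k-1$. A path is then rainbow exactly when its heads carry pairwise distinct layers modulo $k$; since the layers lie in $\{0,1,\dots,k\}$, the only possible clashes are a repeated layer (which can only occur inside $H$, all of layer $1$) or the antipodal pair of layers $0$ and $k$ (the vertices $u_0$ and $u_k$). For each ordered pair I would exhibit a rainbow path avoiding these clashes: the forward path $u_0\to h\to u_2\to\cdots\to u_k$ already has heads of layers $1,2,\dots,k$ and is rainbow, and every other pair should admit a short path (of length at most $k$) through strictly increasing layers, or through a single well-chosen backward arc, entering $H$ at most once and never using both $u_0$ and $u_k$ as internal heads.

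The main obstacle I anticipate is precisely this last verification: confirming, over all ordered pairs $(x,y)$, that one can always route a rainbow path under the layer colouring while dodging the two structural clashes above. This splits naturally into cases according to the layers of $x$ and $y$ and whether the chosen route stays forward or must use a backward arc, and it is here that the decision to expand $u_1$ (so that $H$ is entered only from layer $0$ or from layers $\ge 3$, and is exited only to layer $2$) does the real work in keeping every detour both short and colour-distinct. The extremal value $k=n-1$ needs no expansion and simply recovers the upper bound of Theorem~\ref{Dorbecetaltourthm1}, while $k=3$ (and in particular $n=4$, where the backbone is exactly $T_4$) is the tightest case of the diameter computation and should be checked by hand.
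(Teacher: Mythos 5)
The first thing to note is that the paper does not prove this theorem at all: it is quoted from \cite{PIE2014} as background, so there is no in-paper proof to measure your argument against. Judged on its own, your proposal is correct, and the case analysis you defer as the ``main obstacle'' does close. The diameter computation works because every arc of $T_{n,k}$ raises the layer by at most one (arcs inside $H$ keep it fixed), so $d(u_0,u_k)=k$, while the backward arcs give every other ordered pair distance at most $k$. Under your colouring, a path is rainbow precisely when its non-initial vertices have pairwise distinct layers modulo $k$ (note these ``heads'' include the terminal vertex, not just internal ones, but your routes avoid the clashes in all positions anyway), and rainbow paths exist in every case: forward backbone pairs take the increasing path (from $u_0$, the path $u_0\to h\to u_2\to\cdots$ has colours $1,2,\dots$); backward backbone pairs at index gap at least $2$ are single arcs; the consecutive backward pair $(u_{j+1},u_j)$ uses $u_{j+1}\to u_{j-1}\to u_j$, with $h\in H$ playing the role of $u_{j-1}$ when $j=2$; the pair $(u_2,h)$ uses $u_2\to u_0\to h$ (colours $0,1$); pairs leaving $H$ go $h\to u_2\to\cdots$; and for $x,y\in H$ with $yx\in A(H)$, the external route $x\to u_2\to u_0\to y$ has colours $2,0,1$, distinct because $k\ge 3$. (This last case also shows transitivity of $H$ is never used; any tournament would serve.) Combined with $\diam(T_{n,k})\le\overset{\rightarrow}{rc}(T_{n,k})$, this pins $\overset{\rightarrow}{rc}(T_{n,k})=k$.

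It is worth pointing out that your construction coincides, in essence, with the one this paper does carry out for the rainbow vertex-connection analogue, Theorem \ref{tournamentthm2}: the same backbone (a directed Hamiltonian path plus all backward arcs, the paper's $T_{k+2,k}$) and the same vertex-expansion trick, applied there at $v_0$ with an arbitrary tournament. What your argument contributes beyond that template is the arc-level ``layer modulo $k$'' colouring together with the bookkeeping of the two clash types it creates (two heads in $H$, or heads at both $u_0$ and $u_k$), which has no counterpart in the vertex-colouring proof and is exactly the extra work the arc version demands.
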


They also remarked that there does not exist a tournament on $4$ or $5$ vertices with rainbow connection number $2$, and that such a tournament exists if the order is $8$ (mod $12)$. In response, Alva-Samos and Montellano-Ballesteros \cite{JJJ2015} proved the following result.

\begin{thm}\label{ASMBtourthm}\textup{\cite{JJJ2015}}
For every $n\ge 6$, there exists a tournament $T_n$ on $n$ vertices such that $\overset{\rightarrow}{rc}(T_n)=2$.
\end{thm}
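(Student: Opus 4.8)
The lower bound is immediate: a tournament is never a bioriented complete graph $\lrK_n$, so by Theorem~\ref{pro2}(a) every strongly connected tournament $T$ satisfies $\textup{diam}(T)\ge 2$ and hence $\overset{\rightarrow}{rc}(T)\ge 2$. Thus the entire content is to \emph{construct}, for each $n\ge 6$, a strongly connected tournament $T_n$ together with a $2$-colouring of its arcs realising $\overset{\rightarrow}{rc}(T_n)\le 2$. The key reformulation is that with only two colours a rainbow path has length at most $2$; hence a $2$-arc-colouring is rainbow connected if and only if for every ordered pair $(u,v)$ with $uv\notin A(T_n)$ (equivalently $vu\in A(T_n)$) there is a vertex $w$ with $uw,wv\in A(T_n)$ of distinct colours. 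So I must guarantee a bichromatic directed $2$-path between every ``reversed'' pair.

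The plan is an explicit base case plus an induction that lifts $n$ to $n+1$ by duplicating a vertex. First I would exhibit one tournament $T_6$ on $6$ vertices with a concrete $2$-colouring and check directly (only finitely many reversed pairs occur) that each reversed pair has a bichromatic $2$-path; this settles $n=6$. Since duplicating a vertex of a strongly connected tournament keeps it strongly connected and raises the order by exactly one, an $n\to n+1$ lifting reduces everything to this single base case.

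For the lifting step, suppose $T_n$ carries a good $2$-colouring. I would pick a vertex $u$ and add a new vertex $x$ that \emph{copies} $u$: for each $w\neq u$ orient the $x$--$w$ arc exactly as the $u$--$w$ arc and give it the \emph{same} colour, and orient the $u$--$x$ arc (say $x\to u$). Copying colours is what makes the argument cheap: every old pair keeps its old rainbow path verbatim, and every new pair $(x,v)$ with $v\in N^-(u)$, or $(v,x)$ with $v\in N^+(u)$, inherits the bichromatic $2$-path that $u$ used for $(u,v)$ or $(v,u)$. Hence the only genuinely new requirement is a bichromatic $2$-path in \emph{both} directions between the twins $u$ and $x$. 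After pure copying $N^+(x)=N^+(u)$ and $N^-(x)=N^-(u)$, so no such path exists (a common mediator would have to be both an out- and an in-neighbour of $u$); I would therefore break the parallelism by reversing a single arc, choosing an out-neighbour $w_0\in N^+(u)$ and instead orienting $w_0\to x$. Then $u\to w_0\to x$ and $x\to u\to w_0$ supply the two missing $2$-paths, made rainbow by colouring the free arcs $w_0x$ and $xu$ appropriately.

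The main obstacle is precisely this local surgery. Reversing the arc at $w_0$ deletes $w_0$ from $N^+(x)$, so any inherited path $x\to w_0\to v$ that used $w_0$ as mediator is destroyed, and I must certify that an alternative bichromatic $2$-path survives for every such $v$ (the newly reversed pair $(x,w_0)$ itself being covered by $x\to u\to w_0$). I would handle this by choosing $u$ and $w_0$ with slack: either take $w_0$ to be an out-neighbour of $u$ that is the mediator of no reversed pair of $u$, or strengthen the inductive hypothesis so that the colouring always retains a vertex possessing a ``redundant'' out-neighbour, guaranteeing a replacement mediator. Establishing that such a choice exists in $T_6$ and is propagated by the lifting is the delicate point; once it is in place, the induction produces, for every $n\ge 6$, a strongly connected tournament $T_n$ with $\overset{\rightarrow}{rc}(T_n)=2$.
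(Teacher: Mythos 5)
The paper does not actually prove this statement --- it is quoted from \cite{JJJ2015} --- so your proposal must stand on its own, and as written it does not. Your frame is fine: the lower bound via Theorem \ref{pro2}(a), and the reformulation that a $2$-arc-colouring is rainbow connected if and only if every ``reversed'' pair $(u,v)$ (i.e.\ $vu\in A(T)$) has a bichromatic directed $2$-path, are both correct. But the two load-bearing components are missing. First, the base case $T_6$ is only asserted: no tournament on $6$ vertices and no $2$-colouring is exhibited, and this is not a routine omission --- Dorbec et al.\ note that no such tournament exists on $4$ or $5$ vertices, so the existence at $n=6$ is exactly the nontrivial seed the whole induction needs.

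Second, and more seriously, the lifting step has a hole that you name but do not close, and neither of your proposed repairs is self-evidently workable. After the surgery, for $v\in N^{-}(u)$ the rainbow mediators of the new pair $(x,v)$ are \emph{exactly} the rainbow mediators of $(u,v)$ with $w_0$ removed: no new mediator can appear, since the only new out-neighbour of $x$ is $u$, and $u$ can never mediate a path to a vertex $v$ with $v\to u$. Consequently any invariant of the form ``every reversed pair has at least $k$ rainbow mediators'' strictly degrades under duplication and cannot propagate. The alternative fix --- choose $w_0\in N^{+}(u)$ that mediates no reversed pair at $u$ --- may simply fail to exist for every choice of $u$, and even when it exists you have not shown it persists: in $T_{n+1}$ the vertex $u$ is forced to be a mediator of the new reversed pair $(x,w_0)$, so the supply of ``free'' out-neighbours changes at every step and must be re-established, not assumed. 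Until you either exhibit $T_6$ together with a verified colouring and a propagating invariant, or replace the induction by a direct construction for each $n$ (which is what \cite{JJJ2015} does), the argument proves nothing beyond the lower bound $\overset{\rightarrow}{rc}(T)\ge 2$.
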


In Theorems \ref{tournamentthm1} and \ref{tournamentthm2} below, we have the (strong) rainbow vertex-connection versions of the above results.

\begin{thm}\label{tournamentthm1}
If $T$ is a strongly connected tournament on $n\ge 3$ vertices, then $1\le \overset{\rightarrow}{rvc}(T)\le \overset{\rightarrow}{srvc}(T)\le n-2$.
\end{thm}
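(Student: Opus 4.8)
The two left-hand inequalities are immediate. Since $T$ is a tournament, all of its arcs are asymmetric, so $T\neq\lrK_m$ for every $m$; hence by Theorem \ref{pro2}(a) we have $\diam(T)\neq 1$, i.e. $\diam(T)\ge 2$, and then (\ref{pro1eq}) gives $\overset{\rightarrow}{rvc}(T)\ge\diam(T)-1\ge 1$ as well as $\overset{\rightarrow}{rvc}(T)\le\overset{\rightarrow}{srvc}(T)$. The whole content of the theorem is therefore the upper bound $\overset{\rightarrow}{srvc}(T)\le n-2$, and this is what I would devote the proof to. For $n=3$ the only strong tournament is $\rC_3$, for which $\overset{\rightarrow}{srvc}(\rC_3)=1=n-2$, so I will assume $n\ge 4$.

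The plan is to exhibit a vertex-colouring with $n-2$ colours that is strongly rainbow vertex-connected, and first I record two reductions. A geodesic of length at most $2$ has at most one internal vertex and is automatically rainbow, so only ordered pairs at distance at least $3$ impose any condition. Moreover, a vertex $w$ is an internal vertex of \emph{some} geodesic of length at least $3$ if and only if it is a middle vertex (the second or third vertex) of a geodesic of length exactly $3$: since every sub-path of a geodesic is again a geodesic, if $w=x_m$ is internal to a geodesic $x_0x_1\cdots x_\ell$ with $\ell\ge 3$, then one of the length-$3$ sub-geodesics $x_{m-1}x_mx_{m+1}x_{m+2}$ or $x_{m-2}x_{m-1}x_mx_{m+1}$ has $w$ as a middle vertex. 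Call a vertex \emph{good} if it is a middle vertex of no length-$3$ geodesic; equivalently, a good vertex is never an internal vertex of any geodesic of length at least $3$.

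Suppose I can find two good vertices $a,b$. I pick any two further distinct vertices $p,q\notin\{a,b\}$ (possible since $n\ge 4$) and colour $V(T)$ with the $n-2$ colour classes $\{a,p\}$, $\{b,q\}$ and the $n-4$ singletons formed by the remaining vertices. The only colour classes of size greater than $1$ are $\{a,p\}$ and $\{b,q\}$, so a geodesic fails to be rainbow only if it has both of $a,p$, or both of $b,q$, among its internal vertices; in particular it would then have length at least $3$ and contain $a$ or $b$ internally, contradicting goodness. Hence \emph{every} geodesic of length at least $3$ is already rainbow, and together with the first reduction this shows the colouring is strongly rainbow vertex-connected. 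Therefore $\overset{\rightarrow}{srvc}(T)\le n-2$, and it remains only to establish the existence of two good vertices.

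The key lemma, that every strongly connected tournament on $n\ge 4$ vertices contains at least two good vertices, is the step I expect to be the main obstacle, and it is sharp: the ``staircase'' tournament (the transitive tournament $x_0,\dots,x_{n-1}$ with every arc reversed except the consecutive ones $x_ix_{i+1}$) is strong with $\diam=n-1$, its unique longest geodesic is $x_0x_1\cdots x_{n-1}$, and exactly the two endpoints $x_0,x_{n-1}$ are good. To locate good vertices in general I would exploit the no-shortcut property of geodesics (namely $x_j\to x_i$ for all $j\ge i+2$ along a geodesic) together with strong connectivity and Moon's theorem that strong tournaments are vertex-pancyclic. Concretely, I would start either from a diametral pair or from the endpoints of a median order of $T$ (a Hamiltonian path maximising the number of forward arcs), and argue that such an extremal vertex cannot serve as the second or third vertex of a distance-$3$ geodesic, since doing so would force a pair at distance exceeding $\diam(T)$ or would violate the interval-domination property of the median order. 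The delicate point is to rule out \emph{both} middle positions at once, and to guarantee two such vertices rather than just one; this is where the bulk of the work lies.
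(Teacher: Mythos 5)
Your reduction is internally sound: \emph{if} two good vertices $a,b$ exist, then the colouring with classes $\{a,p\}$, $\{b,q\}$ and singletons is strongly rainbow vertex-connected with $n-2$ colours. The fatal problem is that the key lemma you defer to the end --- that every strongly connected tournament on $n\ge 4$ vertices has at least two good vertices --- is false, so no amount of work with median orders or Moon's theorem can complete the argument. Consider the tournament $T$ on $\{x_0,x_1,x_2,x_3,x_4\}$ with arcs $x_0x_1,x_1x_2,x_2x_3,x_3x_4,x_0x_4$, and all remaining arcs pointing backward, i.e.\ $x_2x_0,x_3x_0,x_3x_1,x_4x_1,x_4x_2$ (your staircase on five vertices with the single arc $x_4x_0$ reversed). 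It is strongly connected, with out-neighbourhoods $N^+(x_0)=\{x_1,x_4\}$, $N^+(x_1)=\{x_2\}$, $N^+(x_2)=\{x_3,x_0\}$, $N^+(x_3)=\{x_4,x_0,x_1\}$, $N^+(x_4)=\{x_1,x_2\}$. A direct check gives $\diam(T)=3$, the only ordered pairs at distance $3$ are $(x_0,x_3)$ and $(x_1,x_4)$, and the length-$3$ geodesics are exactly $x_0x_1x_2x_3$, $x_0x_4x_2x_3$, $x_1x_2x_3x_4$ and $x_1x_2x_0x_4$. Their sets of internal vertices are $\{x_1,x_2\}$, $\{x_4,x_2\}$, $\{x_2,x_3\}$, $\{x_2,x_0\}$, whose union is all of $V(T)$: every vertex is internal to some length-$3$ geodesic, so $T$ has no good vertices whatsoever. (Your staircase computation only shows the lemma would be tight if it were true; it is not evidence of truth.)

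The paper's proof avoids exactly this trap by never demanding globally non-internal vertices. It takes a diametral pair $u,v$ with $d(u,v)=d\ge 3$, a $u-v$ geodesic $P$, and lets $v'$, $u'$ be the second and second-to-last vertices of $P$; the doubleton colour classes are $\{u,u'\}$ and $\{v,v'\}$, and the leverage comes from $d(u,u')=d(v',v)=d-1$. When $d\ge 4$, no geodesic can contain both vertices of one class internally: one ordering would force $d(u,u')\le d-2$ by counting, the other would create a shortcut $uwu'$ of length $2$. When $d=3$, the colouring may well leave some geodesics non-rainbow, but the paper then reroutes (Subcases 2.1, 2.2) to exhibit a \emph{different} rainbow geodesic for the offending pair --- which is all that $\overset{\rightarrow}{srvc}$ requires. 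That flexibility is precisely what your scheme forfeits by insisting that every geodesic of length $\ge 3$ be rainbow. Indeed, in the counterexample above your scheme has no valid instantiation, while the paper's classes $\{x_0,x_2\}$, $\{x_3,x_1\}$, $\{x_4\}$ succeed: the one non-rainbow geodesic $x_1x_2x_0x_4$ is backed up by the rainbow geodesic $x_1x_2x_3x_4$.
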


\begin{proof}
The theorem holds for $n=3,4$. Now, let $n\ge 5$. Clearly, we have $\overset{\rightarrow}{srvc}(T)\ge\overset{\rightarrow}{rvc}(T)\ge 1$ by (\ref{pro1eq}), so it remains to prove that $\overset{\rightarrow}{srvc}(T)\le n-2$. Let $d=\textup{diam}(T)$. If $d=2$, then $\overset{\rightarrow}{srvc}(T)=1$ by Theorem \ref{pro2}(b), and we are done. Now, let $d\ge 3$. Let $u,v\in V(T)$ be such that $d(u,v)=d$, and let $P$ be a $u-v$ geodesic. Let $u',v'\in V(P)$ be the vertices where $uv', u'v\in A(P)$, and note that $u,u',v,v'$ are distinct. Then, it is easy to see that $d(u,u')=d(v',v)=d-1$. We let $c$ be the vertex-colouring of $T$ such that $c(u)=c(u')=1$, $c(v)=c(v')=2$, and the remaining vertices are given the distinct colours $3,4,\dots,n-2$. We claim that $c$ is a strongly rainbow vertex-connected colouring for $T$. We consider two cases.\\[1ex]
\emph{Case 1.} $d\ge 4$.\\[1ex]
\indent Let $x,y\in V(T)$, and $Q$ be an $x-y$ geodesic. Suppose that $Q$ is not rainbow, so that $|V(Q)|\ge 4$. Then, either $u,u'\in V(Q-\{x,y\})$, or $v,v'\in V(Q-\{x,y\})$. Assume the former; if the latter holds, then a similar argument applies, with $v',v$ in place of $u,u'$. If the vertices $x,u,u',y$ occur in this order along $Q$, then we have $d(u,u')\le d(x,y)-2\le d-2$, which contradicts $d(u,u')=d-1$. Now, let the vertices $x,u',u,y$ occur in this order along $Q$. Since $Q$ is an $x-y$ geodesic, we have $uw\in A(T)$, where $w$ is the vertex with $wu'\in A(Q)$ (note that we may possibly have $w=x$). Then $uwu'$ is a $u-u'$ path of length $2$ in $T$, which is a contradiction to $d(u,u')=d-1\ge 3$. Hence, $Q$ is a rainbow $x-y$ geodesic.\\[1ex]
\emph{Case 2.} $d=3$.\\[1ex]
\indent In this case, we have $P=uv'u'v$, and $u'u, vv',vu\in A(T)$. Let $x,y\in V(T)$, and $Q$ be an $x-y$ geodesic. Suppose that $Q$ is not rainbow, so that $d(x,y)=3$. We claim that there exists another $x-y$ geodesic in $T$ which is rainbow.\\[1ex]
\emph{Subcase 2.1.} $u,u'\in V(Q-\{x,y\})$.\\[1ex]
\indent Since $u'u\in A(T)$, we have $Q=xu'uy$. We have $xu',u'v\in A(T)$, so that $x\neq v$, and $vu,uy\in A(T)$, so that $y\neq v$. Note that we may possibly have $x=v'$ or $y=v'$. Also, we have $vy\in A(T)$, otherwise if $yv\in A(T)$, then $uyv$ is a $u-v$ path of length $2$, a contradiction. Hence, we may take $xu'vy$.\\[1ex]
\emph{Subcase 2.2.} $v,v'\in V(Q-\{x,y\})$.\\[1ex]
\indent Since $vv'\in A(T)$, we have $Q=xvv'y$. We have $xv,vu\in A(T)$, so that $x\neq u$, and $uv',v'y\in A(T)$, so that $y\neq u$. Note that we may possibly have $x=u'$ or $y=u'$. Also, we have $xu\in A(T)$, otherwise if $ux\in A(T)$, then $uxv$ is a $u-v$ path of length $2$, a contradiction. Hence, we may take $xuv'y$.\\[1ex]
\indent In both Subcases 2.1 and 2.2, we have found a rainbow $x-y$ geodesic of length $3$. We conclude that in all cases, $c$ is a strongly rainbow vertex-connected colouring for $T$. Therefore $\overset{\rightarrow}{srvc}(T)\le n-2$, as required.
\end{proof}

\begin{thm}\label{tournamentthm2}
For $n\ge 5$ and $1\leq k \leq n-2$, there exists a tournament $T_{n,k}$ on $n$ vertices  such that 
\begin{equation}\label{tournamenteq1}
\overset{\rightarrow}{rvc}(T_{n,k})=\overset{\rightarrow}{srvc}(T_{n,k})=k.
\end{equation}
\end{thm}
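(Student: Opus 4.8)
The plan is to realise every value $k$ in the feasible range $1\le k\le n-2$ (the range itself being forced by Theorem \ref{tournamentthm1}) by an explicit tournament of diameter exactly $k+1$. Then the lower bound $\overset{\rightarrow}{rvc}(T_{n,k})\ge k$ is immediate from Proposition \ref{pro1}, and it remains only to exhibit a strongly rainbow vertex-connected colouring with exactly $k$ colours, giving $\overset{\rightarrow}{srvc}(T_{n,k})\le k$; the two together squeeze $k\le\overset{\rightarrow}{rvc}(T_{n,k})\le\overset{\rightarrow}{srvc}(T_{n,k})\le k$. The case $k=1$ I would dispose of separately: take any strongly connected tournament of diameter $2$ on $n\ge 5$ vertices, so that Theorem \ref{pro2}(b)(i) gives both parameters equal to $1$.

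For $2\le k\le n-2$ I would use a ``long local tournament with a blown-up middle vertex''. First define $L_m$ on $v_0,\dots,v_{m-1}$ by the arcs $v_iv_{i+1}$ (for $0\le i\le m-2$) together with all ``backward'' arcs $v_jv_i$ with $j\ge i+2$. A direct check shows $L_m$ is a strongly connected tournament in which the only index-increasing move from any $v_i$ is the single arc to $v_{i+1}$; hence $v_0v_1\cdots v_{m-1}$ is the unique (and forced) shortest $v_0$–$v_{m-1}$ path, every other distance is at most $2$, and $\diam(L_m)=m-1$. I then set $T_{n,k}=(L_{k+2})_{v_1\to M}$, expanding the out-degree-one vertex $v_1$ to an arbitrary tournament $M$ on $n-k-1\ge 1$ vertices, so $|V(T_{n,k})|=n$ (and $k=n-2$ recovers $L_n$ itself). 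To confirm $\diam(T_{n,k})=k+1$, the key points are that $v_0$'s only out-neighbours are now the vertices of $M$, and each $m\in M$ has $v_2$ as its only index-increasing out-neighbour; thus $v_0\,m\,v_2v_3\cdots v_{k+1}$ is a forced geodesic of length $k+1$, which is maximal. Everything else is close: the backward arcs keep all ``downward'' pairs at distance at most $2$, and any two vertices of $M$ are at distance at most $3$ via the external route $m_1\,v_2\,v_3\,m_2$ (which exists because $k\ge 2$ forces $v_3$ to be present). Hence $\overset{\rightarrow}{rvc}(T_{n,k})\ge k$ by Proposition \ref{pro1}.

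For the upper bound I would colour by position: assign colour $1$ to every vertex of $M$, colour $i$ to $v_i$ for $2\le i\le k$, and an arbitrary colour (say $1$) to the two endpoints $v_0,v_{k+1}$; this uses exactly $k$ colours. The decisive structural fact is that positions $1,\dots,k$ carry the pairwise distinct colours $1,\dots,k$ in order, so every ``forward'' geodesic visits a strictly increasing sequence of positions, passes through at most one vertex of $M$, and is therefore automatically rainbow; in particular the long geodesic $v_0\,m\,v_2\cdots v_{k+1}$ is rainbow. Every remaining geodesic has length at most $3$, hence at most two internal vertices: those with a single internal vertex are trivially rainbow, and the only non-forward geodesics with two internal vertices are the within-$M$ routes, whose internal vertices $v_2,v_3$ receive the distinct colours $2$ and a colour different from $2$. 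This yields a rainbow geodesic for every ordered pair, so $\overset{\rightarrow}{srvc}(T_{n,k})\le k$ and the squeeze is complete.

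The step I expect to be the main obstacle is the full diameter computation combined with the exhaustive (though routine) check that the position-colouring supplies a rainbow geodesic for \emph{every} ordered pair. The tightest instance is $k=2$, where only two colours are available, so one must confirm that each two-internal-vertex geodesic genuinely avoids a colour repetition; this is where the freedom to choose the endpoint colours and the existence of the short external route through $v_2,v_3$ are used.
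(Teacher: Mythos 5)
Your construction for $2\le k\le n-2$ is correct and is essentially the paper's own: your $L_{k+2}$ is exactly the paper's tournament $T_{k+2,k}$ (forward arcs $v_iv_{i+1}$ together with all backward arcs $v_jv_i$, $j\ge i+2$), and where the paper forms $T_{n,k}=(T_{k+2,k})_{v_0\to T}$ by expanding $v_0$, you expand the symmetric out-degree-one vertex $v_1$ instead; the position colouring, the lower bound $\overset{\rightarrow}{rvc}\ge k$ from the forced $v_0$--$v_{k+1}$ geodesic via Proposition \ref{pro1}, and the case-check for pairs inside the blown-up set (direct arc, common out-neighbour inside $M$, or the external length-$3$ route through $v_2,v_3$, matching the paper's route through $v_1,v_2$) are the same argument up to this relabelling, and your verification, including the tight subcase $k=2$ where $v_{k+1}=v_3$ has colour $1\neq 2$, goes through.

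The genuine gap is in your case $k=1$. You write ``take any strongly connected tournament of diameter $2$ on $n\ge 5$ vertices,'' but the existence of such tournaments for every $n\ge 5$ is precisely what must be established there, and you give no argument for it. This is not automatic: for $n=4$ the unique strongly connected tournament has diameter $3$, so existence genuinely depends on $n$. The paper settles it by invoking Theorem \ref{ASMBtourthm} --- for every $n\ge 6$ there is a tournament with $\overset{\rightarrow}{rc}=2$, which has diameter $2$ by Theorem \ref{pro2} --- and, since that theorem does not cover $n=5$ (indeed no $5$-vertex tournament has $\overset{\rightarrow}{rc}=2$), by exhibiting the explicit tournament $T_{5,1}$ that is the union of the two directed cycles $v_0v_1v_2v_3v_4v_0$ and $v_0v_3v_1v_4v_2v_0$, which has diameter $2$. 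Your proof becomes complete once you supply such a justification, e.g.\ citing Theorem \ref{ASMBtourthm} for $n\ge 6$, and for $n=5$ giving that explicit example (or, for odd $n$, the circulant tournament $C_n([(n-1)/2])$, whose diameter is $2$ as noted in Section \ref{specsect}); but as written, the $k=1$ case rests on an unproved existence claim.
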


%To prove Theorem \ref{tournamentthm1}, we first prove the following lemma.
%
%\begin{lem}\label{tournamentlem1}
%Let $D$ be a strongly connected oriented graph, $H$ be an oriented graph, and $u\in V(D)$. Let $D'$ be the graph obtained from $D$ by expanding $u$ into $H$. Suppose that there is a rainbow vertex-connected vertex-colouring $c$ of $D$ with $\overset{\rightarrow}{rvc}(D)$ colours such that, $u$ belongs to some directed cycle $\rC$ in $D$ where the vertices of $V(\rC)\setminus\{u\}$ have distinct colours. Then $\overset{\rightarrow}{rvc}(D')\leq\overset{\rightarrow}{rvc}(D)$.
%\end{lem}
%
%\begin{proof}
%First, note that it is easy to see that $D'$ is indeed also a strongly connected oriented graph. Now, we extend the colouring $c$ to a colouring $c'$ of $D'$ where $c'(v)=c(v)$ for all $v\in V(D')\setminus\{u\}=V(D)\setminus\{u\}$, and $c'(v)=c(u)$ for all $v\in V(H)$ in $D'$. Then, $c'$ also uses $\overset{\rightarrow}{rvc}(D)$ colours, and hence it suffices to show that $c'$ is a rainbow vertex-connected colouring for $D'$. Let $x,y\in V(D')$. If at most one of $x$ and $y$ is in $V(H)$ in $D'$, then a rainbow $x-y$ directed path in $D$ under $c$ clearly corresponds to a rainbow $x-y$ directed path in $D'$ under $c'$. Otherwise, we have $x,y\in V(H)$ in $D'$. Let $\rC=uu_1u_2\cdots u_tu\subset D$ for some $t\ge 2$. Then $xu_1u_2\cdots u_ty$ is a rainbow $x-y$ directed path in $D'$ under $c'$.
%\end{proof}

\begin{proof}
We first consider the case $k=1$. By Theorem \ref{pro2}(b), it suffices to find a tournament $T_{n,1}$ on $n\ge 5$ vertices such that $\overset{\rightarrow}{rvc}(T_{n,1})=1$. By Theorems \ref{pro2}(b) and \ref{ASMBtourthm}, we see that such a tournament $T_{n,1}$ exists for every $n\ge 6$. For $n=5$, we let $T_{5,1}$ be the tournament which is the union of two copies of ${\rC}_5$: $v_0v_1v_2v_3v_4v_0$ and $v_0v_3v_1v_4v_2v_0$. Then we have $\overset{\rightarrow}{rvc}(T_{5,1})=1$.

Now, let $k\ge 2$. We first consider the case $n=k+2$, and construct a tournament $T_{k+2,k}$ such that (\ref{tournamenteq1}) holds. Let $V(T_{k+2,k})=\{v_0,\dots,v_{k+1}\}$, and $A(T_{k+2,k})=\{v_iv_{i+1}:0\le i\le k\}\cup\{v_jv_i:0\le i<j\le k+1$ and $j-i\ge 2\}$. Since $v_0\cdots v_{k+1}$ is the only $v_0-v_{k+1}$ path in $T_{k+2,k}$, we have diam$(T_{k+2,k})\ge k+1$, and thus $\overset{\rightarrow}{rvc}(T_{k+2,k})\ge k$ by (\ref{pro1eq}). Now, consider the vertex-colouring $c$ of $T_{k+2,k}$ where $c(v_i)=i$ for $1\le i\le k$, and $c(v_0)=c(v_{k+1})=1$. Let $v_i,v_j\in V(T_{k+2,k})$. If $j>i$, then $v_iv_{i+1}\cdots v_j$ is the unique $v_i-v_j$ path, and if $j\le i-2$, then $v_iv_j\in A(T_{k+2,k})$. If $j=i-1$, then $v_iv_j\not\in A(T_{k+2,k})$, and a $v_i-v_j$ path is $v_iv_{i-2}v_{i-1}$ if $2\le i\le k+1$, and $v_1v_2v_0$ if $i=1$. Hence, we always have a rainbow $v_i-v_j$ geodesic, so that $\overset{\rightarrow}{srvc}(T_{k+2,k})\le k$. Therefore, (\ref{tournamenteq1}) holds for $T_{k+2,k}$, by (\ref{pro1eq}).

Finally, let $n>k+2$. We construct a tournament $T_{n,k}$ such that (\ref{tournamenteq1}) holds. Let $T$ be a tournament on $n-k-1$ vertices, and let $T_{n,k}=(T_{k+2,k})_{v_0\to T}$, which is obtained from $T_{k+2,k}$ by expanding $v_0$ to $T$. Then, note that we have diam$(T_{n,k})\ge k+1$, which again gives $\overset{\rightarrow}{rvc}(T_{n,k})\ge k$. Now, we extend the colouring $c$ on $T_{k+2,k}$ to a colouring $c'$ on $T_{n,k}$ by letting $c'(v)=c(v)$ if $v\neq v_0$, and $c'(v)=1$ if $v\in V(T)$ in $T_{n,k}$. We claim that $c'$ is a strongly rainbow vertex-connected colouring for $T_{n,k}$. Let $x,y\in V(T_{n,k})$. A similar argument as for $T_{k+2,k}$ shows that, there exists a rainbow $x-y$ geodesic in $T_{n,k}$, if we do not have $x,y\in V(T)$. Now, let $x,y\in V(T)$. Then, we have exactly one of the following three situations.
\begin{itemize}
\item $xy\in A(T)\subset A(T_{n,k})$.
\item $xy\not\in A(T)$, and there exists $z\in V(T)$ such that $xzy$ is a path in $T\subset T_{n,k}$.
\item $xy\not\in A(T)$, and there does not exist $z\in V(T)$ such that $xzy$ is a path in $T\subset T_{n,k}$. Then, $xv_1v_2y$ is a rainbow $x-y$ geodesic in $T_{n,k}$, since $k\ge 2$.
\end{itemize}
In each case, we have a rainbow $x-y$ geodesic in $T_{n,k}$, and the claim holds. Therefore, $\overset{\rightarrow}{srvc}(T_{n,k})\le k$. Again (\ref{tournamenteq1}) holds, by (\ref{pro1eq}).
\end{proof}

In \cite{PIE2014}, Dorbec et al.~also proved the following result, which shows that the rainbow connection number of a tournament is in fact closely related to its diameter.

\begin{thm}\label{Dorbecetaltourthm3}\textup{\cite{PIE2014}}
Let $T$ be a tournament of diameter $d$. We have $d\le\overset{\rightarrow}{rc}(T)\le d+2$.
\end{thm}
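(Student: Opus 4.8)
The plan is to dispatch the lower bound at once and to concentrate all effort on the upper bound $\overset{\rightarrow}{rc}(T)\le d+2$. The inequality $d\le\overset{\rightarrow}{rc}(T)$ is immediate, since $\diam(T)=d\le\overset{\rightarrow}{rc}(T)$ is already recorded for digraphs. For the upper bound I would fix an ordered pair $u,v$ with $d(u,v)=d$, take a geodesic $P=z_0z_1\cdots z_d$ from $u=z_0$ to $v=z_d$, and colour its arc $z_{i-1}z_i$ with colour $i$, for $1\le i\le d$. Two further colours, $d+1$ and $d+2$, are reserved for the arcs off $P$. The purpose of giving $P$ all distinct colours is that every subpath $z_iz_{i+1}\cdots z_j$ with $i\le j$ is automatically rainbow, so that $P$ acts as a rainbow ``highway'' onto which a path for an arbitrary pair can be spliced.

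The key structural step I would establish first is an attachment lemma exploiting that $P$ is a geodesic, hence has no forward chord: $z_az_b\in A(T)$ is impossible for $b\ge a+2$, since a subpath of a geodesic is a geodesic and $d(z_a,z_b)=b-a$. Consequently, for a vertex $w\notin P$ one cannot simultaneously have $z_a\to w$ and $w\to z_b$ with $b\ge a+3$, as this would give a length-$2$ shortcut contradicting $d(z_a,z_b)=b-a$. This forces the set of path-vertices beaten by $w$ to be an initial segment of $P$ and the set of path-vertices beating $w$ to be a final segment, apart from a bounded window of boundary indices. Thus each $w$ carries a well-defined attachment position $t(w)$ along $P$.

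Using this, I would colour each arc incident to a vertex off $P$ by one of $d+1,d+2$, according to whether it is an \emph{entering} arc (used to climb onto the highway near the attachment position) or a \emph{leaving} arc (from $P$ to its target). A rainbow $x$--$y$ path would then be produced in the form $x\to z_i\to z_{i+1}\to\cdots\to z_j\to y$: one arc of colour $d+1$ onto $P$ at an index $i$, the highway segment using the distinct colours $i+1,\dots,j$, and one arc of colour $d+2$ off $P$ to $y$, with the attachment positions arranged to guarantee $i\le j$. Pairs for which $x$ or $y$ already lies on $P$, or for which $x\to y$ is a single arc, are settled directly.

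The hard part will be the two extremes of the attachment order: vertices dominated by \emph{every} vertex of $P$ (which then have no out-arc onto $P$) and vertices dominating every vertex of $P$ (no in-arc from $P$), for which the clean ``one step on, highway, one step off'' scheme fails. This is precisely where the hypothesis that $d$ is the \textbf{diameter} must be used: I expect to show such a vertex is within one further step, absorbable by the same two spare colours, of a non-extreme vertex, or else to reroute through $u$ or $v$, and then to verify through a case analysis over the relative attachment positions of $x$ and $y$ that the colours $d+1$ and $d+2$ never collide on the spliced path. Confirming that two extra colours genuinely suffice in all cases, rather than a third being needed to separate the entering and leaving steps, is the crux of the argument.
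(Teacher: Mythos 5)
Your lower bound is fine, and your structural lemma about a diameter geodesic $P=z_0z_1\cdots z_d$ (no forward chords, and no configuration $z_a\to w\to z_b$ with $b\ge a+3$) is correct. The fatal problem is the colouring scheme itself: giving the $d$ highway arcs distinct colours and reserving only the two colours $d+1,d+2$ for \emph{every} other arc cannot be completed, and the failure is exactly the ``crux'' you deferred. Suppose $x$ is beaten by every vertex of $P$ and $y$ beats every vertex of $P$. Then the first arc of any $x$--$y$ path ends at an off-path vertex, so the second arc is also incident to an off-path vertex, and the last arc enters $y$ from an off-path vertex; hence any $x$--$y$ path of length at least $3$ contains at least three distinct arcs coloured from the two-colour palette $\{d+1,d+2\}$ and cannot be rainbow, \emph{no matter how} the two spare colours are distributed, and rerouting through $u$ or $v$ only inserts further off-path arcs. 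Such pairs at distance at least $3$ genuinely occur. Take $V(T)=\{z_0,z_1,z_2,z_3,x,y,w,v'\}$ with arcs $z_0z_1,z_1z_2,z_2z_3$, all chords of the path oriented backwards, $z_ix$, $yz_i$, $wz_i$, $z_iv'$ for all $i$, together with $yx$, $xw$, $yw$, $wv'$, $v'x$, $v'y$. One checks this is a tournament of diameter $3$, that $z_0z_1z_2z_3$ is a geodesic realizing the diameter, that $z_i\to x$ and $y\to z_i$ for all $i$, and that $N^+(x)=\{w\}$, $N^-(y)=\{v'\}$, so $d(x,y)=3$ and every $x$--$y$ path (e.g.\ $xwv'y$) uses at least three arcs incident to off-path vertices. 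So with this legitimate choice of highway your colouring cannot rainbow-connect $(x,y)$; any repair must reuse the colours $1,\dots,d$ off the highway or abandon the single-geodesic structure, which is a different proof.

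This is also where your route departs from the source: the paper does not prove this theorem (it quotes it from \cite{PIE2014}), but the same method is reproduced in the paper's proof of the analogous Theorem \ref{tournamentthm3}. There one takes a vertex $a$ of eccentricity $d$ and the distance layers $V_1,\dots,V_d$, exploits the tournament facts that $uv\in A(T)$ whenever $u\in V_i$, $v\in V_j$ with $i-j\ge 2$ and that every vertex in $V_i$ with $i\ge 2$ has an arc to $a$, and colours \emph{by layer}, spending only a constant number of extra colours on $a$, on extremal vertices $p\in V_1$ and $q\in V_d$, and along one $p$--$q$ geodesic. Because each colour is reused across an entire layer, every vertex has its own short way on and off the skeleton; that reuse is precisely what a globally reserved two-colour palette cannot provide, and it is the idea your proposal is missing.
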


Here, we shall use a similar method and prove an analogous result for the rainbow vertex-connection number, as follows.

\begin{thm}\label{tournamentthm3}
Let $T$ be a tournament of diameter $d$. We have $d-1\le\overset{\rightarrow}{rvc}(T)\le d+3$.
\end{thm}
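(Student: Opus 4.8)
The lower bound $d-1 \le \overset{\rightarrow}{rvc}(T)$ is immediate from Proposition \ref{pro1} (equation (\ref{pro1eq})), so the entire task is to establish the upper bound $\overset{\rightarrow}{rvc}(T) \le d+3$. The plan is to mimic the colouring strategy used in the proof of Theorem \ref{tournamentthm1}, but now carried out along a full diametral geodesic rather than just at its two ends. The idea is to take vertices $u,v \in V(T)$ with $d(u,v)=d$ and a $u$--$v$ geodesic $P = u=w_0 w_1 \cdots w_d = v$, and use the internal structure of $P$ together with the well-known ``king'' property of tournaments — every vertex reaches every other within distance $2$, after suitable orientation arguments — to build a rainbow vertex-connected colouring using roughly $d$ colours on the geodesic plus a bounded number of extra colours.

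**The colouring and the main argument.**

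First I would colour the vertices of the geodesic $P$ so that each internal vertex $w_1,\dots,w_{d-1}$ receives a distinct colour, which already forces at least $d-1$ colours (matching the diameter lower bound). The remaining vertices of $T$ lie off $P$, and the key structural fact to exploit is that for any pair $x,y \in V(T)$, one can find a short $x$--$y$ path that either is itself a geodesic or can be rerouted through a small number of designated ``hub'' vertices. The central claim to prove is: for any $x,y \in V(T)$ there is a rainbow $x$--$y$ path whose internal vertices avoid colour repetitions, and this path uses at most $d+O(1)$ internal vertices. I expect to colour the off-$P$ vertices by grouping them and assigning them a small palette of extra colours (the ``$+3$'' budget), where the grouping is dictated by how those vertices attach to initial and terminal segments of $P$. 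Following the style of Theorem \ref{Dorbecetaltourthm3}, the argument should split into cases according to where $x$ and $y$ sit relative to $P$: both on $P$, one on and one off, or both off. In each case I would produce an explicit rainbow path, using the tournament property to find a dominating vertex or a length-$2$ detour when a direct short path fails.

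**The expected obstacle.**

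The hard part will be controlling colour collisions among the off-geodesic vertices when constructing $x$--$y$ paths for pairs that both lie off $P$. A naive colouring risks forcing two internal vertices of the same colour onto every short $x$--$y$ path, and the whole point of the ``$+3$'' slack (rather than, say, ``$+2$'' as in the edge version) is presumably to absorb exactly these collisions. I anticipate that the delicate step is showing that three extra colours genuinely suffice: one must verify that whenever a constructed path threatens to repeat a colour, the tournament structure permits a detour through a differently coloured hub. Concretely, I would identify a small set of auxiliary vertices — likely near $u$ and $v$, analogous to the vertices $u',v'$ in the proof of Theorem \ref{tournamentthm1} — assign them the extra colours, and then argue by a case analysis on the position of a hypothetical repeated colour along an $x$--$y$ geodesic that a rainbow rerouting always exists. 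Making this case analysis exhaustive while keeping the extra-colour count at exactly three, rather than letting it creep upward, is where I expect the real work to lie.
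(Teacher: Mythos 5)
Your lower bound is fine, but the upper-bound part of your proposal is a plan rather than a proof, and the plan as stated has a genuine gap. You anchor everything on one diametral $u$--$v$ geodesic $P$, give its internal vertices distinct colours, and propose to colour \emph{all} remaining vertices from a constant palette of extra colours (note the budget is actually four, since $(d+3)-(d-1)=4$, not three). The step you defer (``controlling colour collisions among the off-geodesic vertices'') is not merely delicate; it is exactly where this anchoring breaks down. Consider the set $S$ of vertices off $P$ that lose to every vertex of $P$, i.e.\ all arcs between them and $P$ are directed from $P$. Nothing prevents $S$ from being large, and any path starting at a vertex $x\in S$ must begin with internal vertices lying outside $P$; there is no bound, independent of $d$, on how many off-$P$ internal vertices such a path needs before it can enter $P$. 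Since all of those internal vertices draw on your constant palette, the paths you construct need not be rainbow, and your case ``both $x$ and $y$ off $P$'' has no mechanism to repair this. The king-type fact you invoke (every tournament has a vertex reaching all others in at most two steps) is applied to no particular subtournament in your sketch and does not provide short escapes from $S$ into $P$.

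The paper's proof rests on a structural idea your sketch is missing. Instead of a single geodesic, it takes a vertex $a$ of eccentricity $d$ and partitions $V(T)$ into the BFS levels $V_1,\dots,V_d$ of distances from $a$; the crucial tournament fact is that every arc spanning at least two levels points backward, so in particular every vertex in $V_i$ with $i\ge 2$ has an arc directly to $a$. Hence \emph{every} vertex can route through $a$ and then descend one level at a time, and the colouring can simply assign colour $i$ to all of $V_i$ for $2\le i\le d-1$: the constructed paths visit each level essentially once, so giving a whole level a single colour is harmless. The four extra colours $\alpha,\beta,\gamma,\delta$ are not a palette for ``everything off the geodesic''; they mark a handful of specific hub vertices, namely $p\in V_1$ of maximum in-degree in $T[V_1]$ and $q\in V_d$ of maximum out-degree in $T[V_d]$ (each connected within its own level by a path of length at most two, which is where the king-type argument genuinely enters), the root $a$, and one or two recoloured vertices $r,s$ on a fixed $p$--$q$ geodesic. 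Without the level decomposition and the backward-arc property, the counting ``$(d-1)$ geodesic colours plus a constant'' does not survive the off-geodesic pairs, so the proposal as written cannot be completed along the lines you describe.
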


\begin{proof}
The theorem holds for $d=2$, since then we have $\overset{\rightarrow}{rvc}(T)=1$ by Theorem \ref{pro2}(b). Now, let $d\ge 3$. Clearly, the lower bound follows from (\ref{pro1eq}). We prove the upper bound by constructing a vertex-colouring $c$ of $T$, using colours $1,2,\dots,d-1,\alpha,\beta,\gamma,\delta$. As in \cite{PIE2014}, we consider the following decomposition of $T$. Let $a\in V(T)$ be a vertex with eccentricity $d$, i.e., there exists a vertex of $T$ at distance $d$ from $a$. For $1\le i\le d$, let $V_i$ denote the set of vertices at distance $i$ from $a$. Then, every $V_i$ is non-empty. Note that $ua\in A(T)$ for every $u\in V_i$ with $2\le i\le d$, and $uv\in A(T)$ whenever $u\in V_i$, $v\in V_j$ and $i-j\ge 2$. Let $p\in V_1$ have maximum in-degree in $T[V_1]$, and $q\in V_d$ have maximum out-degree in $T[V_d]$. Now, we define the vertex-colouring $c$ as follows.
\begin{itemize}
\item $c(v)=i$ if $v\in V_i$, for $2\le i\le d-1$.
\item $c(p)=\alpha$, and $c(v)=1$ for $v\in V_1\setminus\{p\}$.
\item $c(a)=c(q)=\beta$, and $c(v)=d-2$ for $v\in V_d\setminus\{q\}$.
\end{itemize}
Next, we update $c$ by considering a $p-q$ geodesic $P$. Note that we have $|A(P)|=d-1$ or $|A(P)|=d$. Thus, we consider these two cases.
\begin{enumerate}
\item[(i)] If $|A(P)|=d-1$, then $P$ must contain one arc from $V_i$ to $V_{i+1}$, for every $1\le i\le d-1$. Let $r$ be the vertex of $P$ in $V_{d-2}$. We recolour $r$ by letting $c(r)=\gamma$.
\item[(ii)] If $|A(P)|=d$, then $P$ must contain one arc from $V_i$ to $V_{i+1}$, for every $1\le i\le d-1$, and one arc from $V_k$ to $V_k$, for some $1\le k\le d$. If $k\neq d-2$, then let $r$ be the vertex of $P$ in $V_{d-2}$, and $st$ be the arc of $P$ in $V_k$. If $k=d-2$, then let $sr$ be the arc of $P$ in $V_{d-2}$. In either case, we recolour $r$ and $s$ by letting $c(r)=\gamma$ and $c(s)=\delta$.
\end{enumerate}
The situation in (ii) is shown in Figure 3.\\[1ex]
\[ \unit = 0.6cm
%vertices
\pt{-9.6}{3}
\pt{-7}{1}\pt{-7}{4}\pt{-7}{5}
\pt{-4.4}{1}\pt{-4.4}{4}\pt{-4.4}{5}
\pt{0}{1}\pt{0}{2}\pt{0}{4}\pt{0}{5}
\pt{4.4}{2}\pt{4.4}{4}\pt{4.4}{5}
\pt{7}{2}\pt{7}{4}\pt{7}{5}
\pt{9.6}{2}\pt{9.6}{4}\pt{9.6}{5}
%vertex labels
\point{-10.35}{2.88}{\small $a$}\ellipse{-10.2}{3}{0.3}{0.3}
\point{-7.13}{0.43}{\small $p$}\ellipse{-7}{0.5}{0.3}{0.3}
\point{-0.35}{0.38}{\small $s$}\ellipse{-0.2}{0.5}{0.3}{0.3}
\point{0.09}{2.32}{\small $t$}\ellipse{0.2}{2.5}{0.3}{0.3}
\point{4.27}{1.36}{\small $r$}\ellipse{4.4}{1.5}{0.3}{0.3}
\point{9.95}{1.92}{\small $q$}\ellipse{10.1}{2}{0.3}{0.3}
%P
\point{1.93}{1.36}{\small $P$}
%colours
\ptlu{-9.31}{3.22}{\beta}\ptlu{-7}{5}{1}\ptld{-7.21}{3.9}{1}\ptlu{-6.75}{1.08}{\alpha}\ptlu{-4.4}{1.05}{2}\ptld{-4.4}{3.9}{2}\ptlu{-4.41}{5}{2}\ptlr{0.06}{1}{\delta}\ptll{-0.1}{2}{k}\ptld{0}{3.9}{k}\ptlu{0}{5}{k}\ptld{4.4}{3.9}{d-2}\ptlu{4}{5}{d-2}\ptlu{4.4}{2.08}{\gamma}\ptlu{7}{2}{d-1}\ptld{7}{3.9}{d-1}\ptlu{7.4}{5}{d-1}\ptlu{10}{5}{d-2}\ptld{9.6}{3.9}{d-2}\ptlu{9.5}{2.04}{\beta}
%Vi
\varline{500}{0.6}
\dl{-7.3}{0}{-6.7}{0}\dl{-7.3}{6}{-6.7}{6}\dl{-8}{0.7}{-8}{5.3}\dl{-6}{0.7}{-6}{5.3}\bez{-8}{0.7}{-8}{0}{-7.3}{0}\bez{-6}{0.7}{-6}{0}{-6.7}{0}\bez{-8}{5.3}{-8}{6}{-7.3}{6}\bez{-6}{5.3}{-6}{6}{-6.7}{6}
\dl{-4.7}{0}{-4.1}{0}\dl{-4.7}{6}{-4.1}{6}\dl{-5.4}{0.7}{-5.4}{5.3}\dl{-3.4}{0.7}{-3.4}{5.3}\bez{-5.4}{0.7}{-5.4}{0}{-4.7}{0}\bez{-3.4}{0.7}{-3.4}{0}{-4.1}{0}\bez{-5.4}{5.3}{-5.4}{6}{-4.7}{6}\bez{-3.4}{5.3}{-3.4}{6}{-4.1}{6}
\dl{-0.3}{0}{0.3}{0}\dl{-0.3}{6}{0.3}{6}\dl{-1}{0.7}{-1}{5.3}\dl{1}{0.7}{1}{5.3}\bez{-1}{0.7}{-1}{0}{-0.3}{0}\bez{1}{0.7}{1}{0}{0.3}{0}\bez{-1}{5.3}{-1}{6}{-0.3}{6}\bez{1}{5.3}{1}{6}{0.3}{6}
\dl{4.7}{0}{4.1}{0}\dl{4.7}{6}{4.1}{6}\dl{5.4}{0.7}{5.4}{5.3}\dl{3.4}{0.7}{3.4}{5.3}\bez{5.4}{0.7}{5.4}{0}{4.7}{0}\bez{3.4}{0.7}{3.4}{0}{4.1}{0}\bez{5.4}{5.3}{5.4}{6}{4.7}{6}\bez{3.4}{5.3}{3.4}{6}{4.1}{6}
\dl{7.3}{0}{6.7}{0}\dl{7.3}{6}{6.7}{6}\dl{8}{0.7}{8}{5.3}\dl{6}{0.7}{6}{5.3}\bez{8}{0.7}{8}{0}{7.3}{0}\bez{6}{0.7}{6}{0}{6.7}{0}\bez{8}{5.3}{8}{6}{7.3}{6}\bez{6}{5.3}{6}{6}{6.7}{6}
\dl{9.9}{0}{9.3}{0}\dl{9.9}{6}{9.3}{6}\dl{10.6}{0.7}{10.6}{5.3}\dl{8.6}{0.7}{8.6}{5.3}\bez{10.6}{0.7}{10.6}{0}{9.9}{0}\bez{8.6}{0.7}{8.6}{0}{9.3}{0}\bez{10.6}{5.3}{10.6}{6}{9.9}{6}\bez{8.6}{5.3}{8.6}{6}{9.3}{6}
\point{-7.3}{6.2}{\small $V_1$}\point{-4.7}{6.2}{\small $V_2$}\point{-0.3}{6.2}{\small $V_k$}\point{3.82}{6.2}{\small $V_{d-2}$}\point{6.42}{6.2}{\small $V_{d-1}$}\point{9.3}{6.2}{\small $V_d$}
\point{-2.53}{3.96}{\small $\dots$}\point{1.86}{3.96}{\small $\dots$}
%arcs
\dl{-5.6}{-1}{6.6}{-1}\bez{-9.6}{3}{-9.6}{-1}{-5.6}{-1}\bez{9.6}{2}{9.6}{-1}{6.6}{-1}
\bez{-9.6}{3}{-9.2}{9.7}{-4.4}{5}
\dl{-0.7}{7}{3.3}{7}\bez{7}{5}{5.3}{7}{3.3}{7}\bez{-4.4}{5}{-2.7}{7}{-0.7}{7}
\dl{-9.6}{3}{-7}{1}\dl{-9.6}{3}{-7}{4}\dl{-9.6}{3}{-7}{5}\dl{-7}{1}{-7}{5}\dl{-7}{5}{-4.4}{5}\dl{-7}{4}{-4.4}{4}\dl{-7}{4}{-4.4}{5}\dl{-4.4}{4}{-4.4}{5}\dl{-7}{4}{-4.4}{1}\dl{-7}{1}{0}{4}\dl{0}{4}{0}{5}\dl{7}{4}{7}{5}\dl{7}{5}{9.6}{5}\dl{7}{5}{9.6}{4}\dl{7}{4}{4.4}{5}\dl{4.4}{5}{7}{5}\dl{4.4}{4}{7}{4}\bez{4.4}{5}{7}{9}{9.6}{5}
\varline{850}{3.8}
\dl{-7}{1}{0}{1}\dl{0}{1}{0}{2}\dl{0}{2}{9.6}{2}
%arrows
\thnline
\dl{-1.63}{1}{-1.89}{1.1}\dl{-1.67}{1}{-1.89}{1.1}\dl{-1.71}{1}{-1.89}{1.1}\dl{-1.75}{1}{-1.89}{1.1}\dl{-1.79}{1}{-1.89}{1.1}\dl{-1.83}{1}{-1.89}{1.1}
\dl{-1.63}{1}{-1.89}{0.9}\dl{-1.67}{1}{-1.89}{0.9}\dl{-1.71}{1}{-1.89}{0.9}\dl{-1.75}{1}{-1.89}{0.9}\dl{-1.79}{1}{-1.89}{0.9}\dl{-1.83}{1}{-1.89}{0.9}
\dl{-1.63}{1}{-1.83}{1}
\dl{-2.51}{1}{-2.77}{1.1}\dl{-2.55}{1}{-2.77}{1.1}\dl{-2.59}{1}{-2.77}{1.1}\dl{-2.63}{1}{-2.77}{1.1}\dl{-2.67}{1}{-2.77}{1.1}\dl{-2.71}{1}{-2.77}{1.1}
\dl{-2.51}{1}{-2.77}{0.9}\dl{-2.55}{1}{-2.77}{0.9}\dl{-2.59}{1}{-2.77}{0.9}\dl{-2.63}{1}{-2.77}{0.9}\dl{-2.67}{1}{-2.77}{0.9}\dl{-2.71}{1}{-2.77}{0.9}
\dl{-2.51}{1}{-2.71}{1}
\dl{-5.57}{1}{-5.83}{1.1}\dl{-5.61}{1}{-5.83}{1.1}\dl{-5.65}{1}{-5.83}{1.1}\dl{-5.69}{1}{-5.83}{1.1}\dl{-5.73}{1}{-5.83}{1.1}\dl{-5.77}{1}{-5.83}{1.1}
\dl{-5.57}{1}{-5.83}{0.9}\dl{-5.61}{1}{-5.83}{0.9}\dl{-5.65}{1}{-5.83}{0.9}\dl{-5.69}{1}{-5.83}{0.9}\dl{-5.73}{1}{-5.83}{0.9}\dl{-5.77}{1}{-5.83}{0.9}
\dl{-5.57}{1}{-5.77}{1}
\dl{1.89}{2}{1.63}{2.1}\dl{1.85}{2}{1.63}{2.1}\dl{1.81}{2}{1.63}{2.1}\dl{1.77}{2}{1.63}{2.1}\dl{1.73}{2}{1.63}{2.1}\dl{1.69}{2}{1.63}{2.1}
\dl{1.89}{2}{1.63}{1.9}\dl{1.85}{2}{1.63}{1.9}\dl{1.81}{2}{1.63}{1.9}\dl{1.77}{2}{1.63}{1.9}\dl{1.73}{2}{1.63}{1.9}\dl{1.69}{2}{1.63}{1.9}
\dl{1.89}{2}{1.69}{2}
\dl{2.77}{2}{2.51}{2.1}\dl{2.73}{2}{2.51}{2.1}\dl{2.69}{2}{2.51}{2.1}\dl{2.65}{2}{2.51}{2.1}\dl{2.61}{2}{2.51}{2.1}\dl{2.57}{2}{2.51}{2.1}
\dl{2.77}{2}{2.51}{1.9}\dl{2.73}{2}{2.51}{1.9}\dl{2.69}{2}{2.51}{1.9}\dl{2.65}{2}{2.51}{1.9}\dl{2.61}{2}{2.51}{1.9}\dl{2.57}{2}{2.51}{1.9}
\dl{2.77}{2}{2.57}{2}
\dl{5.83}{2}{5.57}{2.1}\dl{5.79}{2}{5.57}{2.1}\dl{5.75}{2}{5.57}{2.1}\dl{5.71}{2}{5.57}{2.1}\dl{5.67}{2}{5.57}{2.1}\dl{5.63}{2}{5.57}{2.1}
\dl{5.83}{2}{5.57}{1.9}\dl{5.79}{2}{5.57}{1.9}\dl{5.75}{2}{5.57}{1.9}\dl{5.71}{2}{5.57}{1.9}\dl{5.67}{2}{5.57}{1.9}\dl{5.63}{2}{5.57}{1.9}
\dl{5.83}{2}{5.63}{2}
\dl{8.43}{2}{8.17}{2.1}\dl{8.39}{2}{8.17}{2.1}\dl{8.35}{2}{8.17}{2.1}\dl{8.31}{2}{8.17}{2.1}\dl{8.27}{2}{8.17}{2.1}\dl{8.23}{2}{8.17}{2.1}
\dl{8.43}{2}{8.17}{1.9}\dl{8.39}{2}{8.17}{1.9}\dl{8.35}{2}{8.17}{1.9}\dl{8.31}{2}{8.17}{1.9}\dl{8.27}{2}{8.17}{1.9}\dl{8.23}{2}{8.17}{1.9}
\dl{8.43}{2}{8.23}{2}
\dl{-0.13}{-1}{0.13}{-0.9}\dl{-0.09}{-1}{0.13}{-0.9}\dl{-0.05}{-1}{0.13}{-0.9}\dl{-0.01}{-1}{0.13}{-0.9}\dl{0.03}{-1}{0.13}{-0.9}\dl{0.07}{-1}{0.13}{-0.9}
\dl{-0.13}{-1}{0.13}{-1.1}\dl{-0.09}{-1}{0.13}{-1.1}\dl{-0.05}{-1}{0.13}{-1.1}\dl{-0.01}{-1}{0.13}{-1.1}\dl{0.03}{-1}{0.13}{-1.1}\dl{0.07}{-1}{0.13}{-1.1}
\dl{1.17}{7}{1.43}{6.9}\dl{1.21}{7}{1.43}{6.9}\dl{1.25}{7}{1.43}{6.9}\dl{1.29}{7}{1.43}{6.9}\dl{1.33}{7}{1.43}{6.9}\dl{1.37}{7}{1.43}{6.9}
\dl{1.17}{7}{1.43}{7.1}\dl{1.21}{7}{1.43}{7.1}\dl{1.25}{7}{1.43}{7.1}\dl{1.29}{7}{1.43}{7.1}\dl{1.33}{7}{1.43}{7.1}\dl{1.37}{7}{1.43}{7.1}
\dl{6.87}{7}{7.13}{6.9}\dl{6.91}{7}{7.13}{6.9}\dl{6.95}{7}{7.13}{6.9}\dl{6.99}{7}{7.13}{6.9}\dl{7.03}{7}{7.13}{6.9}\dl{7.07}{7}{7.13}{6.9}
\dl{6.87}{7}{7.13}{7.1}\dl{6.91}{7}{7.13}{7.1}\dl{6.95}{7}{7.13}{7.1}\dl{6.99}{7}{7.13}{7.1}\dl{7.03}{7}{7.13}{7.1}\dl{7.07}{7}{7.13}{7.1}
\dl{0}{1.63}{0.1}{1.37}\dl{0}{1.56}{0.1}{1.37}\dl{0}{1.51}{0.1}{1.37}\dl{0}{1.47}{0.1}{1.37}\dl{0}{1.45}{0.1}{1.37}\dl{0}{1.43}{0.1}{1.37}
\dl{0}{1.63}{-0.1}{1.37}\dl{0}{1.56}{-0.1}{1.37}\dl{0}{1.51}{-0.1}{1.37}\dl{0}{1.47}{-0.1}{1.37}\dl{0}{1.45}{-0.1}{1.37}\dl{0}{1.43}{-0.1}{1.37}
\dl{0}{1.43}{0}{1.63}
\dl{7}{4.37}{7.1}{4.63}\dl{7}{4.44}{7.1}{4.63}\dl{7}{4.49}{7.1}{4.63}\dl{7}{4.53}{7.1}{4.63}\dl{7}{4.55}{7.1}{4.63}\dl{7}{4.57}{7.1}{4.63}
\dl{7}{4.37}{6.9}{4.63}\dl{7}{4.44}{6.9}{4.63}\dl{7}{4.49}{6.9}{4.63}\dl{7}{4.53}{6.9}{4.63}\dl{7}{4.55}{6.9}{4.63}\dl{7}{4.57}{6.9}{4.63}
\dl{0}{4.37}{0.1}{4.63}\dl{0}{4.44}{0.1}{4.63}\dl{0}{4.49}{0.1}{4.63}\dl{0}{4.53}{0.1}{4.63}\dl{0}{4.55}{0.1}{4.63}\dl{0}{4.57}{0.1}{4.63}
\dl{0}{4.37}{-0.1}{4.63}\dl{0}{4.44}{-0.1}{4.63}\dl{0}{4.49}{-0.1}{4.63}\dl{0}{4.53}{-0.1}{4.63}\dl{0}{4.55}{-0.1}{4.63}\dl{0}{4.57}{-0.1}{4.63}
\dl{-7}{4.37}{-7.1}{4.63}\dl{-7}{4.44}{-7.1}{4.63}\dl{-7}{4.49}{-7.1}{4.63}\dl{-7}{4.53}{-7.1}{4.63}\dl{-7}{4.55}{-7.1}{4.63}\dl{-7}{4.57}{-7.1}{4.63}
\dl{-7}{4.37}{-6.9}{4.63}\dl{-7}{4.44}{-6.9}{4.63}\dl{-7}{4.49}{-6.9}{4.63}\dl{-7}{4.53}{-6.9}{4.63}\dl{-7}{4.55}{-6.9}{4.63}\dl{-7}{4.57}{-6.9}{4.63}
\dl{-4.4}{4.63}{-4.3}{4.37}\dl{-4.4}{4.56}{-4.3}{4.37}\dl{-4.4}{4.51}{-4.3}{4.37}\dl{-4.4}{4.47}{-4.3}{4.37}\dl{-4.4}{4.45}{-4.3}{4.37}\dl{-4.4}{4.43}{-4.3}{4.37}
\dl{-4.4}{4.63}{-4.5}{4.37}\dl{-4.4}{4.56}{-4.5}{4.37}\dl{-4.4}{4.51}{-4.5}{4.37}\dl{-4.4}{4.47}{-4.5}{4.37}\dl{-4.4}{4.45}{-4.5}{4.37}\dl{-4.4}{4.43}{-4.5}{4.37}
\dl{-7}{2.63}{-6.9}{2.37}\dl{-7}{2.56}{-6.9}{2.37}\dl{-7}{2.51}{-6.9}{2.37}\dl{-7}{2.47}{-6.9}{2.37}\dl{-7}{2.45}{-6.9}{2.37}\dl{-7}{2.43}{-6.9}{2.37}
\dl{-7}{2.63}{-7.1}{2.37}\dl{-7}{2.56}{-7.1}{2.37}\dl{-7}{2.51}{-7.1}{2.37}\dl{-7}{2.47}{-7.1}{2.37}\dl{-7}{2.45}{-7.1}{2.37}\dl{-7}{2.43}{-7.1}{2.37}
\dl{-5.57}{5}{-5.83}{5.1}\dl{-5.61}{5}{-5.83}{5.1}\dl{-5.65}{5}{-5.83}{5.1}\dl{-5.69}{5}{-5.83}{5.1}\dl{-5.73}{5}{-5.83}{5.1}\dl{-5.77}{5}{-5.83}{5.1}
\dl{-5.57}{5}{-5.83}{4.9}\dl{-5.61}{5}{-5.83}{4.9}\dl{-5.65}{5}{-5.83}{4.9}\dl{-5.69}{5}{-5.83}{4.9}\dl{-5.73}{5}{-5.83}{4.9}\dl{-5.77}{5}{-5.83}{4.9}
\dl{-5.83}{4}{-5.57}{3.9}\dl{-5.79}{4}{-5.57}{3.9}\dl{-5.75}{4}{-5.57}{3.9}\dl{-5.71}{4}{-5.57}{3.9}\dl{-5.67}{4}{-5.57}{3.9}\dl{-5.63}{4}{-5.57}{3.9}
\dl{-5.83}{4}{-5.57}{4.1}\dl{-5.79}{4}{-5.57}{4.1}\dl{-5.75}{4}{-5.57}{4.1}\dl{-5.71}{4}{-5.57}{4.1}\dl{-5.67}{4}{-5.57}{4.1}\dl{-5.63}{4}{-5.57}{4.1}
\dl{5.57}{4}{5.83}{3.9}\dl{5.61}{4}{5.83}{3.9}\dl{5.65}{4}{5.83}{3.9}\dl{5.69}{4}{5.83}{3.9}\dl{5.73}{4}{5.83}{3.9}\dl{5.77}{4}{5.83}{3.9}
\dl{5.57}{4}{5.83}{4.1}\dl{5.61}{4}{5.83}{4.1}\dl{5.65}{4}{5.83}{4.1}\dl{5.69}{4}{5.83}{4.1}\dl{5.73}{4}{5.83}{4.1}\dl{5.77}{4}{5.83}{4.1}
\dl{8.17}{5}{8.43}{4.9}\dl{8.21}{5}{8.43}{4.9}\dl{8.25}{5}{8.43}{4.9}\dl{8.29}{5}{8.43}{4.9}\dl{8.33}{5}{8.43}{4.9}\dl{8.37}{5}{8.43}{4.9}
\dl{8.17}{5}{8.43}{5.1}\dl{8.21}{5}{8.43}{5.1}\dl{8.25}{5}{8.43}{5.1}\dl{8.29}{5}{8.43}{5.1}\dl{8.33}{5}{8.43}{5.1}\dl{8.37}{5}{8.43}{5.1}
\dl{5.83}{5}{5.57}{5.1}\dl{5.79}{5}{5.57}{5.1}\dl{5.75}{5}{5.57}{5.1}\dl{5.71}{5}{5.57}{5.1}\dl{5.67}{5}{5.57}{5.1}\dl{5.63}{5}{5.57}{5.1}
\dl{5.83}{5}{5.57}{4.9}\dl{5.79}{5}{5.57}{4.9}\dl{5.75}{5}{5.57}{4.9}\dl{5.71}{5}{5.57}{4.9}\dl{5.67}{5}{5.57}{4.9}\dl{5.63}{5}{5.57}{4.9}
\dl{-8.2}{4.08}{-8.46}{4}\dl{-8.26}{4.04}{-8.46}{4}\dl{-8.3}{4.02}{-8.46}{4}\dl{-8.32}{4}{-8.46}{4}\dl{-8.34}{3.98}{-8.46}{4}\dl{-8.36}{3.96}{-8.46}{4}
\dl{-8.2}{4.08}{-8.34}{3.86}\dl{-8.26}{4.04}{-8.34}{3.86}\dl{-8.3}{4.02}{-8.34}{3.86}\dl{-8.32}{4}{-8.34}{3.86}\dl{-8.34}{3.98}{-8.34}{3.86}\dl{-8.36}{3.96}{-8.34}{3.86}
\dl{-8.2}{1.92}{-8.46}{2}\dl{-8.26}{1.96}{-8.46}{2}\dl{-8.3}{1.98}{-8.46}{2}\dl{-8.32}{2}{-8.46}{2}\dl{-8.34}{2.02}{-8.46}{2}\dl{-8.36}{2.04}{-8.46}{2}
\dl{-8.2}{1.92}{-8.34}{2.14}\dl{-8.26}{1.96}{-8.34}{2.14}\dl{-8.3}{1.98}{-8.34}{2.14}\dl{-8.32}{2}{-8.34}{2.14}\dl{-8.34}{2.02}{-8.34}{2.14}\dl{-8.36}{2.04}{-8.34}{2.14}
\dl{-8.18}{3.54}{-8.43}{3.53}\dl{-8.24}{3.52}{-8.43}{3.53}\dl{-8.28}{3.5}{-8.43}{3.53}\dl{-8.31}{3.49}{-8.43}{3.53}\dl{-8.34}{3.48}{-8.43}{3.53}\dl{-8.37}{3.47}{-8.43}{3.53}
\dl{-8.18}{3.54}{-8.37}{3.38}\dl{-8.24}{3.52}{-8.37}{3.38}\dl{-8.28}{3.5}{-8.37}{3.38}\dl{-8.31}{3.49}{-8.37}{3.38}\dl{-8.34}{3.48}{-8.37}{3.38}\dl{-8.37}{3.47}{-8.37}{3.38}
\dl{-5.82}{4.46}{-5.57}{4.47}\dl{-5.76}{4.48}{-5.57}{4.47}\dl{-5.72}{4.5}{-5.57}{4.47}\dl{-5.69}{4.51}{-5.57}{4.47}\dl{-5.66}{4.52}{-5.57}{4.47}\dl{-5.63}{4.53}{-5.57}{4.47}
\dl{-5.82}{4.46}{-5.63}{4.62}\dl{-5.76}{4.48}{-5.63}{4.62}\dl{-5.72}{4.5}{-5.63}{4.62}\dl{-5.69}{4.51}{-5.63}{4.62}\dl{-5.66}{4.52}{-5.63}{4.62}\dl{-5.63}{4.53}{-5.63}{4.62}
\dl{5.82}{4.46}{5.57}{4.47}\dl{5.76}{4.48}{5.57}{4.47}\dl{5.72}{4.5}{5.57}{4.47}\dl{5.69}{4.51}{5.57}{4.47}\dl{5.66}{4.52}{5.57}{4.47}\dl{5.63}{4.53}{5.57}{4.47}
\dl{5.82}{4.46}{5.63}{4.62}\dl{5.76}{4.48}{5.63}{4.62}\dl{5.72}{4.5}{5.63}{4.62}\dl{5.69}{4.51}{5.63}{4.62}\dl{5.66}{4.52}{5.63}{4.62}\dl{5.63}{4.53}{5.63}{4.62}
\dl{8.42}{4.46}{8.17}{4.47}\dl{8.36}{4.48}{8.17}{4.47}\dl{8.32}{4.5}{8.17}{4.47}\dl{8.29}{4.51}{8.17}{4.47}\dl{8.26}{4.52}{8.17}{4.47}\dl{8.23}{4.53}{8.17}{4.47}
\dl{8.42}{4.46}{8.23}{4.62}\dl{8.36}{4.48}{8.23}{4.62}\dl{8.32}{4.5}{8.23}{4.62}\dl{8.29}{4.51}{8.23}{4.62}\dl{8.26}{4.52}{8.23}{4.62}\dl{8.23}{4.53}{8.23}{4.62}
\dl{-5.62}{2.41}{-5.72}{2.64}\dl{-5.67}{2.47}{-5.72}{2.64}\dl{-5.7}{2.51}{-5.72}{2.64}\dl{-5.73}{2.54}{-5.72}{2.64}\dl{-5.75}{2.56}{-5.72}{2.64}
\dl{-5.62}{2.41}{-5.83}{2.53}\dl{-5.67}{2.47}{-5.83}{2.53}\dl{-5.7}{2.51}{-5.83}{2.53}\dl{-5.73}{2.54}{-5.83}{2.53}\dl{-5.75}{2.56}{-5.83}{2.53}
\dl{-3.2}{2.63}{-3}{2.81}\dl{-3.14}{2.65}{-3}{2.81}\dl{-3.09}{2.67}{-3}{2.81}\dl{-3.06}{2.69}{-3}{2.81}\dl{-3.04}{2.7}{-3}{2.81}
\dl{-3.02}{2.71}{-3}{2.81}
\dl{-3.2}{2.63}{-2.94}{2.65}\dl{-3.14}{2.65}{-2.94}{2.65}\dl{-3.09}{2.67}{-2.94}{2.65}\dl{-3.06}{2.69}{-2.94}{2.65}\dl{-3.04}{2.7}{-2.94}{2.65}\dl{-3.02}{2.71}{-2.94}{2.65}
\dl{-8.2}{6.81}{-7.95}{6.82}\dl{-8.14}{6.83}{-7.95}{6.82}\dl{-8.1}{6.85}{-7.95}{6.82}\dl{-8.07}{6.86}{-7.95}{6.82}\dl{-8.04}{6.87}{-7.95}{6.82}\dl{-8.01}{6.88}{-7.95}{6.82}
\dl{-8.2}{6.81}{-8.01}{6.97}\dl{-8.14}{6.83}{-8.01}{6.97}\dl{-8.1}{6.85}{-8.01}{6.97}\dl{-8.07}{6.86}{-8.01}{6.97}\dl{-8.04}{6.87}{-8.01}{6.97}\dl{-8.01}{6.88}{-8.01}{6.97}
%caption
\ptlu{0}{-3.5}{\textup{
\small
\begin{tabular}{c}
Figure 3. The decomposition and colouring of $T$. The dotted line\\ 
represents the $p-q$ geodesic $P$. Letters that label vertices are circled. 
\end{tabular}
}
}
\]\\[1ex]
\indent Now, we prove that $c$ is a rainbow vertex-connected colouring for $T$, which implies the theorem. Let $x,y\in V(T)$. We show that there always exists a rainbow $x-y$ path. This is true if $x=a$. Let $x\not\in V_1\cup\{a\}$. If $y=a$, then $xy\in A(T)$. Otherwise, $y\in V_j$ for some $1\le j\le d$. Let $Q$ be an $a-y$ geodesic, which contains one vertex in each of $V_1,\dots,V_j$. If $x\in V(Q)$, then $Q$ contains a rainbow $x-y$ path. Otherwise, $x\not\in V(Q)$, and $xa\cup Q$ is a desired path.

Hence, suppose that $x\in V_1$. Note that by the choice of $p$, if $x\neq p$, then either $xp\in A(T)$, or there exists $w\in V_1$ such that $xwp$ is a path. Indeed, if $xp\not\in A(T)$ and no such $w$ exists in $V_1$, then in $T[V_1]$, the set of in-neighbours of $x$ contains $p$, and all in-neighbours of $p$. Thus, $x$ contradicts the choice of $p$. Hence in $T[V_1]$, there is an $x-p$ path $P'$ of length at most $2$. Now, consider $y\not\in V_{d-1}\cup V_d$. If $y\in V(P'\cup P)$, then $P'\cup P$ contains a rainbow $x-y$ path. Otherwise, $P'\cup P\cup qy$ is a desired path. Next, if $y\in V_d$, then by a similar argument as for $T[V_1]$, we have a $q-y$ path $P''$ of length at most $2$ in $T[V_d]$, by the choice of $q$. We have $P'\cup P\cup P''$ is a desired path. Finally, let $y\in V_{d-1}$. If $ry\in A(T)$ in (i) or (ii), or $sy\in A(T)$ in (ii), then $P'\cup P\cup ry$ or $P'\cup P\cup sy$ contains a rainbow $x-y$ path. Otherwise, we can find an in-neighbour of $y$ in $V_{d-2}$, say $z$. Note that $z$ exists by considering an $a-y$ geodesic, and that $z\neq r$ in (i), or $z\not\in\{s,r\}$ in (ii). Then, $P'\cup P\cup qzy$ is a desired path.
\end{proof}

\section*{Acknowledgements}
Hui Lei and Yongtang Shi are partially supported by National Natural Science Foundation of China and Natural Science Foundation of Tianjin (No.~17JCQNJC00300). Shasha Li is partially supported by National Natural Science Foundation of China (No.~11301480), and Natural Science Foundation of Ningbo, China. Henry Liu is supported by International  Interchange Plan of CSU, and China Postdoctoral Science Foundation (Nos.~2015M580695, 2016T90756). Henry Liu also thank Chern Institute of Mathematics, Nankai University, for their generous hospitality. He was able to carry out part of this research during his visit there.

The authors thank the anonymous referees for the careful reading of the manuscript.

\end{document}